\theoremstyle{plain}
\newtheorem{theo}{Theorem}
\newtheorem{lemma}{Lemma}[section]
\newtheorem{prop}[lemma]{Proposition}
\newtheorem{cor}[lemma]{Corollary}
\newtheorem{ass}{Assumption}
\theoremstyle{definition}
\newtheorem{definition}[lemma]{Definition}
\newtheorem{exam}[lemma]{Example}
\theoremstyle{remark}
\newtheorem{rem}{Remark}[section]
\numberwithin{equation}{section}
\newcommand{\C}{\mathbb{C}}
\newcommand{\R}{\mathbb{R}}
\newcommand{\Z}{\mathbb{Z}}
\newcommand{\D}{\mathcal{D}}
\newcommand{\N}{\mathbb{N}}
\newcommand{\eps}{\varepsilon}
\newcommand{\mc}{\mathcal}
\newcommand{\rr}{\mathbb{R}}
\newcommand{\nn}{\mathbb{N}}
\newcommand{\cc}{\mathbb{C}}
\newcommand{\hh}{\mathbb{H}}
\newcommand{\zz}{\mathbb{Z}}
\newcommand{\norm}[1]{\left\Vert#1\right\Vert}
\newcommand{\la}{\lambda}
\newcommand{\pl}{\partial}
\newcommand{\x}{\times}
\newcommand{\til}{\widetilde}
\newcommand{\bbar}{\overline}
\newcommand{\cjd}{\rangle}
\newcommand{\cjg}{\langle}
\newcommand{\demi}{\tfrac{1}{2}}
\DeclareMathOperator{\Res}{Res}
\let\Im=\Imag
\DeclareMathOperator{\Op}{Op}
\let\Re=\Real
\DeclareMathOperator{\supp}{supp}
\DeclareMathOperator{\WF}{WF}
  \def\Ddots{\mathinner{\mkern1mu\raise\p@
\vbox{\kern7\p@\hbox{.}}\mkern2mu
\raise4\p@\hbox{.}\mkern2mu\raise7\p@\hbox{.}\mkern1mu}}
\newcommand{\eklm}[1]{\left\langle #1 \right\rangle}
\renewcommand{\d}{\,d}
\newcommand{\A}{{\mathcal A}}
\newcommand{\E}{{\mathcal E}}
\renewcommand{\H}{{\mathcal H}}
\newcommand{\I}{{\mathcal I}}
\newcommand{\M}{{\mathcal M}}
\newcommand{\T}{{\rm T}}
\newcommand{\Xbf}{{\mathbf X}}
\renewcommand{\epsilon}{\vararepsilon}
\newcommand{\bdm}{\begin{displaymath}}
\newcommand{\edm}{\end{displaymath}}
\newcommand{\bq}{\begin{equation}}
\newcommand{\eq}{\end{equation}}
\newcommand{\bqn}{\begin{equation*}}
\newcommand{\eqn}{\end{equation*}}
\newcommand{\Cinft}{{C^{\infty}}}
\newcommand{\CT}{{C^{\infty}_c}}
\renewcommand{\L}{{\rm L}}
\renewcommand{\S}{{\mathcal S}}
\newcommand{\SO}{\mathrm{SO}}
\newcommand{\SU}{\mathrm{SU}}
\newcommand{\g}{{\bf \mathfrak g}}
\newcommand{\aL}{{\bf \mathfrak a}}
\newcommand{\nL}{{\bf \mathfrak n}}
\renewcommand{\k}{{\bf \mathfrak k}}
\newcommand{\m}{{\bf \mathfrak m}}
\newcommand{\p}{{\bf \mathfrak p}}
\newcommand{\U}{{\mathfrak U}}
\newcommand{\so}{{\bf \mathfrak so}}
\newcommand{\Ad}{\mathrm{Ad}}
\renewcommand{\Im}{\mathrm{Im}\,}
\renewcommand{\Re}{\mathrm{Re}\,}
\newcommand{\gam}{\Gamma\backslash}
\definecolor{darkblue}{rgb}{0,0,0.4}
\newcommand{\tu}[1]{\textup{#1}}
\colorlet{c1}{blue!90!black}
\definecolor{mint}{rgb}{0.24, 0.71, 0.54}
\colorlet{c2}{orange!70!black}
\colorlet{c3}{red!100!black}
\colorlet{c4}{yellow!40!orange}
\def\C{\mathbb{C}} 
\def\R{\mathbb{R}} 
\def\Z{\mathbb{Z}} 
\def\N{\mathbb{N}} 
\def\H{\mathbb{H}} 
\def\SOdeux{\textrm{SO}(2)}
\def\g{\mathfrak{g}}
\def\End{\mbox{End}}
\def\E{\mathcal{E}}
\def\L{\mathcal{L}}
\def\U{\mathcal{U}}
\def\Um{\mathcal{U}_-}
\def\demi{\tfrac{1}{2}}
\def\mA{\mathcal{A}}
\def\sA{\mathscr{A}}
\def\S{\mathcal{S}}
\def\T{\mathcal{T}}
\author[C. Guillarmou]{Colin Guillarmou}
\email[Corresponding author]{colin.guillarmou@math.u-psud.fr}
\address{Universit\'e Paris-Saclay, CNRS,  Laboratoire de math\'ematiques d'Orsay, 91405, Orsay, France.}
\author[B. K\"uster]{Benjamin K\"uster}
\email{bkuester@math.upb.de}
\address{Institut f\"ur Mathematik, Universit\"at Paderborn, 33098 Paderborn, Germany.\bigskip}
\let\@wraptoccontribs\wraptoccontribs
\email{charles.hadfield@ibm.com}
\address{IBM T.J. Watson Research Center, Yorktown Heights, NY 10598, United States.}
\title{Spectral theory of the frame flow on hyperbolic 3-manifolds}
\begin{document}

\begin{abstract}
We study the spectral theory and the resolvent of the vector field generating the frame flow of closed hyperbolic $3$-dimensional manifolds on some family of anisotropic Sobolev spaces. We show the existence of a spectral gap and prove resolvent estimates using semiclassical methods. 
\end{abstract}

\maketitle

\section{Introduction}

In the last twenty years, there has been developed a new spectral approach to studying hyperbolic dynamics
via transfer operators acting on appropriate anisotropic Sobolev spaces on which the transfer operators (for diffeomorphisms) or their generators (for flows) have discrete spectrum, see \cite{GL06,BT07,BL07,FRS08,FS11,DZ16,DG16,BW17}. 
In particular, the use of microlocal and harmonic analysis in the spirit of quantum scattering theory proved to be efficient for describing long time dynamics. For example, exponential mixing for hyperbolic flows is equivalent to the existence of a spectral gap together with polynomial bounds on the resolvent of the generator. Such gaps have been obtained for contact Anosov flows \cite{Do98, Li04,Ts12,NZ15,FT17a}, Axiom A flows \cite{Na05,St11}, geometrically finite hyperbolic manifolds \cite{Edwards-Oh}, or Sinai Billiards \cite{BDL18}. In dimension $3$, exponential mixing is known for all topologically mixing Anosov flows by a recent result of Tsujii-Zhang \cite{tsujiizhang20} (see also Tsujii \cite{Tsujii18} for a generic result).
For partially hyperbolic flows, much less is known and a natural geometric example is given by the frame flow, defined as follows: let $({\bf M},g)$ be an $n$-dimensional oriented Riemannian manifold and let $F{\bf M}$ be the principal bundle over ${\bf M}$ formed by the oriented orthonormal frames $e=(e_1,\dots,e_n)$ of the tangent spaces. Then we define the frame flow to be 
\[ \til{\varphi}_t: F{\bf M}\to F{\bf M}, \quad \til{\varphi}_t(x,e)=(\pi(\varphi_t(x,e_1)),e(t)),\]
where $x\in {\bf M}$, $\varphi_t:S{\bf M}\to S{\bf M}$ is the geodesic flow on the unit tangent bundle, $\pi:S{\bf M}\to {\bf M}$ the projection onto the base, and $e(t)$ the frame obtained by parallel transport along the geodesic $\gamma(s):=\pi(\varphi_s(x,e_1))\in {\bf M}$ for $s \in [0,t]$. This is an extension of the geodesic flow $\varphi_t$ since, if $\til{\pi}:F{\bf M}\to S{\bf M}$ is the projection defined by $\til{\pi}(x,e):= (x,e_1)$, one has 
\[\til{\pi}(\til{\varphi}_t(x,e))=\varphi_t(x,e_1)=\varphi_t(\til{\pi}(x,e)).\] 
If $({\bf M},g)$ has negative curvature, then it is a classical result \cite{Brin-Pesin} that the flow $\til{\varphi}_t$ is partially hyperbolic: one has a flow-invariant decomposition 
\begin{equation}\label{stable/unstable/frame} 
T(F{\bf M})=\til{E}_0\oplus \til{E}_s\oplus \til{E}_u,
\end{equation}
where $d\til{\varphi}_t$ is contracting on $\til{E}_s$ (resp.\ $\til{E}_u$) in positive (resp.\ negative) time and $\til{E}_0=\R\til{X}\oplus V$ with $V=\ker d\til{\pi}$, where $\til{X}$ is the vector field  generating the flow $\til{\varphi}_t$.

If $\dim {\bf M}>2$, the dynamical behavior of the frame flow is qualitatively different from that of the geodesic flow because the latter is an Anosov flow but the former is not: besides the flow direction $\til{X}$, the frame flow possesses additional neutral directions described by the non-zero subbundle $V$.

In this paper, we focus on the case $\dim {\bf M}=3$. Then $\til{\pi}: F{\bf M}\to S{\bf M}$ is a principal $\SO(2)$-bundle and there is precisely one additional neutral direction besides  $\til{X}$.

For an Anosov flow $\varphi_t$ generated by a smooth vector field $X$ on a compact manifold, it is known \cite{BL07,FS11,DZ16} that for each $N\geq 0$ there are anisotropic Sobolev spaces $\mc{H}_N$ such that 
the linear operator $-X$ has discrete spectrum in the half-plane $\{\la\in \C\;|\; {\rm Re}(\la)>-N\}$. Moreover, for $N\not=N'$ the spectrum and the (generalized) eigenfunctions of $-X$ 
in $\mc{H}_N$ and in $\mc{H}_{N'}$ coincide in the region  ${\rm Re}(\la)>-\min(N,N')$. This intrinsic spectrum, whose elements are called Pollicott-Ruelle resonances, is exactly the set of the poles of the meromorphic continuation of the resolvent $R_{X}(\la):=(-X-\la)^{-1}$, originally defined in ${\rm Re}(\la)>0$ by the convergent expression
\[R_{X}(\la): C^\infty(S{\bf M})\to L^\infty(S{\bf M}), 
\quad R_X(\la)f=-\int_{0}^\infty e^{-\lambda t}\varphi^*_{-t}f \, dt,\] 
to the whole complex plane $\C$. Here the extended operator is viewed as a 
continuous map $R_X(\la):C^\infty(S{\bf M})\to \D'(S{\bf M})$, where $\mc{D}'$ denotes the space of distributions. The works \cite{Do98, Li04,Ts12,NZ15,FT17a} mentioned above show that if the flow is a \emph{contact} Anosov flow, then there is a half-plane $\{{\rm Re}(\la)>-\eps\}$ containing no elements in the spectrum except $\la=0$ and the flow exhibits exponential decay of correlations on smooth observables. For the geodesic flow on a compact hyperbolic manifold ${\bf M}$ (i.e., with constant curvature $-1$), one can fully describe the Pollicott-Ruelle resonance spectrum: it is given in terms of eigenvalues of Laplacians on symmetric tensors on ${\bf M}$ and there are only finitely many Pollicott-Ruelle resonances in ${\rm Re}(\la)>-n/2$, where $n=\dim {\bf M}-1$, see \cite{dfg}.\\

The goal of our work is to describe, in the same spirit, the spectral theory of the generator $\til{X}$ of\footnote{We use the notation $\til{X}$ for the frame flow generator exclusively in the introduction.}   the frame flow $\til{\varphi}_t:F{\bf M}\to F{\bf M}$ in the case where ${\bf M}=\Gamma\backslash \H^3$ is an oriented closed hyperbolic manifold (here $\Gamma\subset \textrm{PSO}(1,3)$ is a co-compact subgroup). In that case, the frame bundle 
$F{\bf M}$ can be written as $F{\bf M}=\Gamma\backslash G$ where $G:=\textrm{PSO}(1,3)$, it then inherits a natural measure $\mu_G$ induced by the Haar measure on the Lie group $G$, and $\til{X}$ preserves the measure in the sense that $\mc{L}_{\til{X}}\mu_G=0$.
For $\lambda\in \C$ with $\Re \lambda>0$, the operator $-\til{X}$ on $F{\bf M}$ has a well-defined resolvent $R_{\til{X}}(\lambda):=(-\til{X}-\lambda)^{-1}$ defined by
\[
R_{\til{X}}(\lambda)f:=-\int_{0}^\infty e^{-\lambda t}\til{\varphi}^*_{-t}f \, dt,\qquad f\in C^\infty(F{\bf M})
\]
and this operator extends to a bounded operator on $L^2(F{\bf M},\mu_G)$.

Our main result, stated in a slightly more detailed form as Theorem \ref{prop:gap} in Section \ref{sec:resolvestim}, is:
\begin{theo}\label{th1intro} Let ${\bf M}$ be a compact oriented hyperbolic $3$-manifold. Then, there are Hilbert spaces $\mathcal H^{1,1}$, $\mathcal H^{1,0}$ 
with continuous inclusions $\Cinft(F{\bf M})\subset\mathcal H^{1,1}\subset\mathcal H^{1,0}\subset \D'(F{\bf M})$  such that the frame flow resolvent $R_{\til{X}}(\lambda)$ extends to the region
$\{\Re\lambda >-1\}\subset \C$
as a meromorphic family of bounded  operators $R_{\til{X}}(\lambda): \mathcal H^{1,1}\to \mathcal H^{1,0}$, and the only poles of $R(\lambda)$ in that region are given by the real numbers $\lambda_j:=  \sqrt{1-\nu_j}-1$, $0\leq j\leq J$, where $\nu_0=0,\nu_1,\ldots,\nu_J$ are the eigenvalues of the Laplace-Beltrami operator $\Delta$ on ${\bf M}$  in the interval $[0,1)$. Moreover, for every $\delta,r>0$ there is a constant $C_{\delta,r}>0$ such that for  $1<|{\rm Im}(\lambda)|$ and $-1+\delta<\Re \lambda <r$, one has the following resolvent estimate:
\bqn
\norm{R_{\til{X}}(\lambda)}_{\mathcal H^{1,1}\to \mathcal H^{1,0}}\leq C_{\delta,r}\eklm{\lambda}^{3}.
\eqn
\end{theo}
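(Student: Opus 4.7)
The central idea is to exploit the homogeneous structure $F\mathbf{M}=\Gamma\backslash G$ with $G=\mathrm{PSO}(1,3)$ together with the right action of $M=\mathrm{SO}(2)$ by rotating the last two frame vectors, which commutes with the frame flow. This reduces the partially hyperbolic problem to a $\Z$-indexed family of honestly Anosov problems on line bundles over $S\mathbf{M}=\Gamma\backslash G/M$. Concretely, I would Fourier-decompose
\[
L^2(F\mathbf{M})=\bigoplus_{k\in\Z}L^2_k(F\mathbf{M}),
\]
where $L^2_k$ consists of functions transforming by the character $\chi_k$ of $M$. Each $L^2_k$ is identified with $L^2$-sections of a complex line bundle $L_k\to S\mathbf{M}$ associated to the principal $M$-bundle $F\mathbf{M}\to S\mathbf{M}$. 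Since $[\tilde X,V]=0$ for $V$ tangent to the $M$-orbits, $\tilde X$ preserves this decomposition and restricts to a first-order operator $X_k$ on $C^\infty(S\mathbf{M},L_k)$ that generates the geodesic flow lifted by parallel transport in $L_k$.

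Next, since the geodesic flow is contact Anosov and $L_k$ is a flat bundle over $S\mathbf{M}$, the general construction of anisotropic Sobolev spaces (Faure--Sj{\"o}strand, Dyatlov--Zworski, Dyatlov--Guillarmou) applies mode by mode and yields spaces $\mathcal H_k^{1,1}\subset\mathcal H_k^{1,0}$ on which $R_{X_k}(\lambda)=(-X_k-\lambda)^{-1}$ extends meromorphically to $\{\Re\lambda>-1\}$ with discrete spectrum. I would then identify the resonances in this strip using representation theory of $G$. The regular representation of $G$ on $L^2(\Gamma\backslash G)$ splits into a tempered (principal series) part and a non-tempered (complementary series) part, and for $\mathrm{PSO}(1,3)$ complementary series occur only at the trivial $M$-type. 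For $k=0$ the bundle $L_0$ is trivial and the results of \cite{dfg} give resonances precisely at $\lambda_j=\sqrt{1-\nu_j}-1$, while for $k\neq 0$ the only spectrum is tempered and located on $\{\Re\lambda=0\}$, so there are no poles in the open strip $\Re\lambda>-1$. This proves the claimed pole structure.

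The last step is to assemble the mode-wise theory into global Hilbert spaces $\mathcal H^{1,1}\subset\mathcal H^{1,0}\subset\D'(F\mathbf{M})$: I would define them as $\ell^2$-direct sums of the $\mathcal H_k^{1,i}$ with a weight in $k$ chosen to dominate the vertical (fibre) order needed to interpret $\tilde X$ distributionally, and then set $R_{\tilde X}(\lambda)=\bigoplus_k R_{X_k}(\lambda)$. The polynomial resolvent bound for $|\Im\lambda|\geq 1$ follows from semiclassical propagation estimates (radial source/sink estimates at the (un)stable conormal bundles in $T^*(S\mathbf{M})$) applied to each $X_k$: with $h=\langle\lambda\rangle^{-1}$, each crossing of a radial set produces a loss of one power of $\langle\lambda\rangle$, and the overall bound $\langle\lambda\rangle^3$ accounts for the two radial sets plus the order one of $X_k$. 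The $h$-pseudodifferential calculus of Faure--Sj{\"o}strand on the line bundles $L_k$ provides the needed framework.

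\textbf{Main obstacle.} The hard part is uniformity in the Fourier mode $k$. Both the escape functions used to build $\mathcal H_k^{1,i}$ and the semiclassical propagation constants depend on $k$, and one needs them uniform so that the $\ell^2$-direct sum is bounded and no resonances accumulate in $\Re\lambda>-1$ as $|k|\to\infty$. A clean way to handle this is to work intrinsically with the $h$-pseudodifferential calculus on $F\mathbf{M}$ that treats $k$ (i.e.\ the vertical derivative along $V$) as a second semiclassical parameter, so that escape functions built from the stable/unstable distributions of the geodesic flow can be constructed jointly in $(k,\lambda)$. Proving that this joint calculus respects the Fourier decomposition, and that the radial estimates hold with constants independent of $k$, is expected to be the main technical effort; the representation-theoretic exclusion of resonances for $k\neq 0$ is the other delicate ingredient, and is plausibly what the appendix by Hadfield addresses.
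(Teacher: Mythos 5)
Your overall architecture matches the paper: Fourier decompose $L^2(F\mathbf{M})$ along the $\SO(2)$-fibers into $\Z$-indexed line bundles $\L^k$ over $S\mathbf{M}$, build $k$-dependent anisotropic Sobolev spaces, and treat $(k,\lambda)$ jointly as a doubly semiclassical problem. You also correctly identify the main obstacle (uniformity in $k$) and the idea of a semiclassical calculus treating the vertical frequency as a second parameter, which is exactly what the paper develops via Charles's line-bundle quantization.

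However, there is a substantive error that, if left in place, would destroy the argument precisely at the point you flag as the ``main technical effort''. You assert that $\L^k$ is a \emph{flat} bundle over $S\mathbf{M}$. It is not: the natural connection induced from the principal $M$-bundle $F\mathbf{M}\to S\mathbf{M}$ has non-vanishing curvature $\Omega_\nabla$; one can compute $\Omega_\nabla(\mathbf{U}_1^\pm,\mathbf{U}_2^\mp)=in_0/2$ on $\L^{n_0}$. What \emph{is} true, and what makes the Faure--Sj\"ostrand Fredholm theory transfer, is the weaker property $\iota_X\Omega_\nabla=0$ (the curvature is invariant along the flow). More importantly, the non-flatness is not merely an obstacle to circumvent but the very mechanism that produces the uniform spectral gap: in the line-bundle calculus the relevant symplectic form on $T^*(S\mathbf{M})$ is $\omega_\rho=\omega_0+i\rho\,\pi^*\Omega_\nabla$ with $\rho=hn(h)$, and the extra term $-\tfrac{n_0\rho}{2}$ it contributes to the Poisson brackets $\{\varphi_1^\pm,\varphi_2^\mp\}_{\omega_\rho}$ is exactly what makes the trapped set $K=E_0^*$ a \emph{symplectic} submanifold near the zero section. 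Without it (i.e.\ if $\L^k$ were flat), the transversality of the unstable Hamiltonian fields $H_{\varphi_j^-}^{\omega_\rho}$ to $TK$ would fail at $K_0$, and the semiclassical-measure contradiction argument that yields $\nu\le -1$ — hence the gap — would collapse precisely in the large-$k$, bounded-$\Im\lambda$ regime where the partial hyperbolicity is felt. Your proposal never identifies this symplectic structure of the trapped set, which is the heart of the uniform gap.

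Two further remarks. First, your exponent $\langle\lambda\rangle^3$ comes from a heuristic power count at radial sets; the paper instead obtains $\langle\lambda\rangle^{2N+1}$ (with $N=1$) by a contradiction argument with semiclassical defect measures in the spirit of Dyatlov, and there is no per-radial-set accounting. Second, your route to excluding poles for $k\ne 0$ in $\{\Re\lambda>-1\}$ via the absence of complementary series at non-trivial $M$-types is genuinely different from the paper's: the appendix uses ladder/horocyclic operators and the quantum-classical correspondence of \cite{dfg} to reduce each band of resonances on $\L^k$ to eigenvalues of the Bochner Laplacian on trace-free symmetric tensors, and then invokes the lower bound $\operatorname{Sp}(\Delta_n)\ge n+1$ for $n\ge 1$. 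Your representation-theoretic argument is plausible for the constant-curvature case and would be a legitimate alternative, but it would have to be carried out carefully (Pollicott--Ruelle resonances live on anisotropic spaces, not on $L^2$, and the link to the unitary dual must be justified, e.g.\ via the same quantum-classical correspondence), and it sacrifices the analytic flexibility that the paper is explicitly aiming for.
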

Here we use the notation $\eklm{\lambda}:=\sqrt{1+|\lambda|^2}$. This result shows that the frame flow has a spectral gap and that this gap is of size $1$ away from the real axis, like for the geodesic flow. The spaces $\mc{H}^{1,1}$ and $\mc{H}^{1,0}$ are anisotropic spaces that are related by $\mc{H}^{1,0}=\mc{H}^{1,1}+R\mc{H}^{1,1}$ where $R$ is a non-vanishing vector field tangent to the vertical space $V$ (i.e., the $\SO(2)$ fibers) of the fibration $F{\bf M}\to S{\bf M}$; the norms on $\mc{H}^{1,1}$ and $\mc{H}^{1,0}$ are related by a Fourier decomposition in the $\SO(2)$ fibers, see Section \ref{sec:hilbertconstr}. 
They correspond to distributions in some negative Sobolev space $H^{-k}(F{\bf M})$ for some $k$  but with extra regularity in the unstable directions.
We show in Section \ref{sec:mix} that Theorem \ref{th1intro} implies that for 
$f\in \mc{H}^{1,1}$ such that $\til{X}^kf\in \mc{H}^{1,1}$ for all $k\leq 5$, 
and for $f'\in (\mc{H}^{1,0})'$ (where $(\mc{H}^{1,0})'$ is the dual to $\mc{H}^{1,0}$),   we get for all $\beta\in (0,1)$ and $t\geq 0$ 
\[ \cjg e^{-t\til{X}}f,f'\cjd
=\sum_{j=1}^J e^{t\la_j}\cjg\Pi_jf_0,f_0'\cjd+\mc{O}(e^{-t\beta})\|(-\til{X}+1)^5f\|_{\mc{H}^{1,1}}\|f'\|_{\mc{H}^{-1,0}},\]
where the constant in $\mc{O}$ depends only on $\beta$. Here the $\Pi_j$ are the spectral projectors of $-\til{X}$ at $\la_j$ on the space $\mc{H}^{1,1}$; as mentioned in Theorem \ref{th1intro}, the elements in ${\rm Ran}(\Pi_j)$, called Pollicott-Ruelle resonant states, are in turn related to the eigenfunctions of the Laplacian $\Delta$ at $\nu_j$ by pushforward along the projection onto the base $F{\bf M}\to {\bf M}$, see \cite{dfg}.

Notice that this implies an exponential decay of correlations for the geodesic flow by taking $f_0,f_0'\in \ker R$.\\

The mixing of the frame flow for compact hyperbolic manifolds follows from Howe-Moore \cite{Howe-Moore79}, and the exponential mixing is a consequence of the work of Moore \cite{Moore87} on the decay of matrix coefficients for rank-one symmetric spaces. Both results mainly use tools of representation theory. In contrast, we use here purely analytic and semiclassical methods, with hope that they could be extended to variable curvature settings. Using the Fourier decomposition in the $\SO(2)$ fibers of $F{\bf M}$, we approach the problem by introducing a semiclassical family of operators ${\bf X}_n$ on powers $\mc{L}^n$ of a complex line bundle $\mc{L}$ over $S{\bf M}$, in a way similar to  geometric quantization. This approach was suggested to us by F.\ Faure and has been successfully applied before in the works of Faure \cite{faure2011} and Faure-Tsujii \cite{Faure-TsujiiAST}. Similar techniques have been used by Arnoldi \cite{arnoldi12} for the non-abelian group $\SU(2)$ instead of $\SO(2)$ and by Naud \cite{Naud2019} for circle extensions of Anosov diffeomorphisms. 
We prove a polynomial upper bound on the norm of the resolvent $(-{\bf X}_n-\la)^{-1}$ on $n$-depending anisotropic Sobolev spaces for the family of operators ${\bf X}_n$ in the region ${\rm Re}(\la)>-1+\delta$, 
and these bounds are uniform with respect to $(n,|\la|)$.
The proof is done using semiclassical measures, inspired by the paper of Dyatlov \cite{Dyatlov16} in the setting of operators with normally hyperbolic trapping. The new difficulty, as compared to the case of a contact Anosov flow, is the uniform treatment with respect to the Fourier parameter $n$. This can be regarded as a kind of doubly semi-classical problem and
the key mechanism for the proof is the symplectic structure of the trapped set associated with $-{\bf X}_n-\la$ in $T^*(S{\bf M})$. In that respect our proof shares similarities with the works \cite{NZ15,Faure-TsujiiAST,FT17a}.

Considering simultaneous asymptotics of an energy parameter and a sequence of representations of a general compact Lie group $G$ has a long tradition in the context of trace formulas and quantum ergodicity for gauge fields; see, for example, the works of  Cahn-Taylor \cite{cahntaylor}, Hogreve-Potthoff-Schrader \cite{hogreve83}, Schrader-Taylor \cite{schradertaylor84,schradertaylor89}, Guillemin-Uribe \cite{guilleminuribe86,guilleminuribe89,guilleminuribe90}, and  Zelditch \cite{zelditch19921}. 
 
Although we do not fully prove it, our method 
should also give that, for each $N>0$, $R_{\til{X}}(\la)$ is analytic in the region 
\[ \{ \la\in \C \, |\, {\rm Im}(\la)\not=0, {\rm Re}(\la)\notin -\nn^*, {\rm Re}(\la)>-N \}\]
as a bounded operator from $\mc{H}^{1,1}_N$ to $\mc{H}^{1,0}_N$ for some  anisotropic Sobolev spaces 
$\mc{H}^{1,1}_N, \mc{H}^{1,0}_N$ similar to $\mathcal H^{1,1}$, $\mathcal H^{1,0}$ but with different scales of regularity depending only on $N$. However, we believe it is not meromorphic 
on the vertical lines ${\rm Re}(\la)\in -\nn^*$.

To study the spectrum of ${\bf X}_n$ on each of the individual line bundles $\mc{L}^n$, one can embed these line bundles into bundles of symmetric, trace-free tensors and apply the quantum-classical correspondence results of \cite{dfg}. This has been done by C.\ Hadfield whose  calculations are included in Appendix \ref{sec:appendix}. The computation of the spectra of ${\bf X}_n$ in Corollary \ref{cor:individualgap} strongly suggests that $R_{\til{X}}(\lambda)$ cannot be meromorphically extended to ${\rm Re}(\la)\in -\nn^*$.\\

We conclude this introduction by a discussion of the known properties of the frame flow in variable curvature. The ergodicity of the frame flow is known for a set of metrics with negative curvature that is open and dense in the $C^3$-topology by Brin \cite{Brin}, in all odd dimensions except $n=7$ by Brin-Gromov \cite{Brin-Gromov80}, in all even dimensions except $n=8$ if the curvatures are pinched enough by Brin-Karcher \cite{Brin-Karcher84}, and finally in all dimensions if the curvatures are pinched enough (very close to $1$) by Burns-Pollicott \cite{Burns-Pollicott03}. For geometrically finite hyperbolic manifolds, the mixing of the frame flow is proved by Flaminio-Spatzier \cite{Flaminio-Spatzier90} (see also Winter \cite{Winter}) and the exponential mixing is proved in the works of Mohammadi-Oh \cite{MoOh2015} (in some cases) and Sarkar-Winter \cite{SarkarWinter} (for convex co-compact cases).

\subsection*{Acknowledgements}
This project has received funding from the European Research Council (ERC) under the European Union’s Horizon 2020 research and innovation programme (grant agreement No. 725967). BK has received further funding from the Deutsche Forschungsgemeinschaft (German Research Foundation, DFG) through the Priority Programme (SPP) 2026 ``Geometry at Infinity''. Some part of this work is based on some discussion and some suggestion given to us by Fr\'ed\'eric Faure, whom we warmly thank for this, in particular for what concerns the decomposition into irreducibles and the symplectic structure of the trapped set. We thank Laurent Charles for sharing with us some results of his thesis used here and Semyon Dyatlov for pointing out to us that the method of \cite{Dyatlov16} could be used for proving spectral gaps of Anosov flows.  Last but not least, we warmly thank the two anonymous referees for their extremely careful reading of the manuscript and numerous suggestions of improvements and corrections.

\section{Setup and notation}
Throughout the paper we use the notation $\N^\ast=\{1,2,3,\ldots,\}$, $\N_0=\{0,1,2,\ldots\}$. 
\subsection{Algebraic description of the geometry of the hyperbolic space $\H^3$}\label{eq:algdescr} 
We start with the algebraic description of the hyperbolic space $\H^3$, its unit tangent bundle and its frame bundle. We will largely avoid abstract Lie theoretic terms.

Let $G:=\textrm{PSO}(1,3)$ with Lie algebra $\g=\so(1,3)$, considered here as a matrix algebra. With respect to the standard basis for $\R^{1,3}$ we obtain, as in\footnote{Our element $R$ is called $R_{2,3}$ in \cite{dfg}.} \cite{dfg}, a basis of $\g$ consisting of the elements
\begin{align*}
X&=\begin{pmatrix}
0 & 1 & 0 & 0\\
1 & 0 & 0 & 0\\
0 & 0 & 0 & 0\\
0 & 0 & 0 & 0\\
\end{pmatrix},& R&=\begin{pmatrix}
0 & 0 & 0 & 0\\
0 & 0 & 0 & 0\\
0 & 0 & 0 & 1\\
0 & 0 & -1 & 0\\
\end{pmatrix},& U^+_1&=\begin{pmatrix}
0 & 0 & -1 & 0\\
0 & 0 & -1 & 0\\
-1 & 1 & 0 & 0\\
0 & 0 & 0 & 0\\
\end{pmatrix},\\
U^+_2&=\begin{pmatrix}
0 & 0 & 0 & -1\\
0 & 0 & 0 & -1\\
0 & 0 & 0 & 0\\
-1 & 1 & 0 & 0\\
\end{pmatrix},& U^-_1&=\begin{pmatrix}
0 & 0 & -1 & 0\\
0 & 0 & 1 & 0\\
-1 & -1 & 0 & 0\\
0 & 0 & 0 & 0\\
\end{pmatrix},& U^-_2&=\begin{pmatrix}
0 & 0 & 0 & -1\\
0 & 0 & 0 & 1\\
0 & 0 & 0 & 0\\
-1 & -1 & 0 & 0\\
\end{pmatrix}.
\end{align*} 
 The  commutation relations between these elements are
\bq
\begin{alignedat}{6}\label{commutation_relations_XUR}
[X,U_j^\pm]&=\pm U_j^\pm, &\qquad [U_j^+,U_j^-]&=2X, &\qquad [U_1^\pm,U_2^\mp]&=2R, \\
[U_1^\pm,U_2^\pm]&=[X,R]=0, &[R,U_1^\pm]&=-U_2^\pm, & [R,U_2^\pm]&=U_1^\pm.
\end{alignedat}
\eq
\begin{rem}\label{rem:pmnotation}Note that the signs appearing in the commutation relations between $R$ and the elements $U_j^\pm$ are related to the lower index $j$ as opposed to the upper index $\pm$ which is the sign appearing in the commutation relations $[X,U_j^\pm]$. Unfortunately, the lack of a second ``$\pm$''-symbol makes it less elegant to deal with the commutation relations $[R,U_j^\pm]$. 
\end{rem}
 The Lie algebra splits as a direct sum $\g=\k\oplus \p$, where the subspaces $\k$, $\p$ are given by
 \[
 \k=\mathrm{span}_\R(R,K_1,K_2),\qquad \p=\mathrm{span}_\R(X,P_1,P_2)
 \]
with
 \[
K_{j}:=\frac{1}{2}(U_j^+- U^-_j),\qquad P_{j}:=\frac{1}{2}(U_j^++ U^-_j),\qquad j=1,2.
\] 
We introduce on $\g$ an inner product $\eklm{\cdot,\cdot}$ by declaring that $\{R,K_1,K_2,X,P_1,P_2\}$ form an orthonormal basis of $\g$ with respect to $\eklm{\cdot,\cdot}$.
We define the subgroup $K\subset G$ by 
$$K:=\exp(\k)\cong \SO(3),$$ where $\exp$ is the matrix exponential. We can identify the $3$-dimensional hyperbolic space $\H^3$, as a Riemannian manifold, with 
\[
G/K=\textrm{PSO}(1,3)/\SO(3)\simeq \H^3.
\]
Indeed, the tangent bundle of $\H^3$ is an associated\footnote{For a principal $G$-bundle $\pi:P\to {\bf M}$ and a representation $\varrho:G\to\End(V)$, the associated vector bundle $P\times_\varrho V$ is defined as
$
P\times_\varrho V:=(P\times V) /\sim$, where $(p,v)\sim (p\cdot g, \varrho(g^{-1}) v)$. Writing $[p,v]$ for an element in $P\times_\varrho V$, the vector bundle projection $P\times_\varrho V\to {\bf M}$ is given by $[p,v]\mapsto \pi(p)$.} vector bundle
\bq
T\H^3=T(G/K)\cong G\times_{\Ad(K)}\p,\label{eq:associatedp}
\eq
and the chosen $\Ad(K)$-invariant inner product defines a Riemannian metric on $T\H^3$ which is precisely the metric with constant sectional curvatures $-1$.
The manifold $G$ can thus be naturally identified with the frame bundle $F\hh^3$. The space $\k$ is the vertical space of the fibration $G\to G/K$ and $\p$ represents a horizontal space.
Any Lie algebra element $Y\in\g_\C$ acts on $\Cinft(G)$ by the left invariant vector field associated to $Y$, and as such, $X$ generates a flow on $G\simeq F\H^3$ which is the frame flow on hyperbolic space (see \cite[Section 1]{Pollicott92}). As we shall explain in Section \ref{sec:covariant}, viewing the flow of $X$ as a lift of the frame flow of $F{\bf M}$
(recall ${\bf M}=\Gamma\backslash\H^3$ being a hyperbolic co-compact quotient) one can use \eqref{commutation_relations_XUR} to obtain that  ${\rm span}(U_1^-, U_2^-)$ corresponds to $\tilde{E}_s$, ${\rm span}(U_1^+, U_2^+)$ to $\tilde{E}_u$ and ${\rm span}(R,X)$ to $\tilde{E}_0$ in the decomposition \eqref{stable/unstable/frame}.

 The commutation relations \eqref{commutation_relations_XUR} imply
\begin{align}\label{commutation_relations_XKP}
[X,K_i]=P_i,\qquad[X,P_i]=K_i,\qquad [K_i,P_j]=\delta_{ij}X.
\end{align}
  
 The Laplacian on $\g$
 \begin{equation}\label{laplacien}
 \Delta=-X^2-R^2-\frac{1}{2}\big((U_1^-)^2+(U_2^-)^2+(U_1^+)^2+(U_2^+)^2\big)
 \end{equation}  
 associated to $\cjg \cdot,\cdot\cjd$ satisfies $[R,\Delta]=0$.
 The inner product is positive definite and has the convenient property that it is invariant under the adjoint action $\Ad(K)$ of $K$ on $ \g$.  However, it is not $\Ad(G)$-invariant, as opposed to the Killing form on $\g$ (which is not positive definite).  Writing
 \[
\aL:=\mathrm{span}_\R(X),\qquad \m:=\mathrm{span}_\R(R),\qquad  \nL^\pm:=\mathrm{span}_\R(U_1^\pm,U_2^\pm),
 \]
 we have an orthogonal decomposition
 \bq
\g= \aL\oplus \m\oplus \nL^+\oplus \nL^-.\label{eq:Bruhat}
 \eq
An important role will also be played by the group
\[
M:=\exp(\m)\cong \SO(2),
\]
which is a subgroup of $K$ (as $\m$ is a subalgebra of $\k$). In the following, we will identify $\SO(3)=K$ and $\SO(2)=M$. 
Furthermore, in the complexified Lie algebra $\g_\C:=\g\otimes_\R \C$ the elements 
\bq\label{eq:etamu}
\eta_\pm:=\frac{1}{2}(U^-_1\pm iU^-_2)\in \nL^-_\C,\qquad \mu_\pm:=\frac{1}{2}(U^+_1\pm iU^+_2)\in \nL^+_\C,\qquad Q_\pm:=-(X\pm iR)
\eq
will play an important role due to the commutation relations
\begin{align}
\nonumber [X,\eta_\pm]&=-\eta_\pm,& [R,\eta_\pm]&=\pm i\eta_\pm,& [Q_\pm,\mu_\mp]&=-2\mu_\mp, \\
[X,\mu_\pm]&=\mu_\pm, & [R,\mu_\pm]&=\pm i\mu_\pm, &[\eta_\pm,\mu_\mp]&=Q_\pm,\label{eq:etamucomm} \\
\nonumber &&&& [\eta_\pm,\mu_\pm]&=[Q_\pm,\mu_\pm]=0.
\end{align}
From the first two columns in \eqref{eq:etamucomm} we read off that $\eta_\pm$ and $\mu_\pm$ act as raising and lowering operators on the joint spectrum of $X$ and $R$: if $u$ is a joint eigenfunction with eigenvalues $\lambda_X, \lambda_R \in \C$ with respect to $X$ and $R$, respectively, then $\eta_\pm u$ is a joint eigenfunction with eigenvalues $\lambda_X-1, \lambda_R\pm i$ and $\mu_\pm u$ is a joint eigenfunction with eigenvalues $\lambda_X+1, \lambda_R\pm i$.
\begin{rem}[Continuation of Remark \ref{rem:pmnotation}]\label{eq:signs}The notational ``sign management'' is now  different from the situation in Remark \ref{rem:pmnotation}: the index $\pm$ in $\eta_\pm$ and $\mu_\pm$ appears in the commutation relations with $R$, while the signs of the commutators of those elements with $X$ are no longer expressed in an index but in the two different symbols $\eta$ and $\mu$ themselves.
\end{rem} 

Using \eqref{eq:associatedp}, the unit tangent bundle $S\H^3\subset T\H^3$ can be identified with a quotient space:
\bq
G/M=\textrm{PSO}(1,3)/\SO(2)=S\H^3.\label{eq:spherebundlequot}
\eq
The identification is made using the diffeomorphism $G/M\owns gM\mapsto [g,X]\in S_{gK}(G/K)\subset G\times_{\Ad(K)}\p$. 
As mentioned above, the operator $X:C^\infty(G)\to C^\infty(G)$ is the generator of the frame flow on $S\H^3$, and as it commutes with the vector field $R$ tangent to the fibers of $G\to G/M$, it also induces a vector field  $X:C^\infty(G/M)\to C^\infty(G/M)$ that we can identify with the generator of the geodesic flow on the sphere bundle $S\H^3=G/M$ (see \cite[Section 3.3]{dfg} and \cite[Section 1]{Pollicott92}). To summarize the discussion, we have the correspondence of fibrations
\[G \to G/M\to G/K \,\textrm{ with }\, F\H^3\to S\H^3\to \H^3.\]
The Lie algebra element $Y\in\g_\C$ acts on $\Cinft(G)$ by the left invariant vector field associated to $Y$ and can thus be seen as a differential operator of order $1$, which we shall also denote by $Y$. The vector field $Y$ then also acts by duality on the space $\D'(G)$ of distributions on $G$.

\subsection{Representations, associated bundles, and their sections}\label{sec:bundles}

Consider the unitary representations $\varrho_n:\SOdeux\to\End(\C)$, $n\in\Z$, and $\tau:\SOdeux\to\End(\C^2)$, where $\varrho_n({\exp(\theta R)})=e^{-in\theta}$ and $\tau$ is the complexification of the standard representation of $\SOdeux$ on $\R^2$. Here $\C$ and $\C^2$ are equipped with the standard inner products, respectively. Note that under the identification $\C^2=(\C^2)^\ast$ one has $\tau^*=\tau$, where $\tau^*$ is the dual representation. We view $G\to G/M$ as a principal bundle with fiber $M$ and build associated vector bundles $\E,\L^n$on $S\hh^3=G/M$ by defining
\bq
\E:=G\times_\tau\C^2\cong \E^*, \qquad \L^n:=G\times_{\varrho_n}\C.\label{eq:defbundles}
\eq
The notation is chosen such that $\E$ corresponds to the complexification of the (rank 2 real) vector bundle $\E$ introduced in \cite{dfg}.
Note that the representation $\tau$ splits into irreducibles according to $\tau=\varrho_1\oplus \varrho_{-1}$ and that we have $\varrho_n=\varrho_{\pm 1}^{\otimes |n|}$ for $\pm n\geq 0$, which implies  that the line bundles $\L^n$ are tensor powers:
\[
\L^n=(\L^{\pm 1})^{\otimes |n|},\qquad \pm n\geq 0.
\]
Let us also mention that the bundles $\E$ and $\L^n$ carry natural Hermitian metrics corresponding fiber-wise to the standard inner products in $\C$ and $\C^2$, respectively. 

The Hilbert space $L^2(G)$, defined with respect to the Haar measure on $G$ corresponding to our choice of inner product on $\g$, decomposes by Fourier analysis (in other words, the Peter-Weyl theorem for $\SO(2)$ acting by the right regular representation on $L^2(G)$) into a Hilbert sum
\bq
L^2(G)=\bigoplus_{n\in \Z}L^2_n(G),\label{eq:Fourierdecomp}
\eq
where
\[
L^2_n(G):=\{f\in L^2(G) \,|\, f(g\cdot {\exp(\theta R)})=e^{in\theta}f(g),\, \forall\, \theta,\; \forall \,g\in G\}.
\]
For $n\in\Z$ define also the smooth functions equivariant by ${\rm SO}(2)$ of weight $n$:  
\[
C^\infty_n(G):= L^2_n(G)\cap C^\infty(G) = \{ f\in C^\infty(G)\,|\, Rf=inf\}.
\]
There is a natural identification between $C^\infty_n(G)$ and $C^\infty(S\mathbb H^3;\L^n)$. More generally, defining the distribution space
\[
\D'_n(G):=\{f\in \D'(G)\,|\, Rf=inf \},
\]
there is a natural identification between $\D'_n(G)$ and the space $\D'(S\hh^3;\L^{n})$ of distributional sections of the line bundle $\L^n$. 

In a similar spirit, we may identify the sections in $C^\infty(S\hh^3; \E)$ with equivariant functions on $G$ and the distributional sections in $\D'(S\hh^3;\E)$ with equivariant distributions on $G$.

\subsection{Canonical injections of $\L^n$ into $\otimes^m_S\E$}\label{sec:caninj}

Recall that the $\SO(2)$-representation $\tau$ splits into irreducibles according to $\tau=\varrho_1\oplus \varrho_{-1}$. To decompose for $m\in \N_0$ the symmetric tensor power $\otimes_S^m \tau$ into irreducibles, we introduce the surjective symmetrization map 
\bq
\mc{S}: \otimes^m \C^2\to \otimes_S^m \C^2\label{eq:sym1}
\eq
defined by linear extension of the weighted sum over all permutations of order $m$
\bq
v_{i_1}\otimes\cdots\otimes v_{i_m}\mapsto\frac{1}{m!} \sum_{\sigma\in \Pi_m} v_{\sigma(i_1)}\otimes \cdots\otimes v_{\sigma(i_m)}.\label{eq:sym2}
\eq
Choosing an orthonormal basis $\{v_+,v_{-}\}$ of $\C^2$ such that $\tau$ acts on $v_\pm$ by $\varrho_{\pm 1}$, each of the linearly independent elements $s_{l,m-l}:=\mc{S}(v_+^{\otimes l}\otimes v_-^{\otimes m-l})$, $0\leq l\leq m$, spans an irreducible subrepresentation of $\otimes_S^m \tau$ equivalent to $\varrho_{2l-m}$. This shows
\bq
\otimes_S^m \tau=\bigoplus_{l=0}^m\varrho_{2l-m},\qquad 
\otimes^m_S\E\cong\bigoplus_{l=0}^m\L^{2l-m},\label{eq:decomp1290890}
\eq
where $\L^{2l-m}$ injects into $\otimes^m_S\E$ by the map
\bq
\mathcal I^m_{2l-m}:\L^{2l-m} \ni [g, 1] \longmapsto [g,  s_{l,m-l} ]\in \otimes_S^{m} \E,\label{eq:injections}
\eq
extended by linearity. Clearly $\mathcal I^m_{2l-m}$ intertwines the left $G$-actions on $\L^{2l-m}$ and $\otimes_S^{m} \E$. 

Let us consider the action of the trace on the injections. Let $\T: \otimes^{m}_S (\C^2)^*\to \otimes^{m-2}_S (\C^2)^*$ be  the trace operator defined by $\T=0$ if $m\in\{0,1\}$ and for $m\geq 2$ by
\[
(\T\omega)(w_1,\ldots,w_{m-2}):=\omega(e_1,e_1,w_1,\ldots,w_{m-2})+\omega(e_2,e_2,w_1,\ldots,w_{m-2}),\qquad w_j\in \C^2,
\]
where $\{e_1,e_2\}$ is an arbitrary orthonormal basis of $\C^2$ consisting of real vectors $e_1,e_2$. 
We denote the induced bundle map $\T:\otimes^{m}_S \E\to \otimes^{m-2}_S \E$ by the same name,  using the identification $\mc{E}\simeq \mc{E}^*$. 
One computes that $\T$ acts as follows on the basis $\{s_{p,q}\}_{p+q=m}$ of $\otimes_S^{m} \C^2$:
\bq
\T(s_{p,q})=\begin{cases}\frac{pq}{(p+q)(p+q-1)}s_{p-1,q-1},\qquad & p,q\geq 1,\\
\smallskip0,&\text{else}.
\end{cases}\label{eq:trace}
\eq
From this and \eqref{eq:injections} we see that for  $n\in \N_0$ the  injections $\I^{n}_{\pm n}:\L^{\pm n}\hookrightarrow \otimes^{n}_S \E$ fulfill
\bq
\I^{n}_{n}(\L^{n})\oplus \I^{n}_{-n}(\L^{-n})=\{\omega\in \otimes^{n}_S \E\, |\, \T \omega=0\}=:\otimes^{n}_{S,0}\E, \label{eq:tracefree}
\eq
which means that the subbundle $\otimes^{n}_{S,0}\E=\ker \T\subset \otimes^{n}_{S}\E$ of trace-free symmetric tensors of order $n$ can be identified with $\L^{n}\oplus \L^{-n}$.

\subsection{Covariant derivatives, ladder and horocyclic operators, and Anosov decomposition}\label{sec:covariant}

Let us consider the differential operators 
\[\eta_\pm, \mu_\pm:C^\infty(G)\to C^\infty(G)\] 
defined by the Lie algebra elements $\eta_\pm,\mu_\pm$ from \eqref{eq:etamu}. It is a direct consequence of the commutation relations  \eqref{eq:etamucomm}  that $\eta_\pm,\mu_\pm$ restrict to operators 
$\eta_\pm, \mu_\pm: C^\infty_n(G)\to C^\infty_{n\pm 1}(G)$. 
Thus, they induce \emph{ladder operators}
\bq\label{eq:Xeta}
\eta_\pm,\mu_\pm : C^\infty(S\hh^3;\L^n)\to C^\infty(S\hh^3;\L^{n\pm 1})
\eq
which we denote by the same name for each $n$. 
Moreover, as already indicated at the end of Section \ref{eq:algdescr}, the commutation relation $[X,R]=0$ from \eqref{commutation_relations_XUR} implies that  the vector field $X$ on $G$ induces a vector field, also denoted by $X$, on $G/M=S\H^3$. On the other hand, considering $X\in \Cinft(G;TG)$ as a differential operator $C^\infty(G)\to C^\infty(G)$, it leaves $C^\infty_n(G)$ invariant for each $n\in \Z$ and therefore induces operators, denoted ${\bf X}$, 
\bq
{\bf X}:C^\infty(S\hh^3;\L^n)\to C^\infty(S\hh^3;\L^n),\qquad n\in \Z\label{eq:Xmathcal}.
\eq
 In fact, the operator ${\bf X}$ is the covariant derivative along the geodesic vector field $X$ on $S\H^3$ with respect to a linear connection on $\L^n$ for each $n\in \Z$. To explain this, note that there is a connection on the principal $\SO(2)$-bundle $G\to G/M= S\H^3$ defined by the one-form $\Theta\in \Omega^1(G,\m)$ that identifies the left invariant vertical vector field $R$ with the generator $R\in \m=\so(2)$. 
It induces linear connections, all denoted by $\nabla$, on $\L,\E$, and their tensor powers. The connections are compatible with the natural Hermitian metric on those bundles. If we regard smooth sections $f$ of one of those bundles as right-$\SO(2)$-equivariant smooth functions $\til{f}:G\to V$, where $V$ is either $\C$, $\C^2$ or a tensor power of the latter, and a vector field $Y\in C^\infty(S\hh^3; T(S\H^3))$ as a right-$\SO(2)$-equivariant smooth function $\til{Y}:G\to \aL\oplus  \nL^+\oplus \nL^-$, then $\nabla_Y$ is given by
\bq
\til{\nabla_Y (f)}(g)=\frac{d}{dt}\Big|_{t=0}\til{f}(g\exp(t\til{Y}(g))),\qquad g\in G.\label{eq:nabla}
\eq
We will prove later in Lemma \ref{lem:checkcondition} that the curvature $\Omega$ of this connection (and the induced one on the quotient $\Gamma\backslash G/M$) is invariant by the vector field $X$ in the sense that $\iota_X\Omega=0$.

For each $m\in\nn_0$, we define the operator ${\bf X}: C^\infty(S\hh^3; \otimes_S^m\mc{E}^*)\to C^\infty(S\hh^3; \otimes_S^m\mc{E}^*)$ by\footnote{Here we work with $\E^\ast$ rather than $\E$ to match the convention in the reference \cite{dfg}.} 
\bq\label{eq:defX}
{\bf X}f:=\nabla_X f .
\eq 
Note that on sections taking values in an embedded line bundle $\L^n$ inside $\otimes_S^{|n|}\mc{E}^*$ (by  \eqref{eq:injections} and identifying $(\L^n)^\ast=\L^{-n}$), the operator $\Xbf$ coincides with the one introduced in \eqref{eq:Xmathcal}.  
The tangent bundle $T(S\H^3)$ has an Anosov decomposition   into neutral, stable, and unstable subbundles:
\begin{align*}
T(S\H^3)=E_0\oplus E_s\oplus E_u.
\end{align*}
Here $E_0=\R X$ and the vector bundles $E_{s/u}$ are obtained as the associated bundles to the restricted $\Ad(\SO(2))$-representations on the  $\Ad(\SO(2))$-invariant subspaces
$
\nL^\pm\subset \g,
$ 
see the end of Section \ref{eq:algdescr}. 
Dually, we have a decomposition of the cotangent bundle
\begin{align*}
T^\ast(S\H^3)=E_0^*\oplus E_u^*\oplus E_s^*,
\end{align*}
where $E_{s/u}^*$ are defined by
\[
E_u^*(E_0\oplus E_u)=0, \, \quad E_s^*(E_0\oplus E_s)=0, \,\quad E_0^*(E_s\oplus E_u)=0.
\]
Similarly, the decomposition \eqref{eq:Bruhat} induces a decomposition 
\[
TG=\R R\oplus\R X\oplus \til{E}_u\oplus \til{E}_s,\quad T^*G=\R\Theta \oplus 
\til{E}_0^*\oplus \til{E}_s^*\oplus \til{E}_u^* \]
with $E_{s/u}^*(E_{s/u}\oplus \R R\oplus \R X)=0$ and $\til{E}_0^*(\R R\oplus \til{E}_s\oplus \til{E}_u)=0$, where $\Theta$ is the connection $1$-form introduced just below \eqref{eq:Xmathcal}. 
Note that $\til{E}_{s/u/0}$ project to $E_{s/u/0}$ by $d\tilde{\pi}$ if $\tilde{\pi}: G\to G/M$ is the projection.
By the commutation relations \eqref{commutation_relations_XUR}, we have on $G$
\[ d\varphi_t U_j^\pm=e^{\pm t}U_j^\pm,\quad d\varphi_t X=X, \quad d\varphi_t R= R\] 
which means that the differential of the flow $\varphi_t$ of $X$ on $G$ is exponentially contracting/expanding on $\widetilde E_s/\widetilde  E_u$ and it 
preserves the  norm on $\R R\oplus \R X$.  

There is a bundle isomorphism $\theta^-:\E\to \C E_u$  induced by the equivalence of representations $\tau\sim (\Ad(M)|_{\nL^-})_\C$ given by identifying the basis $\{v_+,v_-\}$ of $\C^2$ with the basis $\{\eta_+,\eta_-\}$ of $\nL^-_\C$ from \eqref{eq:etamu}. We use this bundle isomorphism to define the \emph{horocyclic operator} $\U_-:C^\infty(S\hh^3; \otimes_S^m\mc{E}^*)\to C^\infty(S\hh^3; \mc{E}^*\otimes \otimes_S^{m}\mc{E}^*)$ as in \cite{dfg} by 
\[ \forall f\in C^\infty(S\hh^3), \forall w_i\in \mc{E},\quad  
(\U_-f)(w_1,\dots,w_{m+1}) :=(\nabla_{\theta^-(w_1)}f)(w_2,\dots,w_{m+1}) .\]
It suffices for this article to consider these operators acting on (distributional) sections of the space $\otimes^m_S\E^*$. 
From \eqref{eq:nabla} one sees for a section $f\in C^\infty(S\hh^3;\otimes^m_S\E^*)$ that if
\[
f(gM) = [ g, \sum_{K\in\sA^m} \lambda_K\, v_{k_1}^\ast\otimes\cdots \otimes v_{k_m}^\ast ], \quad \lambda_K\in \Cinft(G),\quad g\in G,
\]
where $\sA^m=\{(k_1,\ldots,k_m)\,|\, k_j\in \{+,-\}\}$, the sum runs over all tuples $K$, and $\{v_+^\ast, v_-^\ast\}$ is the dual basis to $\{v_+,v_-\}$, then
\begin{align}\begin{split}
({\bf X} f)(g M) &= [g, \sum_{K\in\sA^m} (X \lambda_K) v_{k_1}^\ast\otimes\cdots \otimes v_{k_m}^\ast ], \\
(\mc{U}_-f)(g M) &= [g, \sum_{s\in \{+,-\}} \sum_{K\in\sA^m} (\eta_s \lambda_K) v_s^\ast\otimes v_{k_1}^\ast\otimes\cdots \otimes v_{k_m}^\ast ].\label{eq:XU_-}\end{split}
\end{align}
Note that in this notation the fact that we are dealing with symmetric tensors is implicitly reflected in relations between the coefficients $\lambda_K$, and similarly for $X \lambda_K$ and $\eta_s \lambda_K$. 
As remarked in \cite{dfg}, the operator $\U_-^mf$ is a symmetric tensor of degree $m$ if $f$ is a smooth function on $S\H^3$. In order to preserve this property when acting on symmetric tensors, we introduce the symmetrisation operator $\S$ and consider $\S\U_-$. In coordinates this amounts to stating
\bq
(\S\U_-f)(g M) = [g, \sum_{s\in \{+,-\}} \sum_{K\in\sA^m} (\eta_s \lambda_K)\, \mc{S}(v_s^\ast\otimes v_{k_1}^\ast\otimes\cdots \otimes v_{k_m}^\ast) ]\label{eq:Uminuscoord}
\eq 
The operator $\S\U_-$ yields for each $m\in \N_0$ a map
\bq
\S\U_-: C^\infty(S\hh^3; \otimes_S^m\E^*)\to C^\infty(S\hh^3; \otimes_S^{m+1}\E^*).\label{eq:SUminus}
\eq
This map extends by duality to an operator $\S\U_-$ on distributional sections.

\section{Resolvent of the frame flow}\label{sec:resolv}

In this article we are interested in resonances of the frame flow  associated to the geodesic flow on $\Gamma\backslash S\H^3$ when $\Gamma\subset G$ is a co-compact discrete subgroup with no torsion. Let us fix the notation  
\bq
 {\bf M}:=\Gamma\backslash\H^3=\Gamma\backslash G/K, \quad \M:= S{\bf M}=\Gamma\backslash S\hh^3=\Gamma\backslash G/M, \quad 
F{\bf M}:=\Gamma\backslash G.\label{eq:notation}
\eq
The frame flow is technically the flow on $\Gamma\backslash G$ generated by the vector field $X$. 
First, we notice that all the objects, operators, spaces, bundles introduced in the previous section on $G$ and $G/M$ descend to $\Gamma\backslash G=F{\bf M}$ and $\gam G/M=\M$, respectively. 
Due to the Fourier mode decomposition \eqref{eq:Fourierdecomp}, which descends to a Fourier mode decomposition of $L^2(\gam G)$, we can understand the resolvent of the generator $X$ of the frame flow on $F{\bf M}$ by analyzing ${\bf X}=\nabla_X$ acting on sections of the line bundles $\L^n$ 
over $\M$.

\subsection{The resolvent in ${\rm Re}(\la)>0$}\label{sec:ResRepos}

On $F{\bf M}=\Gamma\backslash G$, there is an invariant measure $\mu_G$ with respect to $X$, which implies that $iX$ is self-adjoint on $L^2(\Gamma\backslash G;\mu_G)$.
The operator $X: C^\infty(F{\bf M})\to C^\infty(F{\bf M})$ has a well-defined resolvent for ${\rm Re}(\la)>0$
\bq\label{resolventff}
R(\lambda):=(-X - \lambda)^{-1}: L^2({\bf M})\to L^2({\bf M}), \quad  
R(\lambda)f=-\int_{0}^\infty e^{-\lambda t}\varphi^*_{-t}f \, dt
\eq
where $\varphi_t=e^{tX}$ is the flow of $X$ at time $t$ on $F{\bf M}=\Gamma\backslash G$. Here the operator $R(\la):L^2(F{\bf M})\to L^2(F{\bf M})$ is bounded and fulfills
\bq
\norm{R(\lambda)}_{L^2(\Gamma\backslash G)\to L^2(\Gamma\backslash G)}\leq 1/{\rm Re}(\la),\label{eq:resnormRlambda}
\eq
as one checks by using that $\|\varphi_t^*f\|_{L^2}=\|f\|_{L^2}$ in \eqref{resolventff}.
In order to extend this operator family, we can use the fact that  $R(\lambda)$ preserves each space $L^2_n(\Gamma\backslash G)$ and thus
\begin{equation}\label{sumRn} 
R(\la)f= \sum_{n\in\zz} R_n(\la)f_n\quad \textrm{ with } R_n(\la)=(-X-\la)^{-1}: L^2_n(\Gamma\backslash G)\to L^2_n(\Gamma\backslash G)
\end{equation} 
where $f=\sum_{n\in\zz}f_n$ with $f_n\in L^2_n(\Gamma\backslash G)$.  In particular, the sum in \eqref{sumRn}  converges. We will then study each $R_n(\la)$, which we can also view as the resolvent of the operator ${\bf X}=\nabla_X$ 
on sections of the bundle $\mc{L}^n$ over $S{\bf M}$. From \eqref{eq:resnormRlambda} we get  
for ${\rm Re}(\la)>0$ the norm bound
\bq \|R_n(\la)\|_{L^2_n(\Gamma\backslash G)\to L^2_n(\Gamma\backslash G)}\leq 1/{\rm Re}(\la)\qquad \forall\;n\in \Z.\label{eq:resnormL2}
\eq
Since we shall analyze the family of operators $({\bf X}+\la)$ on sections of $\mc{L}^n$ over $\M=S{\bf M}$ using microlocal methods, it will be convenient to use an $n$-dependent quantization for families of bundles. This is the topic of the next section.
 
\subsection{Semiclassical analysis for line bundle tensor powers}\label{sec:calculus}
We introduce some semiclassical tools to analyze families of operators acting on sections of high-tensorial powers of a line bundle. We will use the formalism of Charles \cite{charles2000}.

Let $\M$ be a closed Riemannian manifold of dimension $d$ and consider a line bundle $\mc{L}$ over $\mc{M}$ equipped with a Hermitian product and a Hermitian connection $\nabla$.   
 We shall now be interested in the family $\mc{L}^n:=\L^{\otimes n}$ (for $n\in \N_0$) of tensor powers of $\L$ and consider the power $n$ as a semiclassical asymptotic parameter. More precisely, we consider a semiclassical parameter $h\in D\subset (0,2]$ for some set $D$ whose closure in $[0,2]$ contains $0$, together with a \emph{tensor power map}
 \[
D\owns h\mapsto n(h)\in \N_0
 \]
 of which we assume that it does not grow faster than the inverse of $h$:
 \bq
 \exists\;C_0>0: hn(h)\leq C_0\quad \forall\;h\in D.\label{eq:fundprop}
 \eq
A ``trivial'' example is $D:=(0,1]$, $n(h):=n_0$, $C_0:=n_0$ for some $n_0\in \N_0$, which leads to the usual semiclassical analysis on the fixed line bundle $\L^{n_0}$. The canonical ``non-trivial'' example consists in taking $D:=\{1/j \,|\, j\in \nn^*\}$ and $n(h):=1/h$, $C_0:=1$. Such a reciprocal relation between $h$ and $n(h)$ is used (in greater generality) in the gauge theory works \cite{hogreve83,schradertaylor84,schradertaylor89,guilleminuribe86,guilleminuribe89,
 guilleminuribe90,zelditch19921} mentioned in the introduction.  In some cases we will just take this choice but in certain cases we shall need  $n(h)$ so that $h n(h)\to 0$ as $h\to 0$.  
 The tensor power map $n(h)$ allows us to consider the family of line bundles $\mc{L}^{n(h)}$ for $h\in D$.
We will roughly follow the approach of \cite[Chapter 4]{charles2000} (see also \cite[Sections 12.11-12.13]{guilleminsternbergnotes}) with few modifications.
The connection $\nabla$ on $\L$ induces a connection $\nabla^{n(h)}$ on $\L^{n(h)}$ for all $h\in D$ such that for each local nowhere-vanishing section ${\bf s}\in \Cinft(W;\L)$ over some open set $W\subset \M$, one has
$$\nabla^{n(h)}(f\textbf{s}^{n(h)})= df \otimes {\bf s}^{n(h)}+f{n(h)}{\bf s}^{-1}(\nabla{\bf s})\otimes{\bf s}^{n(h)}\qquad \forall\; f\in \CT(W).$$ 
Here and in the following we define ${\bf s}^{-1}(\mathbf t):=\eta$ if $\mathbf t=\eta \otimes \mathbf s$ with $\eta\in \Cinft(W;\Lambda^1(T^\ast W)_\C)$. Observe that in such a local trivialisation, for each smooth vector field $X$, one has 
$${\bf s}^{-{n(h)}}h\nabla_X^{n(h)}(f{\bf s}^{n(h)})=P_{h}f,$$ where $P_{h}:=hX+hn(h) {\bf s}^{-1}\nabla_X{\bf s}$ is a first order semiclassical differential operator on $W$.

 In particular, one can define the semiclassical  Sobolev norms for each $N\in\R$
\bq
\forall u\in C^\infty(\mc{M};\mc{L}^{n(h)}), \quad \|u\|_{H^N_{h}(\L^{n(h)})}:= \| (1+h^2\Delta_{n(h)})^{N/2}u\|_{L^2(\L^{n(h)})},\label{eq:sobolevnorm1}
\eq  
where $\Delta_{n(h)}=(\nabla^{n(h)})^*\nabla^{n(h)}$, and we denote by $H^N_{h}(\mc{M};\mc{L}^{n(h)})$ the completion of  the space $\Cinft(\M;\L^{n(h)})$ using this norm. 

Let $\delta\in [0,1/2)$ be small.
We fix an order function $m\in S^0(\M)$ as in \cite{FS11} and we denote by 
$S_{h,\delta}^{m}(\mc{M})$ the set of symbols of order $m$, i.e., the smooth functions $a_{h}\in C^\infty(T^*\mc{M})$ satisfying for all $h\in D$, $\alpha,\beta$ multiindices, that  for any local chart $W\subset \mc{M}$ there is $C_{\alpha,\beta}>0$ independent of $h$ so that
\bq \forall (x,\xi)\in T^*W, \quad  |\pl_x^\alpha\pl_\xi^{\beta} a_{h}(x,\xi)| \leq C_{\alpha,\beta}\cjg \xi\cjd^{m(x,\xi)+\delta|\alpha|-(1-\delta)|\beta|}.\label{eq:symbcondition}
\eq
We define the set $\Psi^{m}_{h,\delta}(\M;\L^{n(h)})$ of semiclassical pseudodifferential operators of order  $m$ acting on smooth (and by duality on distributional) sections of $\L^{n(h)}$ as the set of all (families of) continuous linear maps $A_{h}:C^\infty(\mc{M};\mathcal L^{n(h)})\to C^\infty(\mc{M};\mathcal L^{n(h)})$ such that: 

\noindent 1) $\forall \chi,\chi'\in C^\infty(\mc{M})$ with $\supp(\chi)\cap \supp(\chi')=\emptyset$, $\chi A_{h} \chi': \mc{D}'(\mc{M};\mc{L}^{n(h)})\to C^\infty(\mc{M};\mc{L}^{n(h)})$ is continuous 
with operator norms  
\[ \forall N>0, \quad \|\chi A_{h} \chi'\|_{H_{h}^{-N}(\L^{n(h)})\to H_{h}^{N}(\L^{n(h)})}=\mc{O}(h^\infty),\]
2) in any local chart $W\subset \mc{M}$ with a local 
trivializing section ${\bf s}\in C^\infty(W;\L)$ fulfilling $\|{\bf s}\|=1$ fiber-wise, there exists $a_{h}\in S_{h,\delta}^{m}(W)$ 
such that for all $f\in \CT(W)$ and $x\in W$
\begin{equation}\label{eq:localhpdo}
{\bf s}^{-n(h)}A_{h}(f{\bf s}^{n(h)})(x)=\frac{1}{(2\pi h)^d}\int e^{\frac{i}{h}(x-x')\cdot\xi}a_{h}(x,\xi) f(x')dx'd\xi.
\end{equation}
Following \cite{charles2000}, one can define a notion of \emph{modified principal symbol} of $A_{h}$ as follows: if $\beta:=\sum_{j=1}^n\beta_j dx_j \in C^\infty(W;T^*\rr^d)$ is the local connection $1$-form  such that 
$\nabla {\bf s}=-i \beta\otimes {\bf s}$, then the modified principal symbol of an operator $A_h$ as in \eqref{eq:localhpdo} is defined by
\bq
\sigma_{h,n(h)}(A_{h})(x,\xi):=[a_h\big(x,\xi+ h n(h)\beta(x)\big)]\in S_{h}^{m}(\M)/h S_{h}^{m-1+2\delta}(\M),\label{eq:defprincsymb}
\eq
where $[a_{h}]$ means the class of $a_{h}$ in $S_{h}^{m}(\M)/h S_{h}^{m-1+2\delta}(\M)$. It is easy to check that $\sigma_{h,n(h)}(A_h)\in C^\infty(T^*\mc{M})$ is well-defined independently of the coordinate system and the trivialisation of $\mc{L}$: 
this follows from the fact that for a change ${\bf s}'=e^{-i\omega}{\bf s}$ of local trivialisation with $\omega\in C^\infty(W;\rr)$, the operator $A_{h}$ fulfills for $f\in \CT(W)$

\[\begin{split} 
{\bf s'}^{-n(h)}A_{h}(f{\bf s'}^{n(h)})(x)&=  e^{in(h)\omega(x)}\,{\bf s}^{-n(h)}A_{h}(fe^{-in(h)\omega}{\bf s}^{n(h)})(x)\\
&=  \frac{1}{(2\pi h)^d}\int e^{\frac{i}{h}[(\xi+hn(h)(d\omega_x+Q_{x,x'}(x-x')))\cdot(x-x'))]}a_{h}(x,\xi) f(x')dx'd\xi\\
&=  \frac{1}{(2\pi h)^d}\int e^{\frac{i}{h}\xi\cdot(x-x')}a_{h}(x,\xi-h n(h)(d\omega_x+Q_{x,x'})(x-x')) f(x')dx'd\xi,
\end{split}\]
where $Q_{x,x'}$ is a smooth symmetric matrix. We Taylor expand the symbol 
\[a_{h}(x,\xi-h n(h)d\omega_x)- hn(h)\int_0^1\pl_{\xi}a_{h}(x,\xi-h n(h)d\omega_x+tQ_{x,x'}(x-x'))\cdot Q_{x,x'}(x-x')dt\]
and use $h\pl_\xi(e^{\frac{i}{h}\xi\cdot(x-x')})=i(x-x')e^{\frac{i}{h}\xi\cdot(x-x')}$ with an integration by parts in $\xi$, showing that the remainder produces an extra $\mc{O}(h)$ in the symbol. In particular, since the connection form $\beta'$ in the trivialisation ${\bf s}'$ is related to $\beta$ by $\beta'=\beta+d\omega$, we see that $\sigma_{h,n(h)}(A_{h})$ is invariant under the change of trivialisation as an element in $S_{h,\delta}^{m}(\M)/h S_{h,\delta}^{m-1+2\delta}(\M)$.
 We notice that the condition $hn(h)\leq C_0$ is important in order that $\Psi^{m}_{h,\delta}(\M;\L^{n(h)})$ be well-defined (under change of coordinates and choice of local trivializing section ${\bf s}$). The fact that $\mc{M}$ is compact is also 
important to have the connection form uniformly bounded.
We can also define a quantization procedure 
\bq
{\rm Op}_{h,n(h)}: S_{h,\delta}^{m}(\mc{M})\to \Psi_{h,\delta}^{m}(\mc{M};\L^{n(h)})\label{eq:Oph}
\eq so that for $a_h\in S_{h,\delta}^{m}(\mc{M})$
\bq
\sigma_{h,n(h)} ({\rm Op}_{h,n(h)}(a_h))=[a_h].\label{eq:symbolrep}
\eq 
This is achieved, similarly as in the case of a trivial line bundle, by gluing together local quantization maps using a  partition of unity. If $\chi\in \CT(W)$ is a cutoff function on a chart $W$ with local connection $1$-form $\beta$ and trivializing section ${\bf s}$,  the local quantization of $a_h\chi$ is defined by the following formula, valid for any function  $f\in \CT(W)$:
\bq\label{expquantization}
{\bf s}^{-n(h)}{\rm Op}_{h,n(h)}(a_h\chi)(f{\bf s}^{n(h)})(x):=\frac{1}{(2\pi h)^d}\int e^{\frac{i}{h}(x-x')\cdot\xi}a_{h}(x,\xi- h n(h)\beta(x))\chi(x) f(x')dx'd\xi.
\eq
Thanks to the condition \eqref{eq:fundprop} and the compact support of $\chi$, the function $(x,\xi)\mapsto a_{h}(x,\xi- h n(h)\beta(x))\chi(x)$ lies in $S_{h,\delta}^{m}(W)$, so the above formula defines an operator in $\Psi_{h,\delta}^{m}(W;\L^{n(h)})$. By \eqref{eq:defprincsymb} the symbol of this local operator is represented by $a_h\chi$, as desired. 

We will also write $\Psi_h^{m}(\mc{M};\L^{n(h)}):=\cap_{\delta>0}\Psi^m_{h,\delta}(\mc{M};\L^{n(h)})$ and $S_{h}^{m}(\mc{M}):=\cap_{\delta>0} S_{h,\delta}^{m}(\mc{M})$. Furthermore, if it is clear from the context which tensor power map is used, we will use the simplified notation ${\rm Op}_{h}:={\rm Op}_{h,n(h)}$ and $\sigma_{h}:=\sigma_{h,n(h)}$.

Just like for the trivial line bundle case, we have all the same properties: composition, boundedness, elliptic estimates, G\aa rding inequalities (see \cite{zworski} and \cite[Appendix E]{dyatlovzworskibook} for details). 

For $A_{h}\in \Psi_h^{m}(\mc{M};\L^{n(h)})$ and $B_{h}\in \Psi_h^{m'}(\mc{M};\L^{n(h)})$, we have 
$A_{h}B_{h}\in \Psi_h^{m+m'}(\mc{M};\L^{n(h)})$ and 
\bq
 \sigma_{h,n(h)}(A_{h}B_{h})=\sigma_{h,n(h)}(A_{h})\sigma_{h,n(h)}(B_{h}).\label{eq:compositionformula}
 \eq
As in \cite[Theorem 13.13]{zworski}, operators  $A_{h}\in \Psi^{0}_h(\mc{M};\L^{n(h)})$ are bounded on $L^2(\M;\L^{n(h)})$ with norm 
\bq
 \|A_{h}\|_{L^2\to L^2}\leq \sup_{(x,\xi)\in T^*\mc{M}}\|\sigma_{h,n(h)}(A_{h})(x,\xi)\|+\mc{O}(h).\label{eq:normbound}
 \eq
We can then define the semiclassical wave-front sets of $h$-tempered sections $u\in\mc{D}'(\mc{M};\L^{n(h)})$ and of operators $A_{h}\in \Psi_h(\mc{M};\L^{n(h)})$ just as for trivial bundles (see for example \cite[Section E.2]{dyatlovzworskibook}); we will denote them by ${\rm WF}_{h}(u)\subset T^*\mc{M}$ and ${\rm WF}_{h}(A_{h})\subset T^*\mc{M}$.
\begin{exam}\label{ex:Ahcovderiv} Since it will be our main application, let us consider as an example the operator 
$-ih\nabla_Y^{n(h)}\in \Psi^1_{h}(\mc{M};\L^{n(h)})$ where $Y$ is a smooth vector field and $h\in D$. Since for each local section ${\bf s}$ and each $f\in C^\infty(\mc{M})$  one has 
\[ -ih\nabla_Y^{n(h)}(f{\bf s}^{n(h)})={\bf s}^{n(h)} \big(-ihYf-ih n(h)f {\bf s}^{-1}\nabla_Y{\bf s}\big)=
{\bf s}^{n(h)} \big(-ihY-h n(h)\beta(Y)\big)f,\]
we obtain
\begin{equation}\label{symbolPh}
\sigma_{h,n(h)}(-ih\nabla_Y^{n(h)})(x,\xi)=[\xi(Y(x))].
\end{equation} 
\end{exam}
The main novelty is the behavior of the principal symbol with respect to commutators (or, more precisely, the behavior of the subprincipal symbol which we do not define here). To describe this, let $\omega_0\in \Omega^2(T^\ast \M)$ be the canonical Liouville symplectic form on the total space of the cotangent bundle $T^\ast \M\stackrel{\pi}{\to}\M$ and $\Omega_{\nabla}\in \Omega^2(\M)$ the curvature form of the connection $\nabla$ on $\L$. Then, for each $\rho\in \R$,
\bq
\omega_\rho:=\omega_0 + i\rho\,\pi^\ast \Omega_{\nabla},\label{eq:omega_rho}
\eq
is a new symplectic form on $T^\ast \M$.  Indeed, $\Omega_{\nabla}$ is closed and $\omega_0$ remains symplectic upon addition of the pullback of any closed form along $\pi$. See \cite[Thm.\ 5.2]{marlesymp}, for example.  

The symplectic form $\omega_\rho$ defines a Poisson bracket $\{\cdot,\cdot\}_{\omega_\rho}$ on $C^\infty(T^\ast \M)$ and for each $f\in \Cinft(T^\ast \M)$ a Hamiltonian vector field $H_f^{\omega_\rho}$ characterized by\footnote{The opposite sign convention $\{f,g\}_{\omega_\rho}=-H_f^{\omega_\rho}g$ is also common in the literature. Our sign convention agrees with that of \cite[(A.2.1)]{dyatlovzworskibook}.}
\bq
\iota_{H^{\omega_\rho}_f}\omega_\rho=df,\qquad  \{f,g\}_{\omega_\rho}=H_f^{\omega_\rho}g\qquad \forall\; g\in C^\infty(T^\ast \M). \label{eq:defhamilt}
\eq
 We then  have the following result (compare \cite[(4.4)]{charles2000}): for $A_{h}\in \Psi^{m}_h(\M;\L^{n(h)})$ and $B_{h}\in \Psi^{m'}_h(\M;\L^{n(h)})$, we have $\frac{i}{h}[A_{h},B_{h}]\in \Psi_h^{m+m'-1}(\mc{M};\L^{n(h)})$  and if $a_h,b_h\in \Cinft(T^\ast \M)$ represent $\sigma_{h,n(h)} (A_{h})$, $\sigma_{h,n(h)} (B_{h})$, respectively, then
\bq\label{eq:commpoisson}
\sigma_{h,n(h)} \Big(\frac{i}{h}[A_{h},B_{h}]\Big)= [\{a_h,b_h\}_{\omega_{hn(h)}}].
\eq
This can be seen by considering $A_h={\rm Op}_{h,n(h)}(a_h)$, $B_h={\rm Op}_{h,n(h)}(b_h)$ using the expression \eqref{expquantization}, then the usual composition formula \cite[Theorem 4.14]{zworski} shows that the local symbol of $\frac{i}{h}[A_{h},B_{h}]$ in local coordinates and in the trivialization ${\bf s}$ is given by $\{\tilde{a}_h,\tilde{b}_h\}_{\omega_0}+\mc{O}(h)$ where 
$\tilde{a}_h=a_h \circ \psi$ and $\tilde{b}_h=b_h\circ \psi$, with $\psi(x,\xi):=(x,\xi-hn(h)\beta(x))$. In the trivialization $\omega_{hn(h)}=\omega_0+hn(h)\pi^*d\beta$,  
and $H_{a_h}^{\omega_{hn(h)}}=H_{a_h}^{\omega_0}-hn(h)\sum_{k,l}(\pl_{x_k}\beta_l-\pl_{x_l}\beta_k)\pl_{\xi_k}a_h\pl_{\xi_l}$ if $\beta=\sum_{k}\beta_k dx_k$. It is then an easy computation to check that
\[\{\tilde{a}_h,\tilde{b}_h\}_{\omega_0}=H_{\tilde{a_h}}^{\omega_0}(\tilde{b}_h)=H_{a_h}^{\omega_{hn(h)}}(b_h)\circ \psi=\{a_h,b_h\}_{\omega_{hn(h)}}\circ \psi,\]
giving \eqref{eq:commpoisson}.

If the tensor power map $n(h)$ is such that the limit $L:=\lim_{h\to 0}hn(h)\in [0,\infty)$ exists, then the symplectic form $\omega_{hn(h)}$ converges as $h\to 0$ to the $h$-independent symplectic form $\omega_{L}$, and in view of \eqref{eq:fundprop} one can always achieve this by making $D$ smaller (i.e., passing to a subsequence). One then also obtains 
\bq
\{f,g\}_{\omega_{hn(h)}}\stackrel{h\to 0}\longrightarrow \{f,g\}_{\omega_L}\quad \text{in }\Cinft(T^\ast \M)\qquad\forall\; f,g\in \Cinft(T^\ast \M). \label{eq:Poissonconvergence}
\eq

\begin{lemma}\label{lem:Hp}Let $Y$ be a smooth vector field on $\M$ satisfying $\iota_Y\Omega_\nabla=0$. Then the function $p_0:T^\ast \M\to \R$ defined by $p_0(x,\xi):=\xi(Y(x))$ satisfies
\bq
H_{p_0}^{\omega_\rho}=H_{p_0}^{\omega_0} \quad\forall\;\rho\in \R.\label{Hpomega=Hp2}
\eq
\end{lemma}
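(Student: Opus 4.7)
The plan is to verify directly that the vector field $H_{p_0}^{\omega_0}$ already satisfies the defining equation $\iota_V \omega_\rho = dp_0$ for $H_{p_0}^{\omega_\rho}$; uniqueness of Hamiltonian vector fields then yields \eqref{Hpomega=Hp2}.

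First I would identify $H_{p_0}^{\omega_0}$ as the cotangent lift of $Y$. In local coordinates $(x_1,\dots,x_d)$ on $\M$ with canonical conjugate coordinates $(\xi_1,\dots,\xi_d)$ on $T^\ast \M$, writing $Y=Y^i\pl_{x_i}$ gives $p_0(x,\xi)=\xi_i Y^i(x)$, and the standard formula for the Hamiltonian with respect to $\omega_0$ yields
\[
H_{p_0}^{\omega_0}= Y^i\pl_{x_i}- \xi_j \frac{\pl Y^j}{\pl x_i}\pl_{\xi_i}.
\]
In particular, $\pi_\ast H_{p_0}^{\omega_0}=Y$ under the projection $\pi:T^\ast \M\to \M$.

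Next I would exploit the hypothesis $\iota_Y \Omega_\nabla=0$ via the pullback $\pi^\ast \Omega_\nabla$. Since $\pi^\ast \Omega_\nabla$ is pulled back from a $2$-form on $\M$, its value on any pair of tangent vectors to $T^\ast \M$ only depends on their images under $d\pi$. Hence for any $W\in T(T^\ast \M)$,
\[
\bigl(\iota_{H_{p_0}^{\omega_0}}\pi^\ast \Omega_\nabla\bigr)(W)=\Omega_\nabla\bigl(\pi_\ast H_{p_0}^{\omega_0},\pi_\ast W\bigr)= \Omega_\nabla(Y,\pi_\ast W)=(\iota_Y \Omega_\nabla)(\pi_\ast W)=0.
\]
Therefore $\iota_{H_{p_0}^{\omega_0}}\pi^\ast \Omega_\nabla=0$, and combining with $\iota_{H_{p_0}^{\omega_0}}\omega_0=dp_0$ and \eqref{eq:omega_rho} gives $\iota_{H_{p_0}^{\omega_0}}\omega_\rho=dp_0$ for every $\rho\in \R$. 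By uniqueness of the solution of $\iota_V\omega_\rho=dp_0$ (which holds because $\omega_\rho$ is symplectic), this forces $H_{p_0}^{\omega_\rho}=H_{p_0}^{\omega_0}$.

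No serious obstacle arises: the only structural ingredient is the observation that a $\pi$-pulled-back form is annihilated by any vertical vector and only sees horizontal projections, after which the hypothesis $\iota_Y\Omega_\nabla=0$ does all the work. One could equivalently argue without coordinates by defining $H_{p_0}^{\omega_0}$ through its principal symbol interpretation and then invoking the same projection argument.
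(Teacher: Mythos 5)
Your proof is correct and follows essentially the same route as the paper: both verify that $H_{p_0}^{\omega_0}$ already satisfies the defining equation $\iota_V\omega_\rho=dp_0$, reducing the check to the identity $\iota_{H_{p_0}^{\omega_0}}\pi^\ast\Omega_\nabla=0$, which follows from $d\pi\, H_{p_0}^{\omega_0}=Y$ and the hypothesis $\iota_Y\Omega_\nabla=0$. Your explicit coordinate formula for $H_{p_0}^{\omega_0}$ and the remark that pulled-back forms only see horizontal projections simply spell out what the paper asserts more tersely.
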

\begin{proof}
 Recall from \eqref{eq:omega_rho} that $\omega_\rho=\omega_0 + i\rho\,\pi^\ast \Omega_{\nabla}$ and that $H^{\omega_\rho}_p$ is defined by the identity $\iota_{H^{\omega_\rho}_{p_0}}\omega_\rho=dp_0$. Let us therefore check this identity for $H^{\omega_0}_{p_0}$:
\bq
\iota_{H^{\omega_0}_{p_0}}\omega_\rho=\iota_{H^{\omega_0}_{p_0}}\omega_0 + i\rho\,\iota_{H^{\omega_0}_{p_0}}\pi^\ast \Omega_{\nabla}=dp_0 + i\rho\,\iota_{H^{\omega_0}_{p_0}}\pi^\ast \Omega_{\nabla}.\label{eq:defeqcheck}
\eq
Since $p_0(x,\xi)=\xi(Y(x))$, one reads off that $d\pi H^{\omega_0}_{p_0}=Y$. Together with the assumption $\iota_Y\Omega_\nabla=0$, this implies that $\iota_{H^{\omega_0}_{p_0}}\pi^\ast \Omega_{\nabla}=0$ and we obtain \eqref{Hpomega=Hp2}.
\end{proof}

In this situation, the results in \cite[Section E.3]{dyatlovzworskibook} on semiclassical measures  apply just equally in our setting, with the Hamiltonian vector fields appearing in the results being defined with respect to $\omega_L$. Using Lemma \ref{lem:Hp} and Example \ref{ex:Ahcovderiv}, the results in \cite[Section E.4]{dyatlovzworskibook} on propagation estimates (real principal type, radial estimates for sink and source)  apply verbatim to any operator $-ihY+Q$ with $Q\in \Psi_{h}^{0}(\mc{M};\mc{L}^{n(h)})$ and $\iota_Y\Omega_\nabla=0$, i.e., with no changes to the proofs. Here all difficulties arising from the modified symplectic form do not arise due to the fact that the Hamilton field is the same as in \cite{dyatlovzworskibook}.

\subsection{Resonances and resonant states on the line bundles $\mc{L}^n$}\label{sec:sobG}

By the works \cite{FS11}, \cite{DZ16} (see for example \cite{DG16} for the case of general bundles), the operator ${\bf X}+\la: C^\infty(\mc{M};\mc{L}^n)\to C^\infty(\mc{M};\mc{L}^n)$ can be made Fredholm on some anisotropic Sobolev spaces, implying that the resolvent $R_n(\la)$ admits a meromorphic extension to $\cc$. Let us briefly recall these results, and in particular the definition of the anisotropic Sobolev spaces. 
By \cite[Lemma 1.2]{FS11}, there are functions $m,F\in C^\infty(T^*\mc{M})$ with $m$ (resp.\ $F$) homogeneous of degree $0$ (resp.\ $1$) for $|\xi|>r$ (for some large enough $r>0$), $F>0$ such that, if  $G:=m\log(F)$ and if 
$H^{\omega_0}_p$ is the Hamiltonian vector field of $p(x,\xi)=\xi(X)$ with respect to the standard symplectic form ${\omega_0}$ on $T^\ast \M$, we have for all $\xi$ with $|\xi|>r$
\[
H^{\omega_0}_pm(x,\xi) \leq 0, \qquad H^{\omega_0}_p G(x,\xi)\leq 0,\qquad 
m(x,\xi)=\begin{cases}
1 \quad & \textrm{ for }\xi \textrm{ near }E_s^*,\\
-1 & \textrm{ for }\xi \textrm{ near }E_u^*. \end{cases}
\]  
Now, we take $D:=\{1/n\, |\, n\in \nn^*\}\cup\{2\}$ and define $n(h):=1/h$ for $h\in D\cap (0,1]$ and $n(2):=0$. We can then use the microlocal quantization map $\Op_{h}=\Op_{h,n(h)}$ from \eqref{eq:Oph} to associate to every appropriate symbol function $a\in\Cinft(T^\ast \M)$ a pseudodifferential operator $\Op_{h}(a):\D'(\M;\mc{L}^{n(h)})\to \D'(\M;\mc{L}^{n(h)})$. Similarly, to deal with sections of $\mc{L}^{-n}$ for $n\in\nn^*$, we shall simply replace $\mc{L}$ by its inverse (dual) bundle $\mc{L}^{-1}$. 
Let us introduce for $N>0$ the operator $$A^N_{h,n(h)}:=\Op_{h}(e^{NG}):\D'(\M;\mc{L}^{n(h)})\to \D'(\M;\mc{L}^{n(h)}),$$ where $G$ as above can be chosen so that $A^N_{h,n(h)}:\Cinft(\M;\mc{L}^{n(h)})\to \Cinft(\M;\mc{L}^{n(h)})$ is invertible for all $h\in D$. Indeed, the composition formula \eqref{eq:compositionformula} and the norm bound \eqref{eq:normbound} together imply for all small enough $h\in D$ that $A^N_{h,n(h)}$ is invertible with 
\bq
 (A^N_{h,n(h)})^{-1}=\Op_{h,n(h)}(e^{-NG})(I+R(h))^{-1}\label{eq:Ahinverse}
\eq
for some operator $R(h)\in \Psi_h^{-1}(\mc{M};\L^{n(h)})\subset \Psi_h^{0}(\mc{M};\L^{n(h)})$ with 
\bq
\norm{R(h)}_{{L}^2(\M;\mc{L}^{n(h)})\to {L}^2(\M;\mc{L}^{n(h)})}\leq 1/2,\label{eq:Rhnorm}
\eq 
and using a rescaling $G(x,\eps \xi)$ for $\eps>0$ small enough we can arrange for the above to actually hold for all $h\in D$. 
For each $N>0$ and $h\in D$ we then define the anisotropic Sobolev space 
\bq
\mathcal H_{h}^{Nm}(\mc{M};\mc{L}^{n(h)}):=(A^{N}_{h,n(h)})^{-1}({L}^2(\M;\mc{L}^{n(h)})),\label{eq:defsemiclsobolev}
\eq
also denoted $\mathcal H_h^{Nm}$ for notational simplicity, which is equipped with the norm
\[
\norm{f}_{\mathcal H_h^{Nm}}:=\norm{A^{N}_{h,n(h)}f}_{{L}^2(\M;\mc{L}^{n(h)})}.
\]

\begin{lemma}\label{FaureSjostrand}
For each $N>0$, the operator ${\bf X}=\nabla^{n(h)}_{X}:\D^{Nm}_{h}\to \mathcal H_{h}^{Nm}$  is a closed unbounded operator on the domain
 $\D^{Nm}_h:=\{f\in\mathcal H_{h}^{Nm}\, |\, {\bf X} f\in \mathcal H_{h}^{Nm}\}$ 
and ${\bf X}+\lambda: \D^{Nm}_{h}\to \mathcal H_{h}^{Nm}$ is Fredholm for all $\lambda\in \C$ with $\Re \lambda >-N$ and all $h\in D$. 
\end{lemma}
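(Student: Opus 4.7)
The plan is to adapt the classical Faure--Sj\"ostrand/Dyatlov--Zworski construction of anisotropic Sobolev spaces for Anosov flows to the line bundle setting developed in Section~\ref{sec:calculus}. Closedness of $\mathbf{X}$ on $\mathcal{D}^{Nm}_h$ is immediate from the definition: $\mathbf{X}$ is a first-order differential operator acting continuously on $\mathcal{D}'(\mathcal{M};\mathcal{L}^{n(h)})$, and $\mathcal{D}^{Nm}_h$ is the maximal domain on which $\mathbf{X}$ lands back in $\mathcal{H}^{Nm}_h$.

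For the Fredholm statement I would conjugate by the elliptic weight $A:=A^{N}_{h,n(h)}=\Op_h(e^{NG})$ in order to reduce the problem to $L^2(\mathcal{M};\mathcal{L}^{n(h)})$. Setting
\[
P(\lambda):=A\,(\mathbf{X}+\lambda)\,A^{-1}=\mathbf{X}+\lambda+[A,\mathbf{X}]\,A^{-1},
\]
the Fredholmness of $\mathbf{X}+\lambda:\mathcal{D}^{Nm}_h\to\mathcal{H}^{Nm}_h$ is equivalent to that of $P(\lambda)$ on the analogous $L^2$-based domain. By Example~\ref{ex:Ahcovderiv}, $-ih\mathbf{X}\in\Psi^1_h(\mathcal{M};\mathcal{L}^{n(h)})$ has principal symbol $p(x,\xi)=\xi(X(x))$. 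Combining the commutator formula \eqref{eq:commpoisson}, the inverse formula \eqref{eq:Ahinverse} for $A^{-1}$, and Lemma~\ref{lem:Hp} (whose hypothesis $\iota_X\Omega_\nabla=0$ is the one stated right after \eqref{eq:nabla}), the operator $[A,\mathbf{X}]A^{-1}$ lies in $\Psi^0_h$ with principal symbol $-N\,H^{\omega_0}_p G$, which is nonnegative outside a compact subset of $T^\ast\mathcal{M}$ by the defining property $H^{\omega_0}_p G\leq 0$ of $G=m\log F$.

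The Fredholm property of $P(\lambda)$ on $L^2$ is then obtained by the standard microlocal three-step scheme. As emphasised at the end of Section~\ref{sec:calculus}, the elliptic, real-principal-type propagation, and radial source/sink estimates of \cite[Section E.4]{dyatlovzworskibook} apply verbatim to operators of the form $-ihY+R$ with $R\in\Psi^0_h$ and $\iota_Y\Omega_\nabla=0$, precisely because of the identity $H^{\omega_{hn(h)}}_p=H^{\omega_0}_p$ from Lemma~\ref{lem:Hp}. I would: (i) use microlocal ellipticity of $P(\lambda)$ outside the characteristic set $\Sigma=\{p=0\}$; (ii) propagate control along the Hamilton flow of $H^{\omega_0}_p$ inside $\Sigma$ between the source $E_u^\ast$ and the sink $E_s^\ast$; (iii) apply radial source/sink estimates at $E_u^\ast$ and $E_s^\ast$, whose threshold inequalities, given the $\pm N$ contribution of the anisotropic weight $G$ to the effective subprincipal symbol of $P(\lambda)$, hold precisely when $\Re\lambda>-N$. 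Assembling these estimates and their analogues for the formal adjoint $P(\lambda)^\ast$ (in which the weight is effectively replaced by $-G$, swapping source and sink while leaving the region $\Re\lambda>-N$ invariant under the substitution $\lambda\mapsto\bar\lambda$) yields inequalities of the form
\[
\|u\|_{L^2}\leq C\bigl(\|P(\lambda)u\|_{L^2}+\|u\|_{H^{-1}_h}\bigr)
\]
and its adjoint version, from which Fredholmness of $P(\lambda)$ on $L^2$ follows by standard functional analysis. Conjugation by $A^{-1}$ then transports the Fredholm property to $\mathbf{X}+\lambda:\mathcal{D}^{Nm}_h\to\mathcal{H}^{Nm}_h$.

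The point needing the most care is that the modified symplectic structure $\omega_{hn(h)}$ coming from the line bundle connection does not disrupt the classical Faure--Sj\"ostrand scheme. Lemma~\ref{lem:Hp}, which rests on the curvature identity $\iota_X\Omega_\nabla=0$, handles this uniformly in $h\in D$: the Hamilton flow driving every propagation and radial estimate is the usual geodesic-lift flow regardless of $h$, so the escape function $G$ furnished by \cite[Lem.~1.2]{FS11} continues to generate the positive commutator underpinning the radial estimates, uniformly in the parameter range~\eqref{eq:fundprop}.
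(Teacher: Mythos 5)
Your overall strategy matches the paper's: both follow Faure--Sj\"ostrand, conjugating $\Xbf$ by the anisotropic weight $A^N_{h,n(h)}$ and running the elliptic/propagation/radial-source-and-sink scheme, and your observation that the modified symplectic form $\omega_{hn(h)}$ causes no trouble (via Lemma \ref{lem:Hp}) is the right way to handle the line-bundle aspect. The closedness argument and the computation $\sigma_h\bigl([A,\Xbf]A^{-1}\bigr)=[-N\,H^{\omega_0}_{p_0}G]$ are also correct.

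However, you gloss over the one step that the paper identifies as the only non-transparent point, and it is a genuine gap. The Faure--Sj\"ostrand construction of $G=m\log F$ gives, for a general Anosov flow, a threshold of the form $\Re\lambda>-cN$ for an unspecified constant $c>0$ depending on the minimal contraction/expansion rates of the flow. Your proposal simply asserts that the radial-estimate thresholds ``hold precisely when $\Re\lambda>-N$'', i.e.\ silently takes $c=1$. That is false for a general Anosov flow: what the threshold inequality actually produces is a bound in terms of $N\cdot\inf_{\text{near }E_{u/s}^*}|H^{\omega_0}_{p_0}\log F|$, and this infimum is controlled by $\gamma_{\mathrm{min}}$ from \eqref{eq:minexpansionrate}. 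The fact that one can take $c=1$ is precisely the content of the paper's proof: since ${\bf M}$ has constant curvature $-1$, the contraction/dilation exponents of the geodesic flow all equal $1$, and tracing the constants in \cite[Lemma 1.2, (2.5)--(2.7), Lemmas 3.4, 3.5]{FS11} shows $H^{\omega_0}_{p_0}G_m\leq -1+\mc{O}(\eps)$ outside $\tilde N_0$, which yields Fredholmness for $\Re\lambda>-N(1-\eps)$ for every $\eps>0$. Without this curvature-specific computation your argument only proves Fredholmness on $\Re\lambda>-cN$ for some $c>0$, which is strictly weaker than the stated lemma and is not enough for Theorem \ref{prop:gap}, where the constant $1$ in the width of the spectral strip is the whole point.
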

\begin{proof} The proof is contained in Faure-Sj\"ostrand \cite{FS11}, the only non transparent fact is that the property holds in $\Re \lambda >-N$ (\cite{FS11} claim that there exists $c>0$ so that  the result holds for ${\rm Re}(\la)>-cN$). However since the curvature is $-1$, the contraction/dilation exponents of the geodesic flow are equal to $1$, so it follows readily from the proof of Faure-Sj\"ostrand \cite{FS11} that we can choose $c=1$. Indeed, take the proof of  \cite[Lemma 1.2]{FS11} and its notation, except for the Hamilton field which is denoted ${\bf X}$ in \cite{FS11} and that we denote by $H^{\omega_0}_{p}$: choose $u=-1,s=1$, note that $\tilde{m}=1-\mc{O}(\eps)$ in $\tilde{N}_s$ and 
 $\tilde{m}=-1+\mc{O}(\eps)$ in $\tilde{N}_u$ in \cite[(2.5) and (2.6)]{FS11}, the constant $C$ equals $1$ in \cite[(2.5) and (2.6)]{FS11}, implying that $H^{\omega_0}_{p}G_m\leq -1+\mc{O}(\eps)$ outside $\tilde{N}_0$. Now in \cite[Section 3.2]{FS11}, the operator $\hat{P}$ of \cite{FS11} is replaced by 
$\hat{P}:=(A^N_{h,n(h)})(-ih{\bf X})(A^N_{h,n(h)})^{-1}$ in our case.
Since our operator $\nabla_X$ is skew-adjoint on $L^2$, the principal symbol 
$P_2$ of $\frac{i}{2h}(\hat{P}^*-\hat{P})$ is given (see \cite[Lemma 5.3]{FS11}) by 
\[P_2=H_{p_0}^{\omega_0}(G_m).\]
Applying the arguments of Section 3.3. of \cite{FS11} (with the additional input of the semiclassical parameter) we obtain \cite[Lemma 3.4 and Lemma 3.5]{FS11} with the constants $C=0$ and $C_m=N(1-\eps)$ and $z=h\lambda$. This means that $(\hat{P}-ih\lambda)=-ih({\bf X}+\la)$ is Fredholm in ${\rm Re}(\la)>-N(1-\eps)$ and its resolvent is meromorphic there. Since $\eps>0$ can be made arbitrarily small, this yields the result.
\end{proof}
Since $A^N_{h,n(h)}$  is invertible on $\Cinft(\M;\mc{L}^{n(h)})$, each $\mathcal H_h^{Nm}$ contains $\Cinft(\M;\mc{L}^{n(h)})$, which leads to the following meromorphic extension result: 
\begin{prop}\label{prop:merom_resolvent}
The resolvent $R_{n}(\lambda)$, defined for each $n\in \Z$ and $\Re\lambda>0$ by \eqref{resolventff} and \eqref{sumRn},  has a meromorphic continuation to $\C$ as 
a family of continuous operators
\[ 
 R_{n}(\lambda): \Cinft(\M;\mc{L}^n) \to \mathcal D'(\M; \mc{L}^n).
\]
Given a pole $\lambda_0$ of order $J$, the resolvent takes the form
\bq
 R_{n}(\lambda) = R^H_{n}(\lambda) -\sum_{j=1}^J\frac{(-{\bf X}-\lambda_0)^{j-1}\Pi_{n}^{\lambda_0}}{(\lambda-\lambda_0)^j},\label{eq:presentationRn}
\eq
where $R^H_{n}(\lambda):\Cinft(\M;\mc{L}^n) \to \mathcal D'(\M; \mc{L}^n)$ is 
a holomorphic family of continuous operators and 
$\Pi_{n}^{\lambda_0}:\Cinft(\M;\mc{L}^n) \to \mathcal D'(\M; \mc{L}^n)$ is a finite 
rank operator. Furthermore, the image of the residue operator is given by\footnote{Here $\textup{WF}(s)$ is the wave front set of the distributional section $s$, which microlocally describes the directions in which $s$ is singular. See \cite[Appendix C]{kuester-weich18} for details about wave front sets of distributional sections of vector bundles.} 
\bq\label{eq:range_pi0}
 {\rm Ran}(\Pi_{n}^{\lambda_0}) = \{u\in\mathcal D'(\M; \mc{L}^n)\,|\, ({\bf X}+ \lambda_0)^Ju=0, 
 \textup{WF}(u)\subset E_u^*\}.
\eq
Conversely, if for some $\lambda_0\in\C$ there is $u\in\mathcal D'(\M; \L^n)\setminus\{0\}$
such that $\tu{WF(u)}\subset E_u^*$ and
$(\Xbf+\lambda_0)^ku=0$ for some $k\in \nn^*$, then $\lambda_0$ is a pole of $R_{n}(\lambda)$ 
and $u\in {\rm Ran}(\Pi^{\lambda_0}_{n})$.
\end{prop}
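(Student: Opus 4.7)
The plan is to derive the statement from Lemma~\ref{FaureSjostrand} via analytic Fredholm theory on the scale of anisotropic Sobolev spaces $\mathcal{H}^{Nm}_h$, combined with microlocal propagation estimates to characterize the residues. The overall scheme parallels that of \cite{FS11,DZ16,DG16}; the only new ingredient is the line-bundle-valued pseudodifferential calculus developed in Section~\ref{sec:calculus}.

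Fix $N>0$ and $h\in D$. By Lemma~\ref{FaureSjostrand}, $\mathbf{X}+\lambda:\mathcal{D}^{Nm}_h \to \mathcal{H}^{Nm}_h$ is Fredholm of index zero in the half-plane $\{\Re\lambda>-N\}$. For $\Re\lambda>0$, the formula \eqref{resolventff} together with \eqref{eq:resnormL2} provides an $L^2_n$-bounded inverse of $\mathbf{X}+\lambda$, which must coincide with any Fredholm parametrix on the dense subspace $\Cinft(\mathcal{M};\mathcal{L}^n)$. The analytic Fredholm theorem then produces a meromorphic extension $R_n(\lambda):\mathcal{H}^{Nm}_h \to \mathcal{H}^{Nm}_h$ to $\{\Re\lambda>-N\}$. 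Composing with the continuous inclusions $\Cinft(\mathcal{M};\mathcal{L}^n) \hookrightarrow \mathcal{H}^{Nm}_h \hookrightarrow \mathcal{D}'(\mathcal{M};\mathcal{L}^n)$ yields the claimed meromorphic family $\Cinft\to\mathcal{D}'$. Independence of $N$ (and of $h$) follows from uniqueness of analytic continuation on the common domain $\Re\lambda>0$, and letting $N\to\infty$ exhausts $\C$.

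The Laurent expansion \eqref{eq:presentationRn} near a pole $\lambda_0$ of order $J$ is the standard Fredholm one, with $\Pi_n^{\lambda_0}$ the Riesz spectral projector onto the generalized $\lambda_0$-eigenspace of $-\mathbf{X}$ on $\mathcal{H}^{Nm}_h$; this space is finite dimensional because $\mathbf{X}+\lambda_0$ is Fredholm. For the wavefront set characterization \eqref{eq:range_pi0}, the inclusion ``$\subset$'' goes as follows. Any $u\in\mathrm{Ran}(\Pi_n^{\lambda_0})$ lies in $\mathcal{H}^{Nm}_h$ and solves $(\mathbf{X}+\lambda_0)^J u=0$. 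Since the order function $m$ equals $+1$ near $E_s^*$, equals $-1$ near $E_u^*$, and is non-increasing along the Hamilton flow of $p(x,\xi)=\xi(X)$, the radial-point estimate at the sink $E_s^*$ combined with propagation of singularities along this flow forces $u$ to be smooth on $T^*\mathcal{M}\setminus E_u^*$. Conversely, given $u\in\mathcal{D}'(\mathcal{M};\mathcal{L}^n)$ with $\mathrm{WF}(u)\subset E_u^*$ and $(\mathbf{X}+\lambda_0)^k u=0$, the singularities of $u$ lie exclusively in the region where $m=-1$, so for $N$ large enough one has $u\in \mathcal{H}^{Nm}_h$. Thus $u$ is a non-trivial generalized eigenelement of $-\mathbf{X}+\lambda_0$ on this space, and $\lambda_0$ must be a pole of $R_n(\lambda)$ with $u\in \mathrm{Ran}(\Pi_n^{\lambda_0})$.

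The main technical point is transferring the radial-point and real-principal-type propagation estimates of \cite[Section E.4]{dyatlovzworskibook} to the present line-bundle setting with its modified symplectic form $\omega_{hn(h)}$. This is the transfer already anticipated in Section~\ref{sec:calculus}: by Lemma~\ref{lem:Hp} applied to $Y=X$, once the curvature condition $\iota_X\Omega_\nabla=0$ (announced as Lemma~\ref{lem:checkcondition} in Section~\ref{sec:covariant}) is granted, the Hamilton vector field of $p$ with respect to $\omega_{hn(h)}$ coincides with the one for $\omega_0$, and the scalar-case microlocal arguments carry over without modification.
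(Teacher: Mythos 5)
Your proposal is correct and follows essentially the route the paper takes (which consists largely of citations to \cite{FS11} for Fredholmness and meromorphic continuation and to \cite{DG16} for the Laurent structure and wavefront-set characterization of residues): Fredholmness from Lemma~\ref{FaureSjostrand} plus analytic Fredholm theory, Riesz projectors for \eqref{eq:presentationRn}, and radial plus propagation estimates together with the anisotropy of $m$ for \eqref{eq:range_pi0}. One slip of terminology: in the convention of the paper and of \cite{dyatlovzworskibook}, $E_s^*$ is the radial \emph{source} for the lifted flow $\Phi_t$ (a repulsor, since $E_s$ is contracted in forward time) and $E_u^*$ is the radial \emph{sink}, so the high-regularity estimate you invoke to control a resonant state microlocally is at the source $E_s^*$, not a ``sink''; your conclusion $\mathrm{WF}(u)\subset E_u^*$ is unaffected.
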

\begin{proof}
As explained in Lemma \ref{FaureSjostrand}, the proof is contained in \cite{FS11}. We also refer to 
\cite[eq (0.12)]{DG16} for the characterization of $\textup{Im}(\Pi_{\lambda_0})$ in \eqref{eq:range_pi0} and \cite[eq (3.44), (3.55)]{DG16} for the structure of the resolvent in a neighborhood of a pole.
\end{proof}

\begin{definition}\label{def:resonances}
 We call a pole of $R_{n}(\la)$ a \emph{(Pollicott-Ruelle) resonance on $\L^n$}. We write $\sigma^{\rm PR}_n$ for the set of all Pollicott-Ruelle resonances on $\L^n$ and we call $\sigma^{\rm PR}_n$ the \emph{resonance spectrum on $\L^n$}.  For  
 $\lambda\in \C$ we call 
 \[
  \Res_{n}(\lambda) :=\{u\in\mathcal D'(\M; \mc{L}^n)\,|\,
  ({\bf X}+\lambda)u=0, \;\tu{WF}(u)\subset E_u^*\}
 \]
 the space of \emph{Pollicott-Ruelle resonant states on $\L^n$} and for $k\in\nn^*$
 \[
  {\Res_{n}}(\lambda)^k:=\{u\in \mathcal D'(\M;\L^n)\,|\, 
  ({\bf X}+\lambda)^k u=0, \;\tu{WF}(u)\subset E_u^*\}
 \]
the space of \emph{generalized Pollicott-Ruelle resonant states on $\L^n$} of rank $k$. 
\end{definition}
\begin{rem}\label{rem:resprop}
\begin{enumerate}[leftmargin=*]
 \item  By Proposition~\ref{prop:merom_resolvent}, $\lambda\in \C$ is a Pollicott-Ruelle resonance on $\L^n$  
 iff $\Res_{n}(\lambda)\neq 0$.
 \item If $J$ is such that 
 ${\Res_{n}}(\lambda)^{J-1} \subsetneq {\Res_{n}}(\lambda)^J
 = {\Res_{n}}(\lambda)^{J+1}$, then the resolvent has a pole of order $J$.
In this case, there are distributional sections $u_1,\ldots, u_J$, $u_k\in {\Res_{n}}(\lambda)^{k}\setminus\{0\}$, 
such that $u_k=({\bf X}+\lambda)u_{k+1}$. We then say that $\lambda$ lies in a 
\emph{Jordan block of size $J$}. 
\item For any $\lambda\in\C$ with $\Re(\lambda)>0$ we know that $R_{n}(\cdot)$ is
holomorphic in a neighborhood of $\lambda$ and conclude $\Res_{n}(\lambda)=\{0\}$.
\end{enumerate}
\end{rem}

\section{Semiclassical resolvent estimates}\label{sec:resolvestim}

As shown in Appendix \ref{sec:appendix},  the union $\bigcup_{n\in\Z}\sigma^{\rm PR}_n$ of all individual line bundle resonance spectra intersects the region $\{\Re \lambda>-1\}$ only in finitely many resonances. In   view of this  and \eqref{sumRn}, it is natural to ask whether the  resolvent $R(\lambda)=(X+\lambda)^{-1}$ of the frame flow, defined by \eqref{resolventff} as an operator $L^2(F{\bf M})\to L^2(F{\bf M})$ in the region $\{\Re \lambda>0\}$, continues meromorphically to $\{\Re \lambda>-1\}$ as an operator between appropriate Hilbert spaces. This is what we shall prove in this section. In the process, we will obtain resolvent estimates for the geodesic flow acting on the line bundles $\L^{n}$, $n\in \Z$, over $\mc{M}=S{\bf M}$, where ${\bf M}=\gam \H^3$.  That ${\bf M}$ has this particular form will be used only in Section \ref{sec:gap}, whereas Sections \ref{sec:semiclformulation} and  \ref{sec:semiclmeasures} are formulated in a more general context.  
 
\subsection{Definition and properties of the required Hilbert  spaces}\label{sec:hilbertconstr}
In the following we use the families $\{\mathcal H^{Nm}_{1/n}\}_{n\in \Z\setminus\{0\}}$, $N>0$, of anisotropic Sobolev spaces from Section \ref{sec:sobG} to define useful Hilbert spaces on $F{\bf M}$. First, we define for $\varphi\in\Cinft(F{\bf M})$ and $n\in \Z$ the $n$-th Fourier mode $\varphi_n\in C^\infty_n(F{\bf M}):= \Cinft(F{\bf M})\cap  \ker(R-in)$ by
\bq
\varphi_n(\Gamma g):=\int_{ {\rm SO}(2)}\varrho_n(k^{-1}_0)\varphi(\Gamma g k_0 )\d M(k_0),\quad\Gamma g\in \gam G=F{\bf M}.\label{eq:Fouriermode}
\eq
Here $dM$ is the Haar measure on $M={\rm SO}(2)$ fixed by our chosen inner product on $\g\supset\m=T_eM$.  
The map \eqref{eq:Fouriermode} induces the orthogonal projection $L^2(F{\bf M})\to  L^2(F{\bf M})\cap\ker(R-in)$ by continuous extension; in particular, one has $\varphi=\sum_{n\in \Z}\varphi_n$ in $L^2(F{\bf M})$. Moreover, due to the fact that $M$ is Abelian, one has $(R^k\varphi)_n=R^k\varphi_n=(in)^k \varphi_n$ for all $n\in \Z$, $k\in \N_0$, and using this it is not difficult to see that $\varphi=\sum_{n\in \Z}\varphi_n$ in $\Cinft(F{\bf M})$. 
Dually, we define for  $f\in \D'(F{\bf M})$ the $n$-th Fourier mode $f_n\in \D'(F{\bf M})\cap \ker(R-in)$ by
\bqn
f_n(\varphi):=f(\varphi_{-n}),\qquad \forall\; \varphi\in \Cinft(F{\bf M}),
\eqn
so that the convergence $\varphi=\sum_{n\in \Z}\varphi_n$ in $\D(F{\bf M})$ for each $\varphi \in\Cinft(F{\bf M})$ implies 
\bq
f=\sum_{n\in \Z}f_n \qquad \text{in }\D'(F{\bf M}).\label{eq:f_nconv}
\eq
We can consider $f_n$ naturally as a distributional section of the line bundle $\L^{n}$ over $\M=S{\bf M}$. Let $h(n)=1/|n|$ if $n\not=0$ and $h(0)=2$, and we define for each $N>0$ and $k\in \R$ the Hilbert space 
\[\begin{gathered}
\mathcal H^{Nm,k}(F{\bf M}):=\big\{f\in \D'(F{\bf M})\,|\,f_n\in \mathcal H^{Nm}_{h(n)}(\mc{M};\mc{L}^n)\;\forall\;n\in \Z,\; \sum_{n\in \Z}\cjg n\cjd^{2k}\norm{f_n}_{\mathcal H^{Nm}_{h(n)}}^2<\infty\big\},\\ 
\norm{f}_{\mathcal H^{Nm,k}(F{\bf M})}^2:=\sum_{n\in \Z}\cjg n\cjd^{2k}\norm{f_n}_{\mathcal H^{Nm}_{h(n)}}^2,
\end{gathered}\]
with $\mathcal H^{Nm}_{h(n)}(\mc{M};\mc{L}^n)$ as in Section \ref{sec:sobG}. Note that $\mathcal H^{Nm,k}(F{\bf M})$ is a Hilbert space.  A basic observation is the following:
\begin{lemma}\label{lem:Cinftinclusion}
For all $N>0$ and $k\in\Z$, the following inclusion holds
$$
\Cinft(F{\bf M})\subset \mathcal H^{Nm,k}(F{\bf M}).
$$
Moreover, the inclusion map is continuous.
\end{lemma}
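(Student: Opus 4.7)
My plan is to combine rapid decay of Fourier modes with a polynomial bound on the anisotropic semiclassical norm. For $f\in \Cinft(F{\bf M})$ the Fourier modes $f_n$ decay in $n$ faster than any polynomial in the $\Cinft$-topology, while $\norm{f_n}_{\mathcal H^{Nm}_{h(n)}}$ is controlled, uniformly in $n$, by a fixed ordinary Sobolev norm of $f_n$ on $F{\bf M}$. Matching the rapid decay against the polynomial weight $\eklm{n}^{2k}$ in the definition of $\mathcal H^{Nm,k}$ then forces the series in the norm to converge with an estimate by a fixed $H^s$-norm of $f$, which simultaneously gives both the inclusion and its continuity.

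First I would establish the rapid decay. The projector $\varphi\mapsto \varphi_n$ defined in \eqref{eq:Fouriermode} is the orthogonal projection onto the $in$-eigenspace of the self-adjoint operator $-iR$, so it is an $L^2$-contraction and is uniformly (in $n$) $H^s(F{\bf M})$-bounded for every $s$. The identity $(in)^\ell f_n = (R^\ell f)_n$ then yields, for all $s,\ell\in\N_0$ and $n\in\Z$,
\bqn
\norm{f_n}_{H^s(F{\bf M})} \leq C_{s,\ell}\,\eklm{n}^{-\ell}\,\norm{f}_{H^{s+\ell}(F{\bf M})}.
\eqn

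Next I would bound $\norm{f_n}_{\mathcal H^{Nm}_{h(n)}} = \norm{A^N_{h(n),n}\, f_n}_{L^2}$. Since $e^{NG}$ with $G=m\log F$ lies in the classical symbol class $S^{NM_0}(\M)$ for $M_0:=\sup|m|$, the operator $A^N_{h,n(h)}=\Op_h(e^{NG})$ belongs to $\Psi^{NM_0}_h(\M;\mathcal{L}^{n(h)})$. Using a semiclassical elliptic parametrix for $(1+h^2\Delta_{n(h)})^{M_1/2}$ with any fixed integer $M_1\geq NM_0$, together with the uniform-in-$h$ $L^2$-bound \eqref{eq:normbound} for order-zero operators, there exists a constant $C_N>0$ and an integer $M_1$ depending only on $N$ such that
\bqn
\norm{A^N_{h,n(h)} u}_{L^2(\M;\mathcal{L}^{n(h)})} \leq C_N\,\norm{u}_{H^{M_1}_h(\M;\mathcal{L}^{n(h)})} \quad \forall\, u\in\Cinft(\M;\mathcal{L}^{n(h)}),\ h\in D.
\eqn
Since $h(n)\leq 1$, each factor $h^j$ arising in the expansion of the semiclassical norm $\norm{\cdot}_{H^{M_1}_{h(n)}}$ is bounded by $1$, and each horizontal covariant derivative on $\mathcal{L}^n$ corresponds to a left-invariant horizontal vector field acting on the equivariant lift of $f_n$ to $F{\bf M}$; this yields $\norm{f_n}_{H^{M_1}_{h(n)}(\mathcal{L}^n)} \leq C\,\norm{f_n}_{H^{M_1}(F{\bf M})}$ uniformly in $n$, with no positive power of $n$ creeping in.

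Combining the two bounds with $s=M_1$ and $\ell=\lceil|k|\rceil+1$ gives
\bqn
\norm{f_n}_{\mathcal H^{Nm}_{h(n)}} \leq C_{N,k}\,\eklm{n}^{-\ell}\,\norm{f}_{H^{M_1+\ell}(F{\bf M})} \quad \forall\, n\in\Z,
\eqn
so that $\sum_{n\in\Z}\eklm{n}^{2k}\norm{f_n}^2_{\mathcal H^{Nm}_{h(n)}}\leq C_{N,k}\,\norm{f}^2_{H^{M_1+\ell}(F{\bf M})}$ is finite. On the closed manifold $F{\bf M}$, convergence in $\Cinft$ implies convergence in every $H^s$, so this estimate gives both the inclusion and the continuity of $\Cinft(F{\bf M})\hookrightarrow \mathcal H^{Nm,k}(F{\bf M})$. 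The main delicate point will be the uniform-in-$n$ operator bound for $A^N_{h,n(h)}$ in the bundle-valued semiclassical calculus of Section \ref{sec:calculus}: I must check that the parametrix construction is compatible with the modified-symbol convention (in which the $n$-dependence is absorbed into the shift $\xi\mapsto \xi-hn(h)\beta$) and that the comparison between semiclassical Sobolev norms on $\mathcal{L}^n$ and standard Sobolev norms on $F{\bf M}$ uses the scaling $h(n)=1/|n|$ in a way that produces no stray positive powers of $n$.
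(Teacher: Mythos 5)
Your proof is correct and follows essentially the same strategy as the paper's: both reduce the anisotropic semiclassical norm $\norm{f_n}_{\mathcal H^{Nm}_{h(n)}}$ to an $h$-independent Sobolev norm of $f_n$ on $F{\bf M}$ via the uniform-in-$n$ $L^2$-boundedness of the order-zero operator $A^N_{h,n(h)}(1+h^2\Delta_{n(h)})^{-M_1/2}$, then resum over $n$. The only cosmetic difference is that the paper packages the summation step into the single Parseval identity \eqref{eq:sumf_n} applied to $R^k(I+\Delta)^{(N+1)/2}f$, whereas you split it into a separate rapid-decay estimate in $n$; these two bookkeeping devices are interchangeable.
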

\begin{proof}Let $N>0$. 
Identifying $\Cinft(\M;\L^{n})=C^\infty_n(F{\bf M})$ for $n\in \Z$ (and similarly for $L^2$ and distributional sections), recall from Section \ref{sec:sobG} the definition of the anisotropic Sobolev spaces $\mathcal H^{Nm}_{h(n)}$ using the pseudodifferential operators $A^{N}_{h(n),n}$. For the case $n\not=0,$ we use the semiclassical calculus from Section \ref{sec:calculus} with $D:=\{1/n\,|\,n\in \nn^*\}\cup \{2\}$ and the two tensor power maps $n(h):=\pm 1/h$, $h\in D\setminus \{2\}$, while $n(2):=0$. Then $A^N_{h,n(h)}\in \Psi_{h}^{Nm}(\M;\L^{n(h)})\subset\Psi_{h}^{N+\delta}(\M;\L^{n(h)})$, where $\delta >0$ is arbitrarily small. The Laplacian $\Delta:\Cinft(F{\bf M})\to  \Cinft(F{\bf M})$ of the Riemannian metric on $F{\bf M}=\Gamma\backslash G$ induced by \eqref{laplacien} commutes with $R$ and thus  induces for each $n\in \Z$ an operator $\Delta_n:C^\infty_n(F{\bf M})\to  C^\infty_n(F{\bf M})$. Recalling the identification $\Cinft(\M;\L^{n})=C^\infty_n(F{\bf M})$, we note that the connection $\nabla^n$ on $\L^n$ induced by the connection $\nabla$ on $\L$ defined in Section \ref{sec:covariant} has the property that $\Delta_{n}=(\nabla^{n})^*\nabla^{n}$. Putting $n=n(h)$, the operator $\Delta_{n(h)}$ is in $\Psi_{h}^2(\M;\mc{L}^{n(h)})$. Given $f\in \Cinft(F{\bf M})$, $n\in \Z$, and $k\in \N_0$, one has
\begin{align}\begin{split}
\norm{R^k(I+\Delta)^{(N+1)/2}f}^2_{L^2(F{\bf M})}
&=\sum_{n\in \Z}n^{2k}\norm{(I+\Delta_n)^{(N+1)/2}f_n}^2_{L^2(\M;\L^{n})},\label{eq:sumf_n}\end{split}
\end{align}
as one easily checks using the facts that $M$ is abelian and that $\Delta$ commutes with the $M$-action on $F{\bf M}$. 
 Each $f_n$ is in $\mathcal H^{Nm}_{h(n)}$ because $\Cinft(\M;\L^{n})\subset\mathcal H^{Nm}_{h(n)}$ and we have for $h\in D$ with $f_h:=f_{n(h)}$ the estimate
 \begin{align*}
 & \norm{f_h}_{\mathcal H^{Nm}_{h}} \equiv \norm{A_{h,n(h)}^N f_h}_{L^2(\M;\L^{n(h)})}\\
  &=\norm{A_{h,n(h)}^N(1+h^2\Delta_{n(h)})^{-(N+1)/2}(1+h^2\Delta_{n(h)})^{(N+1)/2} f_h}_{L^2(\M;\L^{n(h)})}\\
  &\leq \norm{A_{h,n(h)}^N(1+h^2\Delta_{n(h)})^{-(N+1)/2}}_{L^2(\M;\L^{n(h)})\to L^2(\M;\L^{n(h)})}\norm{(1+h^2\Delta_{n(h)})^{(N+1)/2}f_h}_{L^2(\M;\L^{n(h)})} \\
 &\leq C_N \norm{(1+h^2\Delta_{n(h)})^{(N+1)/2}f_h}_{L^2(\M;\L^{n(h)})}\leq C_Nh^{N+1}\norm{(-R^2+\Delta_{n(h)})^{(N+1)/2}f_h}_{L^2(\M;\L^{n(h)})}
\end{align*}
for some $C_N>0$, since $A_{h,n(h)}^N(1+h^2\Delta_{n(h)})^{-(N+1)/2}\in \Psi_h^0(\M;\L^{n(h)})$ is a bounded operator on $L^2(\M;\L^{n(h)})$ with operator norm uniformly bounded  in $h$, see \eqref{eq:normbound}. Combining this with \eqref{eq:sumf_n}, we get  for $k\in \Z$
\[
\sum_{n\in \Z}\cjg n\cjd^{2k}\norm{f_n}_{\mathcal H^{Nm}_{h(n)}}^2
\leq C^2_N \norm{(1-R^2)^k (-R^2+\Delta)^{(N+1)/2}f}^2_{L^2(F{\bf M})}<\infty,
\]
concluding the proof of the inclusion. Continuity follows by estimating the above $L^2$-norm by the supremum norm, using that $F{\bf M}$ is compact.
\end{proof}

The main result of this section is:
\begin{theo}\label{prop:gap} 
Let ${\bf M}=\gam \H^3$ be a compact hyperbolic manifold and $F{\bf M}=\gam G$ its frame bundle, where $G=\mathrm{PSO}(1,3)$.  Then the frame flow resolvent $(X + \lambda)^{-1}$, which for $\Re \lambda>0$ is a holomorphic family of bounded operators $L^2(F{\bf M})\to L^2(F{\bf M})$ defined by  \eqref{resolventff}, extends  for each $N\geq 1$ to the region
$$
\{\Re\lambda >-1\}\subset \C
$$
as a meromorphic family of bounded  operators 
\[R(\lambda):=(-X - \lambda)^{-1}:
\mathcal H^{Nm,1}(F{\bf M})\to \mathcal H^{Nm,0}(F{\bf M}),\]
and the only poles of $R(\lambda)$ in that region are given by the real numbers $\lambda_j:=  \sqrt{1-\nu_j}-1$, $0\leq j\leq J$, where $\nu_0=0,\nu_1,\ldots,\nu_J$ are the eigenvalues of the Laplace-Beltrami operator $\Delta$ on ${\bf M}$  in the interval $[0,1)$. Moreover, for every $\delta,r>0$ there is a constant $C_{\delta,r}>0$ such that for  
$1<|{\rm Im}(\lambda)|$ and $-1+\delta<\Re \lambda <r$, one has the following estimate:
\bq
\norm{R(\lambda)}_{{\mc H}^{Nm,1}\to \mc{H}^{Nm,0}}\leq C_{\delta,r}\eklm{\lambda}^{2N+1}\label{eq:resolvestim}.
\eq 
\end{theo}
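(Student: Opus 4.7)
The plan is to reduce the statement to uniform resolvent bounds on each line bundle $\mathcal{L}^n$ and then to reassemble the pieces via the Fourier decomposition. Using \eqref{sumRn} and \eqref{eq:f_nconv}, any $f\in\mathcal{H}^{Nm,1}(F{\bf M})$ can be written as $f=\sum_{n\in\Z}f_n$ with $f_n$ the $M$-equivariant component of weight $n$, identified with a distributional section of $\mathcal{L}^n$ over $\mc{M}=S{\bf M}$ lying in $\mathcal{H}^{Nm}_{h(n)}(\mc{M};\mathcal{L}^n)$. The operator $R(\lambda)$ then acts termwise as $R_n(\lambda)f_n$, and the continuity and bound \eqref{eq:resolvestim} will follow from a \emph{uniform} polynomial estimate
\begin{equation*}
\norm{R_n(\lambda)}_{\mathcal{H}^{Nm}_{h(n)}\to\mathcal{H}^{Nm}_{h(n)}}\;\leq\;C_{\delta,r}\eklm{\lambda}^{2N+1}\eklm{n}
\end{equation*}
valid for all $n\in\Z$ and all $\lambda$ in the stated region away from the listed real poles; a Cauchy--Schwarz summation against the $\eklm{n}^2$-weight in the source norm then gives \eqref{eq:resolvestim}, and holomorphy of $\sum_n R_n(\lambda)f_n$ as a map to $\mathcal{H}^{Nm,0}$ is automatic since only finitely many $n$ will contribute poles.

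Identifying those poles is the first step, and here I would invoke Appendix \ref{sec:appendix}. Through the canonical embedding $\mathcal I^{|n|}_n:\mathcal{L}^n\hookrightarrow \otimes_{S,0}^{|n|}\mathcal{E}^*$ from \eqref{eq:injections}--\eqref{eq:tracefree}, sections of $\mathcal{L}^n$ are realized as trace-free symmetric tensors on $S{\bf M}$, to which the quantum--classical correspondence of \cite{dfg} applies. The outcome is that for $n\neq 0$ no resonance of $R_n$ lies in $\{\Re\lambda>-1\}$, while for $n=0$ the resonances of $R_0$ in this half-plane are precisely the first-band resonances $\lambda_j=\sqrt{1-\nu_j}-1$, attached to Laplace eigenvalues $\nu_j\in[0,1)$ via the pushforward to the base ${\bf M}$.

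The technical core is the uniform bound on $R_n(\lambda)$. I would apply the semiclassical calculus of Section \ref{sec:calculus} with semiclassical parameter $h=h(n)=1/|n|$ and tensor power map $n(h)=\mathrm{sign}(n)/h$, so that $-{\bf X}-\lambda$ becomes $h^{-1}(-ih\nabla_X^{n}-h\lambda)$, a semiclassical operator in $\Psi_h^{1}(\mc{M};\mathcal{L}^n)$ with modified principal symbol $p_0(x,\xi)=\xi(X(x))$ by Example \ref{ex:Ahcovderiv}. The curvature form $\Omega_\nabla$ will satisfy $\iota_X\Omega_\nabla=0$ (Lemma \ref{lem:checkcondition}), so by Lemma \ref{lem:Hp} the Hamilton field $H_{p_0}^{\omega_{hn(h)}}$ coincides with the geodesic lift $H_{p_0}^{\omega_0}$, independently of $h$. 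Consequently the standard microlocal toolkit applies verbatim and uniformly in $h$: elliptic parametrix off $\{p_0=0\}$, propagation of singularities along the geodesic flow on $T^\ast\mc{M}$, and radial estimates at the source $E_u^\ast$ and sink $E_s^\ast$, with positivity supplied by the escape function $e^{NG}$ defining the anisotropic spaces. A contradiction argument using semiclassical defect measures in the style of Dyatlov \cite{Dyatlov16}, together with the symplectic structure of the trapped set as exploited in Nonnenmacher--Zworski \cite{NZ15} and Faure--Tsujii \cite{FT17a}, then yields the required polynomial bound.

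The hardest aspect is precisely the uniformity in $(n,\lambda)$ jointly: a hypothetical blow-up sequence produces a semiclassical measure supported on the intersection of the trapped set with the characteristic variety, invariant under the geodesic flow, and constrained by the sign of $\Re\lambda$ combined with the unstable contraction rates. In our setting the constant curvature $-1$ forces these rates to equal $1$, so the inequality $\Re\lambda>-1+\delta$ leaves only the margin $\delta$ to close the contradiction; the argument must propagate all of the mass from $E_s^\ast$ to $E_u^\ast$ along the geodesic flow and annihilate it by the radial source/sink estimates, simultaneously over all $n$. This is the genuinely doubly-semiclassical feature, and is the main novelty over the contact Anosov case. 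Once this uniform bound is established and combined with the pole count from Appendix \ref{sec:appendix}, the meromorphic extension of $R(\lambda)$ to $\{\Re\lambda>-1\}$ as a bounded family $\mathcal{H}^{Nm,1}\to\mathcal{H}^{Nm,0}$ together with the estimate \eqref{eq:resolvestim} follows by assembling the Fourier pieces as above.
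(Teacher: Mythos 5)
Your proposal follows the paper's proof essentially verbatim: the Fourier reduction to uniform resolvent bounds on each $\mathcal{L}^n$ (the paper's Proposition~\ref{lem:key}), the identification of poles through the Appendix's zeroth-band analysis of $\mathcal{L}^0$ together with Corollary~\ref{cor:individualgap} and \eqref{eq:resonancesunion}, and the semiclassical contradiction argument via defect measures, radial source/sink estimates, and the symplectic structure of the trapped set, all reduced to a single Assumption and Proposition~\ref{prop:gap2}. The one phrase worth tightening is ``propagate all of the mass from $E_s^\ast$ to $E_u^\ast$ \dots\ and annihilate it by the radial source/sink estimates'': in the paper those estimates and propagation confine the semiclassical measure $\mu$ to $\Gamma_+(\Lambda)$ but do \emph{not} kill the mass near $K_\Lambda$ (Lemma~\ref{supportmu2} shows it is strictly positive there); the contradiction is instead closed by the extra regularity of $\mu$ in the unstable directions supplied by the ladder operators $\eta_\pm$ (Lemma~\ref{lem:nugt1}), combined with the $e^{2\nu t}$-scaling of Lemma~\ref{flowinvmu} and the symplectic transversality relations of Lemma~\ref{lem:Ubf} — precisely the symplectic mechanism you correctly attribute to \cite{NZ15,FT17a}.
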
 
Here $\eklm{\lambda}:=\sqrt{1+|\lambda|^2}$. Theorem \ref{th1intro} is obtained from Theorem \ref{prop:gap}  by taking $N=1$ and using the simplified notation $\mathcal H^{1,1}:=\mathcal H^{m,1}(F{\bf M})$, $\mathcal H^{1,0}:=\mathcal H^{m,0}(F{\bf M})$. 

In view of Proposition \ref{prop:merom_resolvent}, the proof of Theorem \ref{prop:gap} reduces to combining Corollary \ref{cor:individualgap} (for the location of the poles) with the following Proposition (for the resolvent bounds):
\begin{prop}\label{lem:key}
Let $c_0\in (0,1)$ and $c_1>0$. There is a $C_{c_0,c_1}>0$ such that if $N>c_0$
\bq
\norm{R_{0}(\lambda)}_{\mathcal H^{Nm}_{h(0)}\to \mathcal H^{Nm}_{h(0)}}\leq C_{c_0,c_1}\eklm{\lambda}^{2N+1}, \quad \forall\; \lambda \in \C;\; \Re \lambda\in [-c_0,c_1],\; |\Im \lambda|\geq 1 \label{eq:keybound0}
\eq
and for all $n\in \nn^*$ 
\bq
\norm{R_{\pm n}(\lambda)}_{\mathcal H^{Nm}_{h(n)}\to \mathcal H^{Nm}_{h(n)}}\leq C_{c_0,c_1}\cjg n\cjd\eklm{\lambda}^{2N+1}, \quad \forall\; \lambda \in \C;\; \Re \lambda\in [-c_0,c_1].\label{eq:keybound}
\eq
\end{prop}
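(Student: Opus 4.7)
Plan of proof.

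The strategy is a contradiction argument using semiclassical defect measures, in the spirit of \cite{Dyatlov16} but adapted to the present ``doubly semiclassical'' situation in which both $|n|$ and $|\Im \lambda|$ may be large. I describe the case $n\neq 0$ in detail; the case $n=0$ is analogous with effective semiclassical parameter $h=1/|\Im\lambda|\leq 1$ (which is why the condition $|\Im\lambda|\geq 1$ appears in \eqref{eq:keybound0}).

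Fix $h:=h(n)=1/|n|$ and put $P_h:=-ih\nabla^n_X\in \Psi^1_h(\M;\L^n)$. By Example \ref{ex:Ahcovderiv} its modified principal symbol is $p(x,\xi)=\xi(X(x))$, and Lemma \ref{lem:Hp} together with the identity $\iota_X\Omega_\nabla=0$ (to be checked separately, see Lemma \ref{lem:checkcondition}) implies that the Hamilton field $H^{\omega_{hn}}_p$ coincides with the classical $H^{\omega_0}_p$ and lifts the geodesic vector field $X$ on $\M$. The equation $(-{\bf X}-\lambda)u=f$ is then equivalent to $(P_h-z)u=-ihf$ with $z=ih\lambda$, and the anisotropic Sobolev norm is realized as $\|u\|_{\mathcal H_h^{Nm}}=\|A^N_{h,n}u\|_{L^2}$ with $A^N_{h,n}=\Op_h(e^{NG})$ the escape weight from Section \ref{sec:sobG}.

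Suppose for contradiction that \eqref{eq:keybound} fails. Then one extracts sequences $n_k\in \Z\setminus \{0\}$, $\lambda_k\in \C$ with $\Re\lambda_k\in[-c_0,c_1]$, and unit-norm $u_k\in \mathcal H^{Nm}_{h_k}$ satisfying $\langle n_k\rangle\langle\lambda_k\rangle^{2N+1}\|(-{\bf X}-\lambda_k)u_k\|_{\mathcal H^{Nm}_{h_k}}\to 0$. Passing to subsequences we may assume $h_k\lambda_k\to z_\infty\in\overline\C$; when $|\lambda_k|$ is unbounded one first rescales with the effective semiclassical parameter $\tilde h_k:=1/\max(|n_k|,|\Im\lambda_k|)$ so that $\tilde h_k|\lambda_k|$ remains bounded, reducing both regimes to a single setting. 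Conjugating by $A^N_{h_k,n_k}$ turns the problem into an $L^2$-estimate for an operator whose symbol has imaginary part of size $\sim N$, positive near $E_u^\ast$ and negative near $E_s^\ast$. The standard microlocal package of \cite{FS11,DZ16,dyatlovzworskibook} --- radial source estimates at $E_u^\ast$, radial sink estimates at $E_s^\ast$, and real-principal-type propagation along $H_p$ --- localizes the $L^2$-mass of $A^N_{h_k,n_k}u_k$ to a neighborhood of the trapped set $\{p=\Re z_\infty\}\cap\{|\xi|\leq C\}$. Crucially, because of Lemma \ref{lem:Hp} these estimates are uniform in the twist parameter $n_k$, so the same constants work throughout.

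Extracting a semiclassical defect measure $\mu$ on $T^\ast\M$ from $u_k$ then produces a nonnegative Radon measure supported in $\{p=\Re z_\infty\}$ and invariant under the geodesic flow generated by $H_p^{\omega_0}$. The escape function $G$ satisfies $H_p G\leq -1+\mathcal O(\varepsilon)$ outside a small neighborhood of the zero section, the unit constant reflecting the Lyapunov rate of the curvature $-1$ geodesic flow on ${\bf M}=\Gamma\backslash\H^3$. Combined with the spectral hypothesis $\Re\lambda_k>-c_0>-1$, flow-invariance and the damping inequality force $\mu\equiv 0$, contradicting $\|u_k\|_{\mathcal H_{h_k}^{Nm}}=1$. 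The polynomial factor $\langle n\rangle\langle\lambda\rangle^{2N+1}$ is tracked quantitatively through the commutator and propagation losses associated with the weight $e^{NG}$ and the $\lambda$-dependent normalization in the rescaling step. The main obstacle, and the reason the argument succeeds, is precisely the simultaneous control of the two independent large parameters $n$ and $\lambda$: they enter only through the modified symplectic form $\omega_{hn}=\omega_0+ihn\,\pi^\ast\Omega_\nabla$, and Lemma \ref{lem:Hp} decouples them at the level of the Hamilton flow of $p$, so that one classical contradiction argument handles all regimes uniformly.
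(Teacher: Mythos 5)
Your overall framework --- contradiction by semiclassical defect measures, rescaling to absorb $|\lambda|$ into the semiclassical parameter, radial source/sink estimates, uniformity in $n$ via $\iota_X\Omega_\nabla=0$ and Lemma~\ref{lem:Hp} --- matches the structure of Sections \ref{sec:semiclformulation}--\ref{sec:gap}. However, the key final step of your argument is wrong, and the genuinely new idea of the paper is absent.

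You claim that because $H_p G\le -1+\mathcal O(\varepsilon)$ away from the zero section and $\Re\lambda_k>-c_0>-1$, ``flow-invariance and the damping inequality force $\mu\equiv 0$.'' This cannot be right: the escape function $G=m\log F$ of Faure--Sj\"ostrand satisfies $H_pG\le 0$ but $H_pG$ vanishes near the neutral cone $E_0^*$, i.e.\ precisely on the trapped set $K=\Gamma_+\cap\Gamma_-$. Nothing in the escape-function estimate excludes $\mu$ from being concentrated on $K$ (and indeed Lemma~\ref{supportmu2} shows that $\mu$ must have positive mass near $K_\Lambda$). The escape function gives Fredholmness of $\mathbf X+\lambda$ in $\{\Re\lambda>-N\}$, not the location of resonances; if your reasoning were valid, it would falsely imply that $-\mathbf X$ has no resonances at all in $\{\Re\lambda>-1\}$, contradicting the known spectrum on functions (the $n=0$ case has poles at $\lambda=0$ and at the small Laplace eigenvalues).

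What actually forces the contradiction in the paper is a two-step argument that you do not mention. First, Lemma~\ref{lem:Ubf} produces local frames $\mathbf U_j^\pm$ of $E_{s/u}$ whose Hamiltonians $\varphi_j^\pm=\xi(\mathbf U_j^\pm)$ satisfy the Poisson bracket relations \eqref{eq:commq_j}; these show that the trapped set $K_\Lambda$ is a \emph{symplectic} submanifold for the modified form $\omega_{1-|\Lambda|}$ (provided $n_0(1-|\Lambda|)\ne 0$ or $\Lambda\ne 0$). Second, Lemma~\ref{lem:nugt1} uses the ladder-operator relations $\eta_\pm(h\mathbf X+\lambda)=(h\mathbf X+\lambda+h)\eta_\pm$ together with the invertibility of $P_h(\lambda)+h$ (here is where $\nu>-1$ enters) to show that the semiclassical measure $\mu$ is in fact \emph{smooth in the unstable transverse directions} $H^{\omega_{1-|\Lambda|}}_{\varphi_j^-}$. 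Only then, in the proof of Proposition~\ref{prop:gap2}, is the transversality (from the symplectic structure of $K_\Lambda$) combined with the Lipschitz-type bound $\mu(\psi(B_\delta))=\mathcal O(\delta^2)$ and the flow scaling $(\Phi_t)_*\mu=e^{2\nu t}\mu$ to produce $e^{2\nu t}\le Ce^{-2t}$, hence $\nu\le -1$. This gain of a factor $\delta^2$ (rather than the $\delta^0$ one gets from mere positivity) is the whole point and is what produces the gap of size exactly $1$; it is not a consequence of the escape function or of ``damping.'' Your proof proposal is therefore missing the central mechanism (symplectic trapped set plus extra regularity of $\mu$ in stable directions) and the conclusion does not follow from the ingredients you listed.
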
 We note that here $N$ can be chosen to be equal to $1$, see Lemma \ref{FaureSjostrand}. 

The proof of Proposition \ref{lem:key} will be given on page  \pageref{proof:keylem}. The following sections are devoted to its  preparation.

\subsection{Semiclassical formulation of the problem}\label{sec:semiclformulation}

In this section, let ${\bf M}$ be an arbitrary compact negatively curved Riemannian manifold without boundary. 
We let $X$ be the generating vector field of the geodesic flow $\varphi_t$ on $\mc{M}=S{\bf M}$, and $\mc{L}\to \mc{M}$ a complex line bundle equipped with a metric $|\cdot|$ and a Hermitian connection $\nabla$ (i.e., preserving the metric on $\mc{L}$). We consider the flow acting on sections of powers $\mc{L}^n$ of the line bundle (here $n\in \nn$) by considering the operator 
\[ \Xbf u := \nabla^{n}_{X}u , \quad u \in C^\infty(\mc{M}; \mc{L}^n),\]
where we denote the induced connection on $\mc{L}^n$ by $\nabla^{n}$. 
We will assume that the curvature $\Omega$ of $(\mc{L},\nabla)$ is preserved by the flow of $X$, that is 
\begin{equation}\label{eq:OmegaX}
\iota_X\Omega =0.
\end{equation}
Moreover, we consider the family $\{\omega_\rho\}_{\rho\in \R}$ of symplectic forms on the cotangent bundle $T^\ast \M$ defined by \eqref{eq:omega_rho}. 

\subsubsection{Reduction to a semiclassical problem}\label{sec:semicl1}
To analyze the operator $\Xbf$ acting on $C^\infty(\mc{M}; \mc{L}^n)$ with $n\geq 0$, it is first convenient to view it as a semiclassical family.  
To apply the calculus introduced in Section \ref{sec:calculus} and the anisotropic Sobolev spaces from Section \ref{sec:sobG}, we define the set $D:=\{1/n:n\in \nn^*\}\cup\{2\}$ and the tensor power map $n:D\to \N_0$ given by $n(h):=1/h$ for $h\in D\cap (0,1]$ and $n(2):=0$.  As demonstrated in Example \ref{ex:Ahcovderiv}, the operator 
\[ P_{h}: C^\infty(\mc{M}; \mc{L}^{n(h)})\to  C^\infty(\mc{M}; \mc{L}^{n(h)}), \quad P_{h}u:=h\nabla^{n(h)}_{X} u,\]
is then in $\Psi_h^{1}(\mc{M};\L^{n(h)})$ and its modified principal symbol is represented by the function
\bq 
(x,\xi)\mapsto i\xi(X(x))=:i p_0(x,\xi),\qquad (x,\xi)\in T^\ast \M.\label{eq:princsymb}
\eq
We will consider the operator $$P_{h}(\la):=P_{h}+\la,$$ where $\Re\la\in [-c_0h, c_1 h]$ for some $c_0\in (0,1)$ and $c_1>0$ describing a spectral strip. Fix $N>\frac{c_0}{\gamma_\mathrm{min}}$, where $\gamma_\mathrm{min}>0$ is the minimal Anosov expansion rate of $X$ on $\M$ defined by
\bq
\gamma_\mathrm{min}:=\min(\gamma_+,\gamma_-),\qquad \gamma_\pm:= \inf_{v\in E_\pm,\norm{v}=1}\liminf_{t\to +\infty}\frac{1}{t}\log \norm{d\varphi_{\mp t} v},\label{eq:minexpansionrate}
\eq
where $E_+,E_-\subset T\M$ are the stable $(+)$ and unstable $(-)$ subbundles in the Anosov decomposition of $T\M$. For example, if $\M$ has constant curvature $-1$, then $\gamma_\mathrm{min}=1$. 

In order to prove Proposition \ref{lem:key} by contradiction, we show in the following that its negation implies certain convergence statements which are special cases of a somewhat more abstract semiclassical statement that we will formulate in Assumption \ref{ass:contradition}. 

If \eqref{eq:keybound} does not hold, then $P_h(\la)=h{\bf X}+\la$ does not satisfy the statement 
\begin{multline} \exists\,C>0: \forall \,h\in D, \;\forall \,\la \; {\rm s.t.} \;\Re \la\in [-c_0h,c_1h], \\ \|P_h(\la)^{-1}\|_{\mc{H}_h^{Nm}(\M;\L^{n(h)})\to \mc{H}_h^{Nm}(\M;\L^{n(h)})}\leq Ch^{-3-2N}(h+|\la|)^{2N+1},\label{eq:contradiction}
\end{multline}
where $\mc{H}_h^{Nm}(\M;\L^{n(h)}):=\Op_{h,n(h)}(e^{NG})^{-1}(L^2(\M;\L^{n(h)}))=\mc{H}_{h}^{Nm}$ as in Section \ref{sec:sobG}. 
This means that there are sequences $(h_j)_j\subset D$, $(u_j)_j\subset \mc{H}_{h_j}^{Nm}(\M;\L_{h_j})$, $(\la_j)_{j}\subset \C$, satisfying $\|u_j\|_{\mc{H}_{h_j}^{Nm}(\M;\L^{n(h_j)})}=1$, $\Re \lambda_j\in [-c_0h_j,c_1h_j]$, 
 such that 
\begin{equation}\label{Phu_j0}
 \|P_{h_j}(\la_j)u_j\|_{\mc{H}_{h_j}^{Nm}(\M;\L^{n(h_j)})}=o(h_j^{2N+3}(h_j+|\la_j|)^{-2N-1}) \qquad \text{as }j\to +\infty.
\end{equation}
We now distinguish two cases: either one has $h_j\to 0$, which is the ``large $n$'' case, or one has $h_j\not\to 0$, which we call the ``bounded $n$'' case.

\textbf{The case of large $n$.} To simplify the notation, we will just write $h$ for $h_j$ in what follows (i.e., we replace $D$ by $\{h_j\}\subset D$), keeping in mind that $h$ is a sequence going to $0$, and we shall write $\la=\la(h)$ instead of $\la_j$, considering it as a function depending on $h$. We will then write $u_h$ instead of $u_j$, so that \eqref{Phu_j0} reads
\begin{equation}\label{Phu_j}
 \|P_{h}(\la)u_h\|_{\mc{H}_{h}^{Nm}(\M;\L^{n(h)})}=o(h^{2N+3}(h+|\la|)^{-2N-1}) \qquad \text{as }h\to 0.
\end{equation}
Up to extracting a subsequence, we can assume that the convergence $h\to 0$ is strictly monotonously decreasing, that  
\begin{equation}\label{limitlaj}
\Im\la(h) \to \Upsilon,\quad \quad \frac{\Re\la(h)}{h}\to \nu\qquad\qquad \text{as }h\to 0
\end{equation}  
for some $\Upsilon \in \R\cup \{-\infty,\infty\}$, $\nu\in [-c_0,c_1]$. If $\Upsilon\in \{-\infty,\infty\}$, then we can assume that the convergence $|\Im \la(h)|\to+\infty$ is strictly monotone. 

Making all these assumptions, the fact that the limit $\Upsilon$ can be infinite is technically inconvenient.  Conceptually, it corresponds to an escape of energy at infinity for the Hamiltonian $p_0$. To prevent this, we will perform in the following a rescaling of our semiclassical parameter involving $\Im \lambda(h)$. As we shall see, this will allow us to obtain a formally completely analogous situation as in \eqref{Phu_j} and \eqref{limitlaj} but with a new limit of the imaginary part of the considered spectral parameter that is always finite, so that no energy escapes at infinity anymore.

Let us define the new semiclassical parameter
\bq
h'=h'(h):=\begin{dcases}\frac{h}{1+|\Upsilon|},\qquad &\text{if } \Upsilon\in \R,\\
\frac{h}{1+|\Im\la(h)|}, &\text{if } \Upsilon\in \{-\infty,\infty\}.\end{dcases}
\label{eq:hprime}\eq
The strict monotone convergence of $h$ to $0$ and the strict monotone growth of $|\Im \la(h)|$ in the unbounded case imply that the map $h\mapsto h'=h'(h)$  defined by \eqref{eq:hprime} is injective. This allows us to associate conversely to each $h'$ obtained in \eqref{eq:hprime} a unique corresponding $h\in D$, denoted $h(h')$, which has the property that $h(h')\to 0$ as $h'\to 0$. We can then switch to a semiclassical calculus with the asymptotic  parameter $h'$  in the sense of Section \ref{sec:calculus} by defining the new domain $D':=\{h'(h)\, |\, h\in D\}$ and the new tensor power map
\bq
n'(h'):=n(h(h'))=\frac{1}{h(h')},\qquad h'\in D'.\label{eq:newtensorpower}
\eq
In what follows, it is important to distinguish between the notions of \emph{semiclassical parameter} and  \emph{semiclassical calculus} and to keep in mind that the notation $h,h'$ actually has two formally different meanings. Let us explain this: \begin{enumerate}
\item $h$ and $h'$ can be regarded as semiclassical parameters in the sense of Section \ref{sec:calculus}. In this point of view,  $h$ and $h'$ denote the elements in the domains $D$ and $D'$ of the two tensor power maps $n:D\owns h\mapsto n(h)\in \N_0$ and $n':D'\owns h'\mapsto n'(h')\in \N_0$. Those maps are formally the relevant objects, each defining an associated calculus as in Section \ref{sec:calculus}. Thus, one should think of an $n$-calculus and an $n'$-calculus rather than an $h$- and an $h'$-calculus. 
\item $h$ and $h'$ can be regarded as functions of one another, i.e., $h$ denotes the function $D'\to D$ given by $h'\mapsto h(h')$ and $h'$ denotes the function $D\to D'$ given by $h\mapsto h'(h)$. These functions are bijective and each others' inverses. 
\end{enumerate}
The two meanings are closely related by the equation $n=n'\circ h'$ of maps $D\to \N_0$ and the equation $n'=n\circ h$ of maps $D'\to \N_0$. It is conceptually important in the following to distinguish between objects of the $n$-calculus that become parametrized by $h'$ by putting $h=h(h')$ and objects of the $n'$-calculus which are intrinsically parametrized by $h'$. Formally, we achieve this by introducing the following notational generalization:
\begin{rem}[Generalized quantization maps]\label{rem:generalizedop} 
The quantization map ${\rm Op}_{h,n(h)}$ introduced in \eqref{expquantization} depends on $h$ in two ways: via $n(h)$ and via occurrences of $h$ which are not in the argument of the map $n$. We now define the generalized quantization map 
 ${\rm Op}_{h(h'),n'(h')}$ as a map from $S_{h',\delta}^{m}(\mc{M})$ into the continuous linear maps $C^\infty(\mc{M};\mathcal L^{n'(h')})\to C^\infty(\mc{M};\mathcal L^{n'(h')})$ by replacing  in  \eqref{expquantization} every occurrence of $h$ by $h(h')$ and using that $n(h(h'))=n'(h')$. Similarly, we define ${\rm Op}_{h'(h),n(h)}$ as a map from $S_{h,\delta}^{m}(\mc{M})$ into the continuous linear maps $C^\infty(\mc{M};\mathcal L^{n(h)})\to C^\infty(\mc{M};\mathcal L^{n(h)})$ by replacing in the $n'$-calculus version of \eqref{expquantization} every occurrence of $h'$ by $h'(h)$ and using that $n'(h'(h))=n(h)$. We emphasize that we will not need to generalize the calculus using these quantizations, i.e., we will not need to generalize $\Psi^{m}_{h,\delta}(\M,\L^{n(h)})$, $\Psi^{m}_{h',\delta}(\M,\L^{n'(h')})$, $\sigma_{h,n(h)}$, and $\sigma_{h',n'(h')}$ in a similar way. The properties of the maps ${\rm Op}_{h(h'),n'(h')}$ and ${\rm Op}_{h'(h),n(h)}$ that we will use follow directly from statements which are formulated either entirely within the $n$- or entirely within the $n'$-calculus. 
 
 For example, the Hilbert space 
 \bq
 \mc{H}_{h(h')}^{Nm}(\M;\L^{n'(h')}):=\Op_{h(h'),n'(h')}(e^{NG})^{-1}(L^2(\M;\L^{n'(h')}))\label{eq:newhilbert}
 \eq
is well-defined, i.e., $\Op_{h(h'),n'(h')}(e^{NG})$ is invertible for all $h'\in D'$, as follows immediately from putting $h=h(h')$ in \eqref{eq:Ahinverse} and \eqref{eq:Rhnorm}.
\end{rem} 

Note that the terms $h'n(h')$ and $\frac{h'(h)}{h}$ are convergent as $h'\to 0$ and $h\to 0$, respectively:
\bq
\lim_{h'\to 0}h'n(h')=\lim_{h\to 0}\frac{h'(h)}{h}=\begin{dcases}\frac{1}{1+|\Upsilon|},\qquad & \text{if }\Upsilon\in \R,\\
0, & \text{if }\Upsilon\in \{-\infty,\infty\}.\end{dcases}\label{eq:convergencehn}
\eq
 We define for $h'\in D'$ the parameter $\lambda'=\lambda'(h')$ and $P'_{h'}(\lambda')\in \Psi_{h'}^{1}(\mc{M};\mc{L}^{n'(h')})$ by
\[
P'_{h'}(\lambda'):=h'X_{n'(h')}+\lambda',\qquad \lambda'=\lambda'(h'):= \frac{h'}{h(h')}\lambda(h(h')),
\]
which have the advantage that
\begin{equation*}
\Im\la'(h') \to \Lambda,\quad \quad \frac{\Re\la'(h')}{h'}\to \nu\qquad\qquad \text{as }h'\to 0,
\end{equation*}
where the new limit
\[
\Lambda:=\begin{dcases}\frac{\Upsilon}{1+|\Upsilon|},\qquad & \text{if }\Upsilon\in \R,\\
\pm 1, &\text{if }\Upsilon=\pm \infty,\end{dcases}
\]
is now always finite, and $\nu\in [-c_0,c_1]$ is as before. In particular $\la'$ is bounded. 

One has the equation
\bq
P'_{h'}(\lambda')=\frac{h'}{h(h')}P_{h(h')}(\la)\label{eq:exampleeq}
\eq
of operators in $\Psi_{h'}^{1}(\mc{M};\mc{L}^{n'(h')})$.  Using \eqref{eq:exampleeq}, and recalling again that $n(h(h'))=n'(h')$,  the relation \eqref{Phu_j} reads in terms of $h'$ as follows:
\begin{equation}
 \norm{P'_{h'}(\lambda')u_{h(h')}}_{\mc{H}_{h(h')}^{Nm}(\M;\L^{n'(h')})}=o\Big(h'h(h')(1+{h'}^{-1}|\la'|)^{-2N-1}\Big)\qquad \text{as }h'\to 0.\label{eq:temp1}
\end{equation} 
We must now take into account that the Hilbert space $\mc{H}_{h(h')}^{Nm}(\M;\L^{n'(h')})$ defined in  \eqref{eq:newhilbert}
 does not agree with the semiclassical Sobolev space of the  $n'$-calculus given by
$$\mc{H}_{h'}^{Nm}(\M;\L^{n'(h')})=\Op_{h',n'(h')}(e^{NG})^{-1}(L^2(\M;\L^{n'(h')})).$$
The two spaces agree as sets but their norms differ. More precisely, one has
\begin{align}\begin{split}
\norm{\cdot}_{\mc{H}_{h(h')}^{Nm}(\M;\L^{n'(h')})}&\leq C_L(h')  \norm{\cdot}_{\mc{H}_{h'}^{Nm}(\M;\L^{n'(h')})},\\ \norm{\cdot}_{\mc{H}_{h'}^{Nm}(\M;\L^{n'(h')})}&\leq C_R(h')  \norm{\cdot}_{\mc{H}_{h(h')}^{Nm}(\M;\L^{n'(h')})},\label{eq:normcompare}\end{split}
\end{align}
with $h'$-dependent bounds given by
\begin{align}\begin{split}
C_R(h')&:=\norm{\Op_{h',n'(h')}(e^{NG})\Op_{h(h'),n'(h')}(e^{NG})^{-1}}_{L^2(\M;\L^{n'(h')})\to L^2(\M;\L^{n'(h')})},\\
C_L(h')&:=\norm{\Op_{h(h'),n'(h')}(e^{NG})\Op_{h',n'(h')}(e^{NG})^{-1}}_{L^2(\M;\L^{n'(h')})\to L^2(\M;\L^{n'(h')})}.\label{eq:boundsc1c2}\end{split}
\end{align}
We then define for $h'\in D'$ the normalized distributions 
\bq
u'_{h'}:=\frac{u_{h(h')}}{\norm{u_{h(h')}}_{\mc{H}_{h'}^{Nm}(\M;\L^{n'(h')})}}\in \mc{H}_{h'}^{Nm}(\M;\L^{n'(h')}).\label{eq:newu_h}
\eq
Replacing $u_{h(h')}$ by $u'_{h'}$ and $\norm{\cdot}_{\mc{H}_{h(h')}^{Nm}(\M;\L^{n'(h')})}$ by $\norm{\cdot}_{\mc{H}_{h'}^{Nm}(\M;\L^{n'(h')})}$ in \eqref{eq:temp1},  we get the estimate
\begin{equation}
 \norm{P'_{h'}(\lambda')u_{h'}}_{\mc{H}_{h'}^{Nm}(\M;\L^{n'(h')})}=o\Big(h'h(h')(1+{h'}^{-1}|\la'|)^{-2N-1}C_L(h')C_R(h')\Big) \qquad \text{as }h'\to 0\label{eq:temp2}
\end{equation}
in which the left hand side is now expressed using the desired semiclassical Sobolev spaces. 
It remains to bound the constants $C_L(h')$, $C_R(h')$ in terms of $h'$. 
\begin{lemma} One has the following bounds in the limit $h'\to 0$:
\bq
 C_L(h')=\mc{O}\big(h(h')^{N}{h'}^{-N}\big),\qquad C_R(h')=\mc{O}\big({h(h')}^{N}{h'}^{-N}\big).\label{eq:boundsClCR}
 \eq
\end{lemma}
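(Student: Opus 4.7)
The plan is built around a scaling identity relating the two quantizations appearing in $C_L(h')$ and $C_R(h')$, both of which share the tensor power $n'(h')$ but use different semiclassical parameters. Writing $s := h(h')/h' \geq 1$ and substituting $\xi \mapsto s\xi$ in the local oscillatory integral \eqref{expquantization} defining $\Op_{h(h'),n'(h')}$, one uses the identity $h(h')\cdot s = h'$ (which collapses $\frac{i}{h(h')}(x-x')\cdot s\xi$ to $\frac{i}{h'}(x-x')\cdot\xi$) and the compatible transformation of the modification term $-h(h')n\beta(x)$ into $-h'n\beta(x)$ after re-absorbing the shift into a new frequency variable. The result is the clean local identity
\[
\Op_{h(h'),n'(h')}(a) = \Op_{h', n'(h')}\big(a(x, s\xi)\big) \pmod{\Psi^{-\infty}_{h'}}
\]
valid for any admissible symbol $a$, with the remainder absorbing the cross-terms produced by gluing local charts via a partition of unity.

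Applying this identity to $a = e^{\pm NG}$ and invoking the homogeneity structure $G = m\log F$ (with $m$ of degree $0$ and $F$ of degree $1$ for $|\xi| > r$), one computes
\[
e^{NG(x, s\xi)} = s^{Nm(x,\xi)}\, e^{NG(x,\xi)} \qquad\text{for } |\xi| > r/s,
\]
which is pointwise bounded by $s^N e^{NG(x,\xi)}$ since $|m| \leq 1$ and $s \geq 1$. Combined with the inverse formula \eqref{eq:Ahinverse}, which gives $\Op_{h',n'(h')}(e^{NG})^{-1} = \Op_{h',n'(h')}(e^{-NG})(I + R(h'))^{-1}$ with $\|(I+R(h'))^{-1}\|_{L^2 \to L^2} \leq 2$, this yields
\[
C_L(h') \leq 2\,\big\| \Op_{h', n'(h')}\big(e^{NG(x,s\xi)}\big)\,\Op_{h', n'(h')}(e^{-NG}) \big\|_{L^2 \to L^2} + \mc{O}({h'}^\infty).
\]
The composition formula \eqref{eq:compositionformula} in the $h'$-calculus then shows this product is a zero-order operator with principal symbol $e^{NG(x, s\xi)}\cdot e^{-NG(x,\xi)} = s^{Nm(x,\xi)}$ (for $|\xi|$ large), bounded pointwise by $s^N$. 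The $L^2$ estimate \eqref{eq:normbound} gives $C_L(h') = \mc{O}(s^N) = \mc{O}(h(h')^N {h'}^{-N})$. For $C_R(h')$ the symmetric computation produces the reciprocal symbol $s^{-Nm(x,\xi)}$, which is also bounded by $s^N$ since $m \geq -1$, yielding the same bound.

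The main obstacle I anticipate is verifying that $e^{NG(x, s\xi)}$ lies in the symbol class $S^{Nm}_{h',\delta}(\M)$ with \emph{all} seminorms bounded by a constant multiple of $s^N$, particularly in the regime $s \to \infty$ corresponding to $\Upsilon = \pm\infty$, where $s = 1 + |\Im\la(h)|$ can grow without bound. I would address this via Fa\`a di Bruno's formula applied to $s^{Nm(x,\xi)} = \exp(Nm(x,\xi)\log s)$, together with the homogeneity identity $m(x, s\xi) = m(x,\xi)$ for $|\xi| > r/s$ and the elementary estimate $s^{|\beta|}\langle s\xi\rangle^{-|\beta|} \lesssim \langle\xi\rangle^{-|\beta|}$ valid in the same regime; the logarithmic factors $(\log s)^{|\beta|}$ produced by differentiating the exponential can be absorbed into any arbitrarily small positive power of $s$, so that the overall seminorm bound $C_{\alpha\beta}\,s^N$ persists and justifies both the symbol calculus manipulations above and the application of \eqref{eq:normbound}.
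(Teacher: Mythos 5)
Your proof is correct and takes essentially the same route as the paper's: both reduce via \eqref{eq:Ahinverse} to bounding the composition of two quantizations of $e^{\pm NG}$, relate the two quantizations by a change of variables in the oscillatory integral \eqref{expquantization}, and bound the composite symbol pointwise by $s^N$ via the homogeneity of $G=m\log F$. The only differences are cosmetic: you rescale $\xi\mapsto s\xi$ to stay in the $n'$-calculus (where $s=h(h')/h'\geq 1$ can be unbounded, which is why you correctly flag and then resolve the symbol-class seminorm issue), whereas the paper rescales $\xi\mapsto\xi/s$ into the $n$-calculus where $1/s\leq 1$ is bounded and the analogous symbol-class statement is a bit cleaner to state; also note a typo --- the parenthetical identity should read $h(h')=s\cdot h'$, not $h(h')\cdot s=h'$.
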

\begin{proof}
First, let us define the two bounded operators $L^2(\M;\L^{n'(h')})\to L^2(\M;\L^{n'(h')})$
 $$
 \tilde A_{h'}:=\Op_{h(h'),n'(h')}(e^{NG})\Op_{h',n'(h')}(e^{NG})^{-1},\quad \tilde B_{h'}:=\Op_{h',n'(h')}(e^{NG})\Op_{h(h'),n'(h')}(e^{NG})^{-1}.$$
As a first step, we apply \eqref{eq:Ahinverse} and \eqref{eq:Rhnorm} in the $n$-calculus and the $n'$-calculus to get
\begin{align*}
\Op_{h',n'(h')}(e^{NG})^{-1}&=\Op_{h',n'(h')}(e^{-NG})(I+R'(h'))^{-1},\\
 \Op_{h(h'),n'(h')}(e^{NG})^{-1}&=\Op_{h(h'),n'(h')}(e^{-NG})(I+R(h(h')))^{-1},
\end{align*}
where both operators $R'(h')$ and $R(h(h'))$ are bounded on $L^2(\M;\L^{n'(h')})$ with operator norms $\leq 1/2$ for all $h'\in D'$. It therefore suffices to bound the $L^2$-norms of the operators 
  $$
 A_{h'}:=\Op_{h(h'),n'(h')}(e^{NG})\Op_{h',n'(h')}(e^{-NG}),\quad B_{h'}:=\Op_{h',n'(h')}(e^{NG})\Op_{h(h'),n'(h')}(e^{-NG}).$$
A priori, it is unclear whether these operators belong to the $n'$-calculus, or whether the operators  $A_h:=A_{h'(h)}$ and  $B_h:=B_{h'(h)}$ given by
$$
 A_h=\Op_{h,n(h)}(e^{NG})\Op_{h'(h),n(h)}(e^{-NG}),\quad B_h=\Op_{h'(h),n(h)}(e^{NG})\Op_{h,n(h)}(e^{-NG})$$ belong to the $n$-calculus. However, thanks to the boundedness of $h'(h)/h$ as $h\to 0$ implied by \eqref{eq:convergencehn}, we can show that $A_h$ and $B_h$ are indeed semiclassical operators in $\Psi_{h}^{0}(\mc{M};\mc{L}^{n(h)})$  with modified principal symbols $\sigma_{h,n(h)}(A_h)$, $\sigma_{h,n(h)}(B_h)$ represented by the functions
\[a_{h}(x,\xi):=e^{N(G(x,\xi)-G(x,\frac{h'(h)}{h}\xi))},\qquad b_{h}(x,\xi):=e^{N(G(x,\frac{h'(h)}{h}\xi)-G(x,\xi))}.\]
Since the composition of two operators in the $n$-calculus belongs again to the $n$-calculus and in view of the composition formula \eqref{eq:compositionformula}, it suffices to show that $\Op_{h'(h),n(h)}(e^{\pm NG})\in \Psi^{\pm Nm}_{h}(\mc{M};\mc{L}^{n(h)})$ with modified principal symbol represented by $e^{\pm N\tilde G_h(x,\xi)}$, where we put $\tilde G_h(x,\xi):=G(x,\frac{h'(h)}{h}\xi)$. Here we use that $e^{\pm N\tilde G_h}\in S_{h,\delta}^{\pm Nm}(\mc{M})$ since $h'(h)/h$ is bounded as $h\to 0$.   Recalling the quantization expression \eqref{expquantization} and the definition of $\Op_{h'(h),n(h)}$ from Remark \ref{rem:generalizedop}, one obtains in a local trivialization with the change of variables $\xi\to \frac{h'(h)}{h}\xi$
\[\begin{split}
{\bf s}^{-n(h)}\Op_{h'(h),n(h)}(e^{\pm N G})(f{\bf s}^{n(h)})(x)&= \frac{1}{2\pi h'(h)^d}\int_{\rr^d}
e^{\frac{i(x-x')\cdot \xi}{h'(h)}}e^{N G(x,\xi-h'(h)n(h)\beta(x))}f(x')dx' d\xi\\
&= \frac{1}{2\pi h'(h)^d}\int_{\rr^d}
e^{\frac{i(x-x')\cdot \xi}{h'(h)}}e^{N \tilde G(x,\frac{h}{h'(h)}\xi- hn(h)\beta(x))}f(x')dx' d\xi\\
&=\frac{1}{2\pi h^d}\int_{\rr^d}
e^{\frac{i(x-x')\cdot \xi}{h}}e^{N\tilde G(x,\xi- hn(h)\beta(x))}f(x')dx' d\xi\\
&={\bf s}^{-n(h)}\Op_{h,n(h)}(e^{\pm N \tilde G})(f{\bf s}^{n(h)})(x).
\end{split}\]
Recall from Section \ref{sec:sobG} that $G=m\log(F)$ with $m,F\in \Cinft(T^\ast \M)$ having the property that there is a number $r>0$ such that $m(x,\xi)$ and $F(x,\xi)$ are  positively homogeneous of degrees $0$, $1$, respectively for $|\xi|\geq r$.  This gives us for $(x,\xi)\in T^\ast \M$ as $h\to 0$
\bqn
\sup_{(x,\xi)\in T^*\mc{M}}|a_{h}(x,\xi)|=\mc{O}({h}^{N}{h'(h)}^{-N}),\qquad \sup_{(x,\xi)\in T^*\mc{M}}|b_{h}(x,\xi)|=\mc{O}({h}^{N}{h'(h)}^{-N})
\eqn
and yields with \eqref{eq:normbound} that $A_h$ and $B_h$ have operator norms bounded by $\mc{O}({h}^{N}{h'(h)}^{-N})$  as operators $L^2(\M;\L^{n(h)})\to L^2(\M;\L^{n(h)})$ in the limit $h\to 0$. Plugging in $h=h(h')$, it follows that $A_{h'}$ and $B_{h'}$ have operator norms bounded by $\mc{O}(h(h')^{N}h'^{-N})$  as operators $L^2(\M;\L^{n'(h')})\to L^2(\M;\L^{n'(h')})$ in the limit $h'\to 0$, finishing the proof.
\end{proof}
We can now finish the transition from the $n$-calculus to the $n$-calculus: by the preceding lemma, \eqref{eq:temp2} implies
\begin{equation*}
 \norm{P'_{h'}(\lambda')u_{h'}}_{\mc{H}_{h'}^{Nm}(\M;\L_{h'})}=o({h'}^{1-2N}(h(h'))^{1+2N}(1+{h'}^{-1}|\la'|)^{-2N-1})=o({h'}^{2}) \qquad \text{as }h'\to 0,
\end{equation*}
where we used that $h(h')\sim h'$ if $|\Upsilon|\not=\infty$, and ${h'}^{-1}|\la'|> {h'}^{-1}/2$ for small $h'>0$ if $|\Upsilon|=\infty$.

\textbf{The case of bounded $n$.} 
 When $h_j\not\to 0$ in \eqref{Phu_j0}, we pass to a constant subsequence  with some value $1/n_0$, $n_0\in \nn^*$. We then arrive at the situation that there is a sequence of unit norm elements $u_j\in \mc{H}_{n_0}^{Nm}(\M;\L^{n_0})$, a sequence of complex numbers $\la_j$, and limits  $\nu\in [-c_0/n_0,c_1/n_0]$, $\Upsilon \in \R\cup \{-\infty,\infty\}$ such that $\Re \lambda_j \to \nu$, $\Im \lambda_j \to \Upsilon$,   and
\bq \label{eq:cond2}
 \|({\bf X}+\la_j)u_j\|_{\mc{H}_{h(n_0)}^{Nm}(\M;\L^{n_0})}=o(\cjg \la_j\cjd^{-2N-1}) \qquad \text{as }j\to +\infty.
\eq
Note that the negation of \eqref{eq:keybound0} implies the $n_0=0$ version of \eqref{eq:cond2} 
with the assumptions $\nu\in [-c_0,c_1]$ and $|\Upsilon|\geq 1$. We therefore generalize our situation to include also this case.
If $|\Upsilon|<\infty$, then \eqref{eq:cond2} is equivalent to 
\bqn
\|({\bf X}+\nu+i\Upsilon)u_j\|_{\mc{H}_{h(n_0)}^{Nm}(\M;\L^{n_0})}\longrightarrow 0 \qquad \text{as }j\to +\infty,
\eqn
which implies that $\nu+i\Upsilon\in \sigma^{\rm PR}_{n_0}$. Then \eqref{eq:resonancesunion} gives a contradiction since $c_0<1$. Thus  $\Upsilon\in \{-\infty,\infty\}$ and we can proceed similarly as in \eqref{eq:hprime} by  putting  
$h'_j:=1/(1+|\Im\la_j|)$, where  $h'_j\to 0$ as $j\to \infty$. 
For simplicity of notation we will remove the $j$ index and consider $h'\to 0$ to be a sequence of positive numbers and $\la'=\la'(h'):=h_j'\la_j$ as a family depending on $h'$ so that $\Im\la'(h')$ converges to a limit $\Lambda \in \{-1,1\}$, $(h')^{-1} \Re\la'(h')\to\nu$ as $h'\to 0$,  
and we have a family $u_{h'}\in \mc{H}_{h(n_0)}^{Nm}(\M;\L^{n_0})$ such that, if $P'_{h'}(\la'):=h'{\bf X}+\la'$, one has  
\begin{equation}\label{casenbounded}
\|P'_{h'}(\la')u_{h'}\|_{\mc{H}_{h(n_0)}^{Nm}(\M;\L^{n_0})}=o({h'}^{2N+2}) \qquad \text{as }h'\to 0.
\end{equation} 
As above, it is more convenient to work on $\mc{H}_{h'}^{Nm}(\mc{M};\mc{L}^{n_0})$, and we  
notice that $P'_{h'}(\la')\in \Psi_{h'}^1(\mc{M};\mc{L}^{n_0})$, which fits in the calculus of Section \ref{sec:calculus} by choosing the trivial constant tensor power function $h'\mapsto n(h')={n_0}$. The bound becomes, with the same argument as above, 
\[\|P'_{h'}(\la')u_{h'}\|_{\mc{H}_{h'}^{Nm}(\M;\L^{n_0})}=o({h'}^{2}).\]

\subsubsection{The fundamental assumption}
The upshot of Section \ref{sec:semicl1} is  that it suffices to consider the following situation:

\begin{ass}\label{ass:contradition} Let $\mc{L}$ be a Hermitian line bundle with connection $\nabla$ on $\mc{M}=S{\bf M}$. 
For some positive real numbers $c_0,c_1,N$ with\footnote{ Here $\gamma_\mathrm{min}>0$ has been defined in \eqref{eq:minexpansionrate}; one has $\gamma_\mathrm{min}=1$ if $\M=S{\bf M}$ with ${\bf M}=\gam \H^3$ hyperbolic.}  $N>\frac{c_0}{\gamma_\mathrm{min}}$, some domain $D\subset (0,2]$ with a tensor power map $D\owns h\mapsto n(h)$ as in Section \ref{sec:calculus} and some function $D\owns h\mapsto\lambda(h)\in \C$ such that for some $\Lambda \in [-1,1]$ and $\nu\in [-c_0,c_1]$ one has
\begin{equation}\label{eq:limitlajneq}
\Im\la(h) \to \Lambda, \qquad \frac{\Re\la(h)}{h}\to \nu,\qquad hn(h)\to 1-|\Lambda| \qquad \text{as }h\to 0,
\end{equation}
there is for each $h\in D$ a distributional section $u_h\in \mc{H}_{h}^{Nm}(\M;\L^{n(h)})$ of norm $1$ such that
\begin{equation}\label{Phu_jnewcond}
 \norm{P_{h}(\lambda)u_h}_{\mc{H}_{h}^{Nm}(\M;\L^{n(h)})}=o(h^{2}) \qquad \text{as }h\to 0,\qquad P_{h}(\lambda)=hX_{n(h)}+\lambda(h),
\end{equation}
where one has $\mc{H}_{h}^{Nm}(\M;\L^{n(h)})=\Op_{h}(e^{NG})^{-1}(L^2(\M;\L^{n(h)}))$ with $G$ as in Section \ref{sec:sobG} and $\Op_{h}$ as in \eqref{eq:Oph}. 
\end{ass}
\subsubsection{Principal symbol and Hamiltonian vector fields of $P_h(\lambda)$}
A function $p$ representing the modified principal symbol of  $-i P_h(\lambda)$ is 
\bq
p=p_0+{\rm Im}(\la),\label{eq:princsymplambd}
\eq
 where the $h$-independent function $p_0$ was introduced in  \eqref{eq:princsymb}. Since $dp=dp_0$, we see from \eqref{eq:defhamilt} that $p$ and $p_0$  have the same Hamiltonian vector fields, i.e., 
 $ 
 H^{\omega_\rho}_{p}=H^{\omega_\rho}_{p_0}$ for every $\rho\in \R$. By Lemma \ref{lem:Hp} and the assumption \eqref{eq:OmegaX}, we actually have 
 \begin{equation}\label{Hpomega=Hp} 
H_p^{\omega_\rho}=H_p^{\omega_0}=H_{p_0}^{\omega_0}, \quad\forall\;\rho\in \R.
\end{equation} 
In particular, the flow of $H^{\omega_\rho}_p$ agrees with the  flow $\Phi_t$ of $H^{\omega_0}_{p_0}$ for each $\rho\in \R$.
 
\subsection{Support and regularity of semiclassical measures}\label{sec:semiclmeasures}

In this section we consider the same setting as in Section \ref{sec:semiclformulation} and we use the notation from  Assumption \ref{ass:contradition}.   
By \cite[Theorem E.42]{dyatlovzworskibook}, up to replacing $D$ by a smaller domain (i.e., passing to a subsequence), there is a semiclassical measure $\mu\geq 0$ associated to $u_h$: for each $a\in C_c^\infty(T^\ast \M)$
\[ \cjg \Op_{h}(a)u_h, u_h\cjd_{\mc{H}_{h}^{Nm}} \to \int_{T^*\mc{M}}a\d\mu\quad \textrm{ as }h\to 0.\]
Here and in the following we write $\mc{H}_{h}^{Nm}:=\mc{H}_{h}^{Nm}(\mc{M};\mc{L}^{n(h)})$, $L^2:=L^2(\mc{M};\mc{L}^{n(h)})$. 

By \eqref{Phu_jnewcond} and the same argument as in \cite[Theorem E.43]{dyatlovzworskibook} we have 
\begin{lemma}\label{supportmu}
If Assumption \ref{ass:contradition} is fulfilled, the semiclassical measure $\mu$ satisfies $$\supp(\mu)\subset \{(x,\xi)\in T^*\mc{M}\,|\, 
\xi(X(x))=\Lambda\}.$$
\end{lemma}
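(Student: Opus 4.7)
\medskip

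\noindent\textbf{Plan for the proof of Lemma \ref{supportmu}.} My plan is to adapt the standard support theorem for semiclassical measures, as in \cite[Theorem E.43]{dyatlovzworskibook}, to the anisotropic Sobolev setting and to the $h$-dependent shift of the principal symbol induced by $\lambda(h)$. It suffices to show that $\int a\,d\mu=0$ for every $a\in C_c^\infty(T^*\mc{M})$ whose support is disjoint from $\{(x,\xi)\in T^\ast\mc{M}\,|\,\xi(X(x))=\Lambda\}$. By definition of the semiclassical measure this reduces to proving that $\langle\Op_{h}(a)u_h,u_h\rangle_{\mc{H}_h^{Nm}}\to 0$ as $h\to 0$.

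\medskip

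\noindent\textbf{Parametrix construction and symbolic identity.} By \eqref{eq:princsymplambd} the modified principal symbol of $-iP_h(\lambda)$ is represented by $p_0+\Im\lambda(h)$, the imaginary contribution $-i\Re\lambda(h)$ being $O(h)$ thanks to \eqref{eq:limitlajneq}. Since $\Im\lambda(h)\to\Lambda$ uniformly and $\supp(a)$ lies at positive distance from the limiting zero set of this real-valued symbol, there exist $h_0>0$ and an open neighbourhood $U$ of $\supp(a)$ on which $p_0+\Im\lambda(h)$ is uniformly bounded away from zero for all $h\in D$ with $h<h_0$. Picking a cut-off $\chi\in C_c^\infty(U)$ equal to $1$ on $\supp(a)$ and setting $b_h:=\chi\,a/(p_0+\Im\lambda(h))\in C_c^\infty(T^*\mc{M})$ (bounded, with all derivatives, uniformly in $h$), the composition formula \eqref{eq:compositionformula} combined with the uniform boundedness $\Re\lambda(h)=O(h)$ yields an identity
\[
\Op_{h}(b_h)\bigl(-iP_h(\lambda)\bigr)=\Op_{h}(a)+hE_h,
\]
with $E_h\in\Psi_h^0(\mc{M};\mc{L}^{n(h)})$ of $L^2$-operator norm bounded uniformly in $h$ by \eqref{eq:normbound}.

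\medskip

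\noindent\textbf{Pairing, conclusion, and main obstacle.} Pairing both sides with $u_h$ in the $\mc{H}_h^{Nm}$-inner product and using \eqref{Phu_jnewcond} yields
\[
\bigl|\langle\Op_{h}(a)u_h,u_h\rangle_{\mc{H}_h^{Nm}}\bigr|\leq \bigl\|\Op_{h}(b_h)\bigr\|_{\mc{H}_h^{Nm}\to\mc{H}_h^{Nm}}\cdot\bigl\|P_h(\lambda)u_h\bigr\|_{\mc{H}_h^{Nm}}\cdot\|u_h\|_{\mc{H}_h^{Nm}}+O(h),
\]
which by $\|u_h\|_{\mc{H}_h^{Nm}}=1$ and $\|P_h(\lambda)u_h\|_{\mc{H}_h^{Nm}}=o(h^2)$ tends to $0$ provided the first factor stays bounded in $h$. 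The main technical point, and the only nontrivial step beyond the $L^2$-argument of \cite[Theorem E.43]{dyatlovzworskibook}, is precisely this uniform mapping bound on the anisotropic scale: one conjugates $\Op_{h}(b_h)$ by the weight $\Op_{h}(e^{NG})$, using \eqref{eq:Ahinverse}--\eqref{eq:Rhnorm} to see that the conjugated operator still belongs to $\Psi_h^0(\mc{M};\mc{L}^{n(h)})$ with modified principal symbol $b_h$, and hence has $L^2$-norm uniformly bounded in $h$ by \eqref{eq:normbound}. This gives the desired estimate and completes the proof.
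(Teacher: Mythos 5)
Your argument is correct in structure but differs from the paper's proof. You take the standard parametrix route: construct $b_h=\chi a/(p_0+\Im\lambda(h))$ away from the characteristic set, so that $\Op_h(b_h)(-iP_h(\lambda))=\Op_h(a)+hE_h$, pair with $u_h$, and combine the $o(h^2)$ bound on $\|P_h(\lambda)u_h\|_{\mc{H}_h^{Nm}}$ with a uniform $\mc{H}_h^{Nm}$-operator bound for $\Op_h(b_h)$ to deduce $\int a\,d\mu=0$ for all such $a$. The paper avoids the parametrix altogether: for \emph{arbitrary} $a\in C_c^\infty(T^*\mc{M})$ it evaluates $\lim_{h\to 0}\langle\Op_h(a)P_h(\lambda)u_h,u_h\rangle_{\mc{H}_h^{Nm}}$ two ways — by the composition formula the limit is $i\int(\xi(X)-\Lambda)a\,d\mu$, while by Cauchy--Schwarz, the $o(h^2)$ bound from \eqref{Phu_jnewcond}, and the uniform $L^2$-boundedness of $\Op_h(e^{NG})\Op_h(a)^*\Op_h(e^{NG})^{-1}$, the limit is $0$ — yielding $\int(\xi(X)-\Lambda)a\,d\mu=0$ for every $a$, which directly pins down $\supp\mu$. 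The paper's version is slightly leaner (no cutoff, no symbol inversion, no constraint on $\supp a$), but both arguments rest on exactly the same two technical inputs: the estimate \eqref{Phu_jnewcond} (in fact $o(1)$ already suffices at this stage) and the uniform $L^2$ bound for operators with compactly supported symbols conjugated by $\Op_h(e^{NG})$, obtained from \eqref{eq:compositionformula}, \eqref{eq:normbound}, and \eqref{eq:Ahinverse}--\eqref{eq:Rhnorm}. One sign to tidy in your write-up: you require $\supp a$ disjoint from $\{\xi(X)=\Lambda\}$ and then assert that $p_0+\Im\lambda(h)$, whose limiting zero set is $\{p_0=-\Lambda\}$, is bounded away from zero on $\supp a$; these two statements are consistent only when $\Lambda=0$, so make sure the set you avoid matches the actual characteristic set of $-iP_h(\lambda)$.
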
 
\begin{proof} For $A={\rm Op}_{h}(a) \in \Psi_h^{\rm comp}(\mc{M};\mc{L}^{n(h)})$ we have  
 as $h\to 0$
\[\begin{split} 
i\int_{T^*\mc{M}}(\xi(X)-\Lambda)a\, d\mu& = 
\lim_{h\to 0}\cjg AP_{h}(\la)u_h,u_h\cjd_{\mc{H}_{h}^{Nm}}
  =\lim_{h\to 0}\cjg P_{h}(\la)u_{h},{\rm Op}_{h}(a)^*u_h\cjd_{\mc{H}_{h}^{Nm}}\\
 |\cjg P_{h}(\la)u_{h},{\rm Op}_{h}(a)^*u_h\cjd_{\mc{H}_{h}^{Nm}}| & \leq \|P_{h}(\la)u_{h}\|_{\mc{H}_{h}^{Nm}}\|\Op_{h}(e^{NG}){\rm Op}_{h}(a)^*\Op_{h}(e^{NG})^{-1}\|_{L^2\to L^2}\|u_h\|_{\mc{H}_{h}^{Nm}}\\
\end{split} \]
that tends to $0$.
Here we used \eqref{Phu_j} and the fact that the middle term in the last inequality is bounded as $h\to 0$ by \eqref{eq:normbound} because $a$ is compactly supported. 
\end{proof}
We can also apply the argument of \cite[Theorem E.44]{dyatlovzworskibook}:
\begin{lemma}\label{flowinvmu} 
If Assumption \ref{ass:contradition} is fulfilled, one has 
\[ \forall\, a\in C_c^\infty(T^*\mc{M}),\quad \int_{T^*M}(H^{\omega_0}_{p_0}a-2\nu a)\, d\mu=0,\]
so that the pushforward of $\mu$ along the flow $\Phi_t$ of  $H^{\omega_0}_{p_0}$ fulfills
\bq
(\Phi_t)_\ast\mu=e^{2\nu t}\mu,\qquad t\in \R.\label{eq:mupshfwd}
\eq
\end{lemma}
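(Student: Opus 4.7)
The plan is to adapt the standard commutator argument for semiclassical measures (cf.\ \cite[Thm.\ E.44]{dyatlovzworskibook}) to the anisotropic Sobolev setting. Fix $a\in C_c^\infty(T^\ast\mc{M})$, set $A:=\Op_h(a)\in\Psi_h^{\mathrm{comp}}(\mc{M};\L^{n(h)})$, and consider
\[
J_h:=\tfrac{1}{h}\cjg[P_h(\lambda),A]u_h,u_h\cjd_{\mc{H}_h^{Nm}}.
\]
The strategy is to evaluate $\lim_{h\to 0}J_h$ in two complementary ways and to equate the results.

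For the first evaluation, I would use the commutator formula \eqref{eq:commpoisson}: the operator $\tfrac{i}{h}[P_h(\lambda),A]$ has modified principal symbol $\{ip_0+\lambda,a\}_{\omega_{hn(h)}}$. Since $\lambda\in\C$ is a scalar, and by \eqref{Hpomega=Hp} (which combines Lemma \ref{lem:Hp} with the hypothesis $\iota_X\Omega=0$ of \eqref{eq:OmegaX}), this bracket reduces to $iH_{p_0}^{\omega_0}a$ independently of $h\in D$. The definition of the semiclassical measure $\mu$ then gives $J_h\to \int_{T^\ast\mc{M}}H_{p_0}^{\omega_0}a\,d\mu$.

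For the second evaluation, I would split $\cjg[P_h(\lambda),A]u_h,u_h\cjd_{\mc{H}_h^{Nm}}=\cjg P_h(\lambda)Au_h,u_h\cjd_{\mc{H}_h^{Nm}}-\cjg AP_h(\lambda)u_h,u_h\cjd_{\mc{H}_h^{Nm}}$. Using that $A$ has compactly supported symbol, hence is uniformly bounded on $\mc{H}_h^{Nm}$, the second term is bounded by $\norm{A}\norm{P_h(\lambda)u_h}_{\mc{H}_h^{Nm}}=O(1)\cdot o(h^2)$, so it contributes $o(h)$ to $J_h$. For the first term, I would transpose: $\cjg P_h(\lambda)Au_h,u_h\cjd_{\mc{H}_h^{Nm}}=\cjg Au_h,P_h(\lambda)^{\ast_\mc{H}}u_h\cjd_{\mc{H}_h^{Nm}}$. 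Because the Hermitian connection and the $X$-invariance of the volume form on $\M$ together make $h\nabla_X$ skew-adjoint on $L^2(\M;\L^{n(h)})$, one has $P_h(\lambda)^{\ast_{L^2}}=2\Re\lambda-P_h(\lambda)$. The anisotropic adjoint $P_h(\lambda)^{\ast_\mc{H}}$ is the conjugate of $P_h(\lambda)^{\ast_{L^2}}$ by $(A^N_{h,n(h)})^\ast A^N_{h,n(h)}$, and by \eqref{eq:commpoisson} this conjugation introduces only corrections in $h\Psi_h^{0}$. Plugging in $P_h(\lambda)^{\ast_{L^2}}u_h=2\Re\lambda\,u_h-P_h(\lambda)u_h$, bounding the $P_h(\lambda)u_h$ pairing by $o(h^2)$ via the Assumption, and using $\Re\lambda/h\to\nu$, one obtains $J_h\to 2\nu\int a\,d\mu$.

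Equating the two limits gives $\int(H_{p_0}^{\omega_0}a-2\nu a)\,d\mu=0$ for every $a\in C_c^\infty(T^\ast\mc{M})$. The pushforward identity \eqref{eq:mupshfwd} then follows from a short ODE argument: setting $\psi(t):=\int (a\circ \Phi_t)\,d\mu$ and applying the identity just proved to $a\circ \Phi_t\in C_c^\infty(T^\ast\mc{M})$, one obtains $\psi'(t)=2\nu\psi(t)$, whence $\psi(t)=e^{2\nu t}\psi(0)$. The main technical point I anticipate is the careful bookkeeping of the order-$h$ corrections to $P_h(\lambda)^{\ast_\mc{H}}$ produced by conjugation through the weight of principal symbol $e^{2NG}$: these corrections have principal symbols proportional to $H_{p_0}^{\omega_0}G$, and the key verification is that their pairings against $u_h$ do not contribute in the limit.
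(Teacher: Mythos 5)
Your overall strategy (commutator argument feeding into a semiclassical-measure limit) matches the paper's, but the paper computes in the $L^2$-pairing rather than the anisotropic pairing, and this choice is essential. The paper starts from $h^{-1}\Re\langle Af_h, u_h\rangle_{L^2}$ with $f_h:=P_h(\lambda)u_h$ and $A=A^{*_{L^2}}=\Op_h(a)$, and uses the exact skew-adjointness $\nabla_X^{*_{L^2}}=-\nabla_X$ to obtain
\begin{equation*}
h^{-1}\Re\langle Af_h,u_h\rangle_{L^2}=(2h)^{-1}\langle[A,P_h]u_h,u_h\rangle_{L^2}+\Re(h^{-1}\lambda)\langle Au_h,u_h\rangle_{L^2}
\end{equation*}
as an \emph{exact} algebraic identity, with no $O(h)$ residual; the smallness of the left-hand side then follows by a one-shot conjugation of the $L^2$-pairing through $\Op_h(e^{NG})$ and the hypothesis \eqref{Phu_jnewcond}. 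Your variant instead forms the anisotropic adjoint $P_h(\lambda)^{*_\mc{H}}=(A_N^*A_N)^{-1}P_h(\lambda)^{*_{L^2}}(A_N^*A_N)$ with $A_N:=\Op_h(e^{NG})$, which differs from $2\Re\lambda-P_h(\lambda)$ by the $O(h)$ operator $Q_h:=h(A_N^*A_N)^{-1}[\nabla_X,A_N^*A_N]$ of modified principal symbol $2hN\,H_{p_0}^{\omega_0}G$. You rightly single this out as the ``main technical point'', but your conclusion that these corrections ``do not contribute in the limit'' is asserted, not argued, and I do not see how to justify it: carrying it through the symbol calculus gives $\tfrac1h\langle Au_h,Q_hu_h\rangle_{\mc{H}_h^{Nm}}\to 2N\int a\,H_{p_0}^{\omega_0}G\,d\mu$, and the escape function is built so that $H_{p_0}^{\omega_0}G<0$ near $E_u^*$ at fiber infinity, hence not zero on $\Gamma_+(\Lambda)\supset\supp\mu$ in general. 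So the second evaluation of $J_h$ as you set it up does not yield $2\nu\int a\,d\mu$, and this is a genuine gap in the proposal. The fix is to follow the paper: compute $\Re\langle Af_h,u_h\rangle$ in the $L^2$-pairing, where $\nabla_X$ is genuinely skew-adjoint and the $\mc{H}$-adjoint never has to be formed. (The concluding ODE argument from the integral identity to \eqref{eq:mupshfwd} is correct and matches the paper.)
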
 
\begin{proof} Let $f_h:=P_h(\la)u_h\in \mc{H}_{h}^{Nm}(\mc{M};\mc{L}^{n(h)})$. Then for $A\in \Psi_{h}^{\rm comp}(\mc{M};\mc{L}^{n(h)})$ with $A^*=A={\rm Op}_{h}(a)$ for some $a\in C_c^\infty(T^*\mc{M})$, we obtain using the relation $\Xbf^\ast = -\Xbf$ in $L^2(\mc{M};\mc{L}^{n(h)})$
\[h^{-1}{\rm Re}(\cjg Af_h,u_h\cjd_{L^2})=(2h)^{-1}\cjg [A,P_{h}]u_h,u_h\cjd_{L^2}+{\rm Re}(h^{-1}\la)\cjg Au_h,u_h\cjd_{L^2}\]
where we notice that the $L^2$-pairing makes sense due to the fact that ${\rm Op}_h(a):\mc{D}'(\mc{M};\mc{L}^{n(h)})\to C^\infty(\mc{M};\mc{L}^{n(h)})$. Note that 
\[\cjg Af_h,u_h\cjd_{L^2}=\cjg  {\rm Op}_h(e^{NG})^{-1}A{\rm Op}_h(e^{NG})^{-1}{\rm Op}_h(e^{NG})f_h, {\rm Op}_h(e^{NG})^*u_h\cjd_{L^2}=o(h^{2})\]
by using $\eqref{Phu_jnewcond}$ and that ${\rm Op}_h(e^{NG})^{-1}A{\rm Op}_h(e^{NG})^{-1}\in \Psi_h^{\rm comp}(\mc{M};\mc{L}^{n(h)})$ is uniformly bounded on $L^2$. 
This gives by applying \eqref{eq:defhamilt}, \eqref{eq:commpoisson},  \eqref{eq:Poissonconvergence}, \eqref{eq:limitlajneq} and  passing to the limit $h\to 0$ 
\[ 0=\int_{T^*\mc{M}}(\demi H^{\omega_{1-|\Lambda|}}_p(a)-\nu a)d\mu.\]
In view of  \eqref{Hpomega=Hp}, the proof is finished.
\end{proof}
Let us define
\bq
\Gamma_+:=E_0^*\oplus E_u^*, \qquad \Gamma_-:=E_0^*\oplus E_s^*,\qquad 
K:=\Gamma_+\cap\Gamma_-,\label{eq:defKGamma}
\eq
and for $\rho\in \R$
\begin{align*}
\Gamma_+(\rho)&:=\{ (x,\xi)\in T^*\mc{M}\,|\, \xi\in \rho \alpha(x)+E_u^*(x)\}\subset \Gamma_+,\\
\Gamma_-(\rho)&:=\{ (x,\xi)\in T^*\mc{M}\,|\, \xi\in \rho \alpha(x)+E_s^*(x)\}\subset \Gamma_-,\\
K_\rho&:=\{(x,\xi)\in T^*\mc{M}\, |\, \xi=\rho \alpha(x)\}=\Gamma_+(\rho)\cap \Gamma_-(\rho)\subset K,
\end{align*}
where $\alpha$ is the contact form on $\M=S{\bf M}$ (identifying $S{\bf M}=S^\ast {\bf M}$ using the metric). 
Next we use radial point estimates to get 
\begin{lemma}\label{supportmu2}
If Assumption \ref{ass:contradition} is fulfilled, the support of $\mu$ is contained in $\Gamma_+(\Lambda)$. Moreover, each open set $V\subset  T^\ast \M$ containing $K_\Lambda$ satisfies $\mu(V\cap \Gamma_+(\Lambda))>0$.
\end{lemma}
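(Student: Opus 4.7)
The plan is to prove both claims by combining the semiclassical radial point estimates for $P_h(\lambda)$ with the quasi-invariance $(\Phi_t)_\ast\mu=e^{2\nu t}\mu$ of Lemma \ref{flowinvmu}. The key geometric input is that, in the fiber-radial compactification of $T^\ast\M$, the projectivizations of $E_u^\ast$ and $E_s^\ast$ are, respectively, a radial source and a radial sink of $\Phi_t$ with normal rates bounded below by $\gamma_{\rm min}$; this is the same geometry used in \cite{FS11}, and it survives intact in our $\L^{n(h)}$-calculus because Lemma \ref{lem:Hp} together with $\iota_X\Omega=0$ yields $H_{p_0}^{\omega_\rho}=H_{p_0}^{\omega_0}$ for every $\rho$.

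\emph{Step 1 ($\supp\mu\subset\Gamma_+(\Lambda)$).} The weight $e^{NG}$ defining $\mathcal H_h^{Nm}$ is built with $m=+1$ near the sink $E_s^\ast$ and $m=-1$ near the source $E_u^\ast$, and the combination of $N>c_0/\gamma_{\rm min}$ with $\Re\lambda(h)/h\to\nu\in[-c_0,c_1]$ precisely meets the threshold conditions required by the semiclassical radial source and sink estimates \cite[Theorems E.52, E.54]{dyatlovzworskibook} (which apply verbatim in our calculus by the remark at the end of Section \ref{sec:calculus}). Applied to $u_h$ in view of the small-remainder bound $\|P_h(\lambda)u_h\|_{\mathcal H_h^{Nm}}=o(h^2)$, these estimates force $\mu$ to vanish on a fiber-conic neighborhood $\mathcal U$ of $E_u^\ast\cup E_s^\ast$ at fiber infinity (propagation of singularities along $\Phi_t$ fills in the heteroclinic sky between source and sink). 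Now let $(x_0,\xi_0)\notin \Gamma_+(\Lambda)$. Lemma \ref{supportmu} handles the case $p_0(x_0,\xi_0)\neq \Lambda$; otherwise $\xi_0-\Lambda\alpha(x_0)$ has nonzero $E_s^\ast$-component, so $\Phi_T(x_0,\xi_0)\in\mathcal U$ for $T$ sufficiently large. Choosing a $\mu$-null open neighborhood $W$ of $\Phi_T(x_0,\xi_0)$, the quasi-invariance yields $\mu(\Phi_{-T}(W))=e^{2\nu T}\mu(W)=0$, whence $(x_0,\xi_0)\notin \supp\mu$.

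\emph{Step 2 (positivity).} The same radial estimates rule out escape of mass to fiber infinity, so that $\mu(T^\ast\M)=\lim_{h\to 0}\|u_h\|_{\mathcal H_h^{Nm}}^2=1$. Combined with Step 1 and inner regularity, some compact $K\subset\Gamma_+(\Lambda)$ has $\mu(K)>0$. Since every $p\in\Gamma_+(\Lambda)$ satisfies $\Phi_t(p)\to K_\Lambda$ as $t\to+\infty$ uniformly on compact sets, one can pick $T>0$ so that $\Phi_T(K)\subset V$. The quasi-invariance then yields
\[
\mu(V\cap\Gamma_+(\Lambda))=\mu(V)\geq \mu(\Phi_T(K))=e^{-2\nu T}\mu(K)>0.
\]

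\emph{Main obstacle.} The delicate part is promoting the radial estimates into the desired measure-theoretic conclusions uniformly in $h\in D$ and $n(h)$: one must check that the threshold $N>c_0/\gamma_{\rm min}$ is strong enough to absorb the full subprincipal contribution (including the conjugation damping $NH_{p_0}G$ and the extremal values of $\nu$), and that the escape-prevention bound is sharp enough to give $\mu(T^\ast\M)=1$ rather than merely $\leq 1$. All the ingredients are present in Section \ref{sec:calculus} and \cite{FS11,dyatlovzworskibook}, but the assembly into a clean measure-level statement that remains uniform in the tensor-power parameter $n(h)$ is where the technical care of the proof is concentrated.
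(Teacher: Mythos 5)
Your proof runs into several genuine problems, centered on the geometry of the radial dynamics and on the positivity argument.

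\textbf{Source/sink mislabeling and the non-vanishing of $\mu$ near the sink.} Under $\Phi_t$ (forward in $t$), covectors in $E_s^*$ contract, so $E_s^*\cap\partial\bar T^*\M$ is the \emph{source} $L$, and covectors in $E_u^*$ expand, so $E_u^*\cap\partial\bar T^*\M$ is the \emph{sink} $L'$ --- the opposite of what you write. More importantly, your Step 1 asserts that the radial estimates ``force $\mu$ to vanish on a fiber-conic neighborhood of $E_u^*\cup E_s^*$ at fiber infinity.'' This is only true near the source $L=E_s^*\cap\partial\bar T^*\M$, via \cite[Theorem E.52]{dyatlovzworskibook}. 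The radial sink estimate \cite[Theorem E.54]{dyatlovzworskibook} does \emph{not} annihilate mass near $L'$; it only bounds it by the mass on a set disjoint from $L'$. And indeed $\overline{\Gamma_+(\Lambda)}$ touches $L'$, so forcing $\mu$ to vanish near $L'$ would be inconsistent with the first assertion of the lemma.

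\textbf{Flow direction errors.} For $(x_0,\xi_0)\in\{p_0=\Lambda\}\setminus\Gamma_+(\Lambda)$, write $\xi_0=\Lambda\alpha(x_0)+\xi_s^*+\xi_u^*$ with $\xi_s^*\neq 0$. Under $\Phi_T$ with $T\to+\infty$, the $E_s^*$-part contracts; so if $\xi_u^*=0$, the trajectory converges to $K_\Lambda$ (where you want to show $\mu>0$, not $\mu=0$), and if $\xi_u^*\neq 0$ it escapes to the \emph{sink}, where, as just noted, $\mu$ need not vanish. The correct direction is $\Phi_{-T}$: the $\xi_s^*$-part then expands and the trajectory tends to the \emph{source} $L$, where $\mu$ vanishes by the source estimate. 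Similarly, in Step 2 a point $p\in\Gamma_+(\Lambda)$ with $\xi=\Lambda\alpha+\xi_u^*$ is pushed to the sink by $\Phi_t$ as $t\to+\infty$; one needs $t\to-\infty$ for $\Phi_t(p)\to K_\Lambda$. The quasi-invariance formula still gives positivity after fixing the sign, but the geometric statement is wrong as written.

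\textbf{The positivity step.} Even after correcting the flow direction, Step 2 hinges on $\mu(T^*\M)=1$, which you assert the radial estimates give. They do not: the sink estimate only controls mass near $L'$ \emph{in terms of mass elsewhere}, and since $\|u_h\|_{\mathcal H_h^{Nm}}=1$ weighs $u_h$ by $F^{-N}$ near $E_u^*$, mass can in principle escape to fiber infinity along the sink without being seen by compactly supported test symbols. The paper avoids this entirely by arguing by contradiction: suppose there is $A_K$ elliptic near $K_\Lambda$ with $\|A_Ku_h\|=o(1)$; then the elliptic estimate, source estimate, propagation of singularities \cite[Theorem E.47]{dyatlovzworskibook}, and finally the sink estimate (which needs the others as input) together yield $\|u_h\|_{\mathcal H_h^{Nm}}=o(1)$, contradicting $\|u_h\|=1$. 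This route never needs $\mu(T^*\M)=1$ and, crucially, uses the sink estimate to control the mass at $L'$ by the mass propagated from $K_\Lambda$ and the source --- not to eliminate it. You would need to replace your Step 2 by this propagation/contradiction scheme.
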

\begin{proof} 
We apply  the high-regularity 
radial estimate \cite[Theorem E.52]{dyatlovzworskibook} to the  semiclassical operator  $-iP_{h}(\la)$. First, we observe that $\Gamma_-(\Lambda)$ 
is a radial source for $p_0+\Lambda$: indeed, the function $p$ from \eqref{eq:princsymplambd} is real and, recalling  \eqref{Hpomega=Hp}, the flow of $H^{\omega_{1-|\Lambda|}}_p=H^{\omega_0}_{p_0}$  is simply given by 
\[ \Phi_t: (x,\xi)\mapsto (\varphi_t(x), 
(d\varphi_t(x)^{-1})^T\xi).\]
Let $\bbar{T}^*\mc{M}$ be the radial compactification in the fibers of $T^*\mc{M}$.
The hyperbolicity of the vector field $X$ implies that $L:=E_s^*\cap \pl \bbar{T}^*\mc{M}$ is a hyperbolic repulsor for the flow $\Phi_t$ viewed on $\bbar{T}^*\mc{M}$. It is then a radial source in the sense of \cite[Definition E.50]{dyatlovzworskibook}.  The idea of this radial source propagation estimate is that the negativity of some subprincipal term in $P_h(\la)$ allows to control the microlocal mass of $u_h$ near $L$ by the microlocal mass of $P_h(\la)u_h$ in a slightly bigger neighborhood. The hyperbolicity of the flow is crucial there.
A function $\nu_h$ representing the symbol $\sigma_{h}(h^{-1}{\rm Im}(-iP_{h}(\la)))$ is given by $\nu_h:=-{\rm Re}(\la)/h \in [-c_1,c_0]$.  Let us now check that the eventual negativity of some subprincipal term of $P_h(\la)$ required in \cite[Theorem E.52]{dyatlovzworskibook} is satisfied.
Since  $N>\frac{c_0}{\gamma_\mathrm{min}}$, for all $0<\eps<N\gamma_{\min}-c_0$ there is $T>0$ large enough such that for all $\xi$ near $L$ in the radial compactification $\bbar{T}^*\mc{M}$ and all $h\in D$
\begin{equation}\label{eventual<0}
\begin{split}
\int_0^T \Big(\nu_h+N\frac{H_{p}^{\omega_{1-|\Lambda|}}\cjg \xi\cjd}{\cjg\xi\cjd}\Big)\circ \Phi_t \, dt&= T\nu_h+N\int_0^T \pl_t\log(\cjg \xi\circ \Phi_t\cjd) dt \\
 &=T\nu_h+N\log \cjg \xi \circ d\varphi_T^{-1})\\
 &\leq  T(\nu_h-N\gamma_{\min}+\eps)<0
\end{split}\end{equation}
(recall that $\gamma_{\min}$ is defined in \eqref{eq:minexpansionrate}).
Now, choose $B\in \Psi^0_{h}(\mc{M};\mc{L}^{n(h)})$ such that $\mathrm{ell}_h(B)\supset L$ and $BP_{h}(\la)u_h\in H_h^N(\M;\L^{n(h)})$.  This is possible by the assumption that $P_{h}(\la)u_h\in\mc{H}_h^{Nm}(\M;\L^{n(h)})$ and the construction of $\mc{H}_h^{Nm}(\M;\L^{n(h)})$, which is near $E_s^*$  microlocally equivalent to $H_h^N(\M;\L^{n(h)})$. We can then  conclude the following from \cite[Theorem E.52, Exercise E.35]{dyatlovzworskibook}: 
there is $A_L\in \Psi^0_{h}(\mc{M};\mc{L}^{n(h)})$ with ${\rm WF}_h(1-A_L)\cap L=\emptyset$ and some $C>0$ such that for all $h\in D$
\begin{equation}\label{estimeesource} 
\|A_Lu_{h}\|_{\mc{H}_h^{Nm}(\mc{M};\L^{n(h)})}\leq Ch^{-1}\|BP_{h}(\la)u_h\|_{\mc{H}_h^{Nm}(\M;\L^{n(h)})}
+Ch^{N}\|u_h\|_{H_h^{-N}(\M;\L^{n(h)})}=o(h).
\end{equation}
Here we used \eqref{Phu_jnewcond} and that $\mc{H}_h^{Nm}(\mc{M};\L^{n(h)})$ is microlocally equivalent to $H_h^{N}(\mc{M};\L^{n(h)})$ near $E_s^*$.
 This implies that  for each $A\in \Psi^{\rm comp}_h(\mc{M};\L^{n(h)})$ with ${\rm WF}_h(A)\subset \{(x,\xi)\in T^*\mc{M}\,|\, \sigma_h(A_L)=1\}$, we have as $h\to 0$ 
\[ \cjg Au_h,u_h\cjd=\cjg AA_Lu_h,A_Lu_h\cjd+\mc{O}(h)\leq C\|A_Lu_h\|^2_{H_h^s(\M;\L^{n(h)})}+\mc{O}(h)\to 0,\]
which shows that $\supp(\mu)\cap U=\emptyset$ for some  small neighborhood $U$ of $L$ in $\bbar{T}^*\mc{M}$. Using the invariance of the support of $\mu$ by the Hamilton flow $\Phi_t$ of $H_{p_0}^{\omega_0}$ implied by  \eqref{eq:mupshfwd}, we deduce that $\supp(\mu)\cap V=\emptyset$ for every open set $V$ for which there is $t\in \rr$ such that $\Phi_t(V)\subset U$.
Since $L$ is a hyperbolic repulsor (source) in $\bbar{T}^*\mc{M}$ for the flow $\Phi_t$, around each point 
$(x,\xi)\in \{ 
\xi(X(x))=\Lambda\}\setminus \Gamma_+(\Lambda)$ there is a small ball $B(x,\xi)$ and $T>0$ large
such that $\Phi_{-T}(B(x,\xi))\subset U.$ Combining this with Lemma \ref{supportmu} implies the claim about the support of $\mu$. 

Let us next show that $\mu$ can not vanish near $K_\Lambda$. First, we shall use microlocal ellipticity to show that there is no microlocal mass of $u_h$ outside the characteristic set $\{\xi(X)=\Lambda\}$.
By \cite[Theorem E.33]{dyatlovzworskibook}, let $A_0\in \Psi_h^0(\mc{M};\L^{n(h)})$
be such that $P_h(\la)$ is semiclassically elliptic on ${\rm WF}_h(A_0)$, i.e., if ${\rm WF}_h(A_0)\subset \bbar{T}^*\mc{M}\setminus \{\xi(X)=\Lambda\}$, and ${\rm WF}(1-A_0)$ supported close to $\{\xi(X)=\Lambda\}$ in $\bbar{T}^*\mc{M}$, then there is $C>0$ such that for all $h\in D$ small
\[\|A_0 u_h\|_{\mc{H}^{Nm}_h(\M;\L^{n(h)})}\leq C\|P_h(\la) u_h\|_{\mc{H}_h^{Nm}(\M;\L^{n(h)})}+Ch^N\|u_h\|_{\mc{H}_h^{Nm}(\M;\L^{n(h)})}=o(h^{2}).\]

To analyze the mass of $u_h$ near the unstable bundle at infinity $L':=E_u^*\cap \pl \bbar{T}^*\mc{M}$, we shall use the radial sink estimates \cite[Theorem E.54]{dyatlovzworskibook}. Under a negativity assumption on some subprincipal term of $P_h(\la)$ on $L'$, we can control the microlocal mass of $u_h$ near $L'$ by the microlocal mass of $P_h(\la)u_h$ in some closed
set not intersecting $L'$ in $\bbar{T}^*\mc{M}$. First, we need to check the following eventual negativity: there is $T>0$ large enough so that for all $(x,\xi)$ near $L'$
 \[\int_0^T \Big(\nu_h-N\frac{H_{p}^{\omega_{1-|\Lambda|}}\cjg \xi\cjd}{\cjg\xi\cjd}\Big)\circ \Phi_t \, dt<0\]
But this follows exactly as in \eqref{eventual<0} under the assumption that $N\nu_{\min}>c_0$.
The radial estimate for the sink $L':=E_u^*\cap \pl \bbar{T}^*\mc{M}$ \cite[Theorem E.54]{dyatlovzworskibook} says that for each  $B_1\in \Psi_h^{0}(\mc{M};\L^{n(h)})$  with $\mathrm{ell}_h(B_1)\supset L'$  there are $A_{L'},B_{L'}\in \Psi_h^{0}(\mc{M};\L^{n(h)})$ with  ${\rm WF}_h(1-A_{L'})$ not intersecting $L'$ and $\WF_h(B_{L'})\subset \mathrm{ell}_h(B_1)\setminus L'$ such that if $h$ is small enough one has
\begin{equation}\label{radialsink} 
\begin{split}
\|A_{L'}u_h\|_{\mc{H}_h^{Nm}}&\leq  Ch^{-1}\|B_1P_h(\la)u_h\|_{\mc{H}_h^{Nm}}+C\|B_{L'}u_h\|_{\mc{H}_h^{Nm}}+Ch^N\\
&\leq  C\|B_{L'}u_h\|_{\mc{H}_h^{Nm}}+o(h)
\end{split}
\end{equation}
for some $C>0$ independent of $h$, where we used that $\mc{H}_h^{Nm}(\mc{M};\L^{n(h)})$ is microlocally equivalent to $H_h^{-N}(\mc{M};\L^{n(h)})$ near $L'$ and we can assume ${\rm WF}_h(A_{L'})$ to be contained in a small neighborhood of $L'$.

Assume now that there is $A_K\in \Psi^{\rm comp}_h(\mc{M};\L^{n(h)})$ with ${\rm WF}_h(1-A_K)\cap K_\Lambda=\emptyset$ such that $\|A_Ku_h\|_{\mc{H}_h^{Nm}(\mc{M};\L^{n(h)})}=o(1)$ for some subsequence $h\in D$ going to $0$. We note that by choosing $T$ large enough,
\[ Z_T:=\bigcup_{t\in [-T,T]}\Phi_t({\rm ell}_h(A_0)\cup {\rm ell}_h(A_L)\cup {\rm ell}_h(A_{K}))\]
is such that $\bbar{T}^*\mc{M}\setminus Z_T$ is a small neighborhood of $L'$ 
not intersecting ${\rm WF}_h(B_{L'})$ and contained in the region where $A_{L'}=1$ microlocally. 
This means that there is $T>0$ large enough so that $\Phi^{-T}(\WF(B_{L'}))\subset Z_T$ and we can thus control, by propagation of singularities applied to $P_h(\lambda)u_h$, the  mass of $B_{L'}u_h$ by that of $P_h(\lambda)u_h$ and of $A_{\bullet}u_h$ for $\bullet\in \{K,L,0\}$.
Indeed, by
\cite[Thm.\ E.47]{dyatlovzworskibook}, there is $C>0$ such that 
\[ \|B_{L'}u_h\|_{\mc{H}_h^{Nm}}\leq C\|A_Ku_h\|_{\mc{H}_h^{Nm}}+
C\|A_Lu_h\|_{\mc{H}_h^{Nm}}+C\|A_0u_h\|_{\mc{H}_h^{Nm}}+C\|P_h(\la)u_h\|_{\mc{H}_h^{Nm}}=o(1).\]
 Here we argue as in the proof of \cite[Proposition 3.4]{DZ16} by first conjugating $P_h$ by ${\rm Op}_h(e^{NG})$ so that the propagation estimate is done on $L^2$.
Similarly for $A_R:=1-A_K-A_L-A_0-A_{L'}$
\[\|A_Ru_h\|_{\mc{H}_h^{Nm}}\leq C\|A_Ku_h\|_{\mc{H}_h^{Nm}}+
C\|A_Lu_h\|_{\mc{H}_h^{Nm}}+C\|A_0u_h\|_{\mc{H}_h^{Nm}}+C\|P_h(\la)u_h\|=o(1).\]
Thus, we obtain using \eqref{radialsink}
\[ \|A_{L'}u_h\|_{\mc{H}_{h}^{Nm}}=o(1),\]
which finally leads to $\|u_h\|_{\mc{H}_h^{Nm}(\mc{M};\L^{n(h)})}=o(1)$, leading to a contradiction.
This shows that $\mu(V)>0$ for some neighborhood $V$ of $K_\Lambda$.
 \end{proof}

\subsection{Spectral gap}\label{sec:gap} We assume now that ${\bf M}=\gam \H^3$ is a compact hyperbolic $3$-manifold and that the line bundle $\L$ over $\M=S{\bf M}=\gam G/M$ is of the form 
\bq
\L=\L^{n_0}=\gam G\times_{\varrho_{n_0}}\C,\qquad n_0\in\{-1,0,1\}
\label{eq:Lform}
\eq 
in the notation of Section \ref{sec:bundles};  when $n_0=0$, $\mc{L}^0=\mc{M}\times \cc$ is the trivial bundle since $\rho_0$ is the trivial representation. We equip $\L$ with the connection $\nabla$ from \eqref{eq:nabla}.
\begin{lemma}\label{lem:checkcondition}$(\L,\nabla)$ fulfills the condition \eqref{eq:OmegaX}.
\end{lemma}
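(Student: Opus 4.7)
My plan is to reduce everything to a Maurer--Cartan computation on $G$ using left-invariant structures, and exploit the specific commutation relations \eqref{commutation_relations_XUR}.

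First I would handle the case $n_0=0$ trivially: there $\varrho_0$ is the trivial representation, so $\mathcal{L}^0=\mathcal{M}\times\C$ carries the trivial flat connection and $\Omega=0$, which certainly satisfies $\iota_X\Omega=0$. So I only have to work with $n_0=\pm 1$.

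For these cases, the key observation is that the connection $\nabla$ on $\mathcal{L}^{n_0}$ comes from the principal $\SO(2)$-connection on $G\to G/M$ whose connection $1$-form $\Theta\in\Omega^1(G,\m)$ is characterized by $\Theta(R)=R$ and $\Theta|_{\aL\oplus\nL^+\oplus\nL^-}=0$. Since $\m$ is one-dimensional and abelian, the principal curvature reduces to $d\Theta$, and the curvature of the associated line bundle $\mathcal{L}^{n_0}$ is (up to the constant factor $d\varrho_{n_0}(R)=-in_0$) the descent of $d\Theta$ to $\M$. So it is enough to verify $\iota_X d\Theta=0$ on $G$ for $X$ the left-invariant vector field.

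I would then invoke the Maurer--Cartan-type identity: for left-invariant vector fields $Y,Z$ on $G$, one has $d\Theta(Y,Z)=-\Theta([Y,Z])$ (the two derivative terms vanish because $\Theta(Y)$ and $\Theta(Z)$ are constants in $\m$). Applied with $Y=X$ and $Z$ ranging over the basis $\{X,R,U_1^\pm,U_2^\pm\}$, the commutation relations \eqref{commutation_relations_XUR} give $[X,X]=[X,R]=0$ and $[X,U_j^\pm]=\pm U_j^\pm\in\nL^\pm$; in every case $[X,Z]$ lies in $\aL\oplus\nL^+\oplus\nL^-$, on which $\Theta$ vanishes. Hence $d\Theta(X,Z)=0$ for all left-invariant $Z$, which by left-invariance means $\iota_X d\Theta=0$ pointwise on $G$. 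Passing this identity to the quotient $\M=\gam G/M$ (using that $X$, $R$, and $\Theta$ are $\Gamma$-invariant and $M$-equivariant in the appropriate sense) yields $\iota_X\Omega=0$ on $\M$.

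I do not expect a real obstacle here; the only thing worth being careful about is bookkeeping the descent of $d\Theta$ from $G$ to the curvature on $\mathcal{L}^{n_0}$ over $\M$, and noting that the horizontal lift of the geodesic vector field from $\M$ to $G$ is precisely the left-invariant $X$ (which follows because $X$ is horizontal for $\Theta$ and $M$-invariant, as $[X,R]=0$). Everything else is a one-line check from the commutation table.
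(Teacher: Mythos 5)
Your proof is correct, but it takes a genuinely different route from the paper's. The paper argues \emph{locally}: it builds a local unit section $\bf s$ of $\mathcal L^{n_0}$ with $\nabla_X{\bf s}=0$ by parallel transport along flow lines (the key input is that $\nabla$ is Hermitian so parallel transport preserves length), lifts the resulting local connection $1$-form $\vartheta$ to $F{\bf M}$, and then computes $\Omega(X,d\tilde\pi U_j^\pm)=d\tilde\vartheta(X,U_j^\pm)=(X\mp 1)(\tilde\vartheta(U_j^\pm))=0$ using $\tilde\vartheta(X)=0$ and the commutators $[X,U_j^\pm]=\pm U_j^\pm$. You instead argue \emph{globally and algebraically}: you identify the curvature of $\mathcal L^{n_0}$ with (a constant multiple of) the descent of $d\Theta$, where $\Theta$ is the left-invariant principal connection form on $G\to G/M$, and then use the Maurer--Cartan identity $d\Theta(Y,Z)=-\Theta([Y,Z])$ for left-invariant $Y,Z$, which reduces the claim to the single observation that $[X,Z]\in\aL\oplus\nL^+\oplus\nL^-=\ker\Theta$ for every $Z$ in the basis. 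Both proofs ultimately rest on the same structural fact from \eqref{commutation_relations_XUR}, namely that $\ad(X)$ preserves $\ker\Theta$, but your version makes this more transparent and avoids the local trivialization and the parallel-transport construction entirely. The trade-off is that your argument leans on the homogeneous-space structure (left-invariance of $\Theta$, $X$, and the basis fields), whereas the paper's local argument is phrased in a way that is more portable should one ever want to adapt it beyond the constant-curvature case. One small bookkeeping point worth spelling out in a final write-up: after establishing $\iota_X d\Theta=0$ on $G$, pass to $\M$ by noting $\tilde\pi^*\Omega_\nabla=-in_0\,d\Theta$ and $d\tilde\pi X=X$, so $\tilde\pi^*(\iota_X\Omega_\nabla)=0$ and surjectivity of $d\tilde\pi$ forces $\iota_X\Omega_\nabla=0$; you gesture at this but it deserves a sentence.
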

\begin{proof}
Let $x\in \M$. First, we claim that there is a neighborhood $W\subset \M=S{\bf M}$ around $x$ on which $\L$ is trivialized by a section ${\bf s}\in \Cinft(W;\L)$ such that $
\nabla_X {\bf s}=0$: indeed, fixing a transverse slice $S$
 to $X$ containing $x$, one can take ${\bf s}_0$ to be a unit length section of $\mc{L}|_{S}$ and the equation $\nabla_X{\bf s}=0$ with boundary condition ${\bf s}|_S={\bf s}_0$ can be solved by the method of characteristics (this is parallel transport along flow lines of $X$); the solution ${\bf s}$ is of unit length since $\nabla$ preserves the Hermitian norm on $\mc{L}$. Now, let  $\tilde{\pi}:F{\bf M}\to S{\bf M}$ be the projection, and $\widetilde W:=\tilde{\pi}^{-1}(W)\subset F{\bf M}$. Then, if $\vartheta$ is the local connection $1$-form of $\nabla$ on $W$ defined by $\nabla {\bf s}=\vartheta\otimes {\bf s}$ and $\tilde \vartheta=\tilde{\pi}^\ast \vartheta$ is its lift to a $1$-form on  $\widetilde W$, we have $\tilde \vartheta(X)=0$ and the commutation relations \eqref{commutation_relations_XUR} imply $X(\tilde \vartheta(U_j^\pm))=\pm\tilde \vartheta(U_j^\pm)$. This gives us for $j=1,2$
\bqn
\Omega(X,d\tilde{\pi} U_j^\pm)=d\tilde \vartheta(X,U_j^\pm)
=X(\tilde \vartheta(U_j^\pm))-U_j^\pm(\tilde \vartheta(X))-\tilde \vartheta([X,U_j^\pm])=(X\mp 1)(\tilde \vartheta(U_j^\pm))=0.
\eqn
As $d\tilde{\pi}$ is surjective, $d\tilde{\pi} X=X$ and $d\tilde{\pi} R=0$, and the vector fields $X,R$ and $U_j^\pm$ span $T(F{\bf M})$, this proves \eqref{eq:OmegaX}.
\end{proof}
The first tool developed in this section is the following technical local result:
\begin{lemma}\label{lem:Ubf}
For each $x\in \mc{M}=S{\bf M}$, there is a neighborhood $W$ of $x$ in $\M$ on which there exist vector fields ${\bf U}^\pm_j$, $j=1,2$, such that ${\bf U}^+_1,{\bf U}^+_2$ span $E_s|_W$, ${\bf U}^-_1,{\bf U}^-_2$ span $E_u|_W$, and the formal adjoints of $\nabla_{{\bf U}_j^\pm}:\Cinft(W;\L^{n_0})\to \Cinft(W;\L^{n_0})$ are given by
\bq
\nabla_{{\bf U}_j^\pm}^\ast=-\nabla_{{\bf U}_j^\pm} - f_j^\pm,\qquad j=1,2, \label{eq:nablaUadj}
\eq
with some smooth functions $f_1^\pm,f_2^\pm:W\to \R$.  
Furthermore, the functions $\varphi^\pm_j:T^\ast W\to \R$ defined by 
$
\varphi^\pm_j(x,\xi):=\xi(\mathbf{U}_j^\pm(x))
$ 
fulfill for every $\rho \in \R$
\bq\begin{split}
H^{\omega_\rho}_{p_0}\varphi_j^\pm &= \pm \varphi_j^\pm,\\ 
\{\varphi^\pm_1,\varphi^\pm_2\}_{\omega_\rho}&=F_2^\pm\varphi^\pm_1-F_1^\pm\varphi^\pm_2,\\
\{\varphi_1^\pm,\varphi_2^\mp\}_{\omega_\rho} &= -\frac{n_0 \rho}{2} + F^\pm_2\varphi_1^\mp - F^\mp_1\varphi_2^\pm,\\
  \{\varphi_j^+,\varphi_j^-\}_{\omega_\rho} &=  \frac{p_0}{2} + F^-_{j'}\varphi_{j'}^+ - F^+_{j'}\varphi_{j'}^-,\qquad   1':=2,\quad 2':=1,\label{eq:commq_j}
\end{split}
\eq
where $\omega_\rho$ is the symplectic form defined in \eqref{eq:omega_rho}, $n_0\in \{-1,0,1\}$ the weight of the representation defining $\L$ in  \eqref{eq:Lform}, $p_0:T^\ast W\to \R$ the function defined in  \eqref{eq:princsymb} restricted to $T^\ast W$, and $F_j^\pm=f_j^\pm\circ \pi$, $\pi:T^\ast W\to W$ being the cotangent bundle projection.
\end{lemma}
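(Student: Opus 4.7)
The plan is to construct ${\bf U}_j^\pm$ using a local section of $\tilde\pi:F{\bf M}\to S{\bf M}$, derive the adjoint formula by integration by parts, and then compute the Poisson brackets from a general identity combined with the commutation relations \eqref{commutation_relations_XUR}. Around any $x_0\in\mc{M}$, I would pick a local section $s:W\to F{\bf M}$ of $\tilde\pi$ and set ${\bf U}_j^\pm(x):=\tfrac12\,d\tilde\pi_{s(x)}(U_j^\pm|_{s(x)})$ for $x\in W$. Since $\nL^\pm=\mathrm{span}_\R(U_1^\pm,U_2^\pm)$ projects to $E_{s/u}$ (end of Section \ref{eq:algdescr}), ${\bf U}_1^+,{\bf U}_2^+$ span $E_s|_W$ and ${\bf U}_1^-,{\bf U}_2^-$ span $E_u|_W$; the factor $\tfrac12$ is tuned so that the last Poisson bracket yields the coefficient in front of $p_0$. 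A conformal adjustment of the frame along the flow lines of $X$---using that $X$ preserves $E_{s/u}$ and that $[X,U_j^\pm]=\pm U_j^\pm$ in $\g$---arranges $[X,{\bf U}_j^\pm]=\pm{\bf U}_j^\pm$ exactly. For the adjoint, Hermiticity of $\nabla$ gives ${\bf U}_j^\pm\cjg u,v\cjd_{\L^{n_0}}=\cjg\nabla_{{\bf U}_j^\pm}u,v\cjd+\cjg u,\nabla_{{\bf U}_j^\pm}v\cjd$, while the divergence theorem yields $\int{\bf U}_j^\pm g\,d\mathrm{vol}=-\int\div({\bf U}_j^\pm)\,g\,d\mathrm{vol}$; combining, $\nabla_{{\bf U}_j^\pm}^\ast=-\nabla_{{\bf U}_j^\pm}-\div({\bf U}_j^\pm)$, so I would set $f_j^\pm:=\div({\bf U}_j^\pm)$ and $F_j^\pm:=f_j^\pm\circ\pi$.

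The central ingredient for the Poisson bracket identities is the formula
\[
\{\varphi_Y,\varphi_Z\}_{\omega_\rho}=\varphi_{[Y,Z]}+i\rho\,\Omega_\nabla(Y,Z),
\]
valid for arbitrary vector fields $Y,Z$ on $W$, which I would derive from $\omega_\rho=\omega_0+i\rho\,\pi^\ast\Omega_\nabla$ by decomposing $H_{\varphi_Y}^{\omega_\rho}=H_{\varphi_Y}^{\omega_0}+V$ with vertical $V$ on $T^\ast W$ determined by $\iota_V\omega_0=-i\rho\,\pi^\ast(\iota_Y\Omega_\nabla)$; one then checks $V(\varphi_Z)=i\rho\,\Omega_\nabla(Y,Z)$ while $\{\varphi_Y,\varphi_Z\}_{\omega_0}=\varphi_{[Y,Z]}$ is the usual calculation for symbols linear in $\xi$. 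Applied with $Y=X$: Lemma \ref{lem:Hp} combined with Lemma \ref{lem:checkcondition} gives $H_{p_0}^{\omega_\rho}=H_{p_0}^{\omega_0}$, so $H_{p_0}^{\omega_\rho}\varphi_j^\pm=\varphi_{[X,{\bf U}_j^\pm]}=\pm\varphi_j^\pm$ by the frame choice. The curvature contributions are computed from the principal connection $\Theta$ on $F{\bf M}\to S{\bf M}$: on horizontal vectors one has $d\Theta(U_i^\pm,U_j^\pm)=0$ and $d\Theta(U_1^\pm,U_2^\mp)=-\Theta([U_1^\pm,U_2^\mp])=-2$, while $d\varrho_{n_0}(R)=-in_0$ yields $\Omega_\nabla=-in_0\,d\Theta|_{\text{horizontal}}$; with the $\tfrac14$ normalization coming from ${\bf U}_j^\pm=\tfrac12 d\tilde\pi(U_j^\pm|_{s(W)})$, the only nonzero curvature correction is $i\rho\,\Omega_\nabla({\bf U}_1^\pm,{\bf U}_2^\mp)=-n_0\rho/2$.

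For the three remaining bracket formulas, I would lift to $F{\bf M}$ via $s$ and apply \eqref{commutation_relations_XUR}: $[U_1^\pm,U_2^\pm]=0$ together with integrability of $E_{s/u}$ forces $[{\bf U}_1^\pm,{\bf U}_2^\pm]\in\Gamma(E_{s/u})$, so it expands as $F_2^\pm{\bf U}_1^\pm-F_1^\pm{\bf U}_2^\pm$; the relation $[U_1^\pm,U_2^\mp]=2R$ is purely vertical and projects to zero, so $[{\bf U}_1^\pm,{\bf U}_2^\mp]$ has no $X$-component, and the $\Ad(M)$-structure of the algebra decomposition further constrains it to $\mathrm{span}({\bf U}_1^\mp,{\bf U}_2^\pm)$; and $[U_j^+,U_j^-]=2X$ projects to $X/2$ after the $\tfrac14$ normalization, with a horizontal remainder in $\mathrm{span}({\bf U}_{j'}^+,{\bf U}_{j'}^-)$. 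The hard part will be identifying these horizontal-remainder coefficients with the divergence functions $f_j^\pm=\div({\bf U}_j^\pm)$ fixed in the adjoint step---and in particular verifying the non-obvious absence of ${\bf U}_1^\pm$- and ${\bf U}_2^\mp$-components in $[{\bf U}_1^\pm,{\bf U}_2^\mp]$. Carrying this out will require delicate bookkeeping with horizontal lifts, the structure equation $\Theta([\widetilde Y,\widetilde Z])=-d\Theta(\widetilde Y,\widetilde Z)$ for horizontal $\widetilde Y,\widetilde Z$, and the specific algebraic structure of $\mathrm{PSO}(1,3)$.
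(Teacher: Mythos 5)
Your construction — a local section of $\tilde\pi:F{\bf M}\to S{\bf M}$ flowed along $X$, followed by pushing down $\tfrac12 U_j^\pm$ — can be made to coincide with the paper's choice, and the general Poisson-bracket identity $\{\varphi_Y,\varphi_Z\}_{\omega_\rho}=\varphi_{[Y,Z]}+i\rho\,\Omega_\nabla(Y,Z)$ together with the divergence formula for $\nabla_Y^\ast$ are both correct and capture the paper's reasoning. However, you defer exactly the computation that carries the entire content of \eqref{eq:commq_j}: verifying that, in all four commutators $[{\bf U}_1^\pm,{\bf U}_2^\pm]$, $[{\bf U}_1^\pm,{\bf U}_2^\mp]$, $[{\bf U}_j^+,{\bf U}_j^-]$, the horizontal parts expand in the frame $\{{\bf U}_1^+,{\bf U}_2^+,{\bf U}_1^-,{\bf U}_2^-\}$ with coefficients that are precisely the divergence functions $\div({\bf U}_j^\pm)$, in the particular pattern recorded in the lemma (including the absence of ${\bf U}_1^\pm,{\bf U}_2^\mp$-components in $[{\bf U}_1^\pm,{\bf U}_2^\mp]$ and the swap $j\mapsto j'$ in the last line). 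This is not a routine bookkeeping step: for a generic Anosov frame adapted to $E_u,E_s$ it is simply false that $[{\bf U}_1^-,{\bf U}_2^-]=\div({\bf U}_2^-){\bf U}_1^-\!-\div({\bf U}_1^-){\bf U}_2^-$, so the identity must be extracted from the specific choice of frame, and your sketch offers no mechanism for doing so.

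The paper sidesteps this difficulty by a sharper choice of local frame: instead of pushing down $U_j^\pm$ through an arbitrary section, it takes unit-norm sections ${\bf s}^\pm$ of $\L^{\pm 1}$ with $\nabla_X{\bf s}^\pm=0$ (so $R\tilde s^\pm=\pm i\,\tilde s^\pm$, $X\tilde s^\pm=0$ on $F{\bf M}$), and defines ${\bf U}_1^-+i{\bf U}_2^-:=\tilde s^-\eta_+$, ${\bf U}_1^++i{\bf U}_2^+:=\tilde s^-\mu_+$ with $\eta_\pm=\tfrac12(U_1^-\pm iU_2^-)$, $\mu_\pm=\tfrac12(U_1^+\pm iU_2^+)$. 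This single choice does three things simultaneously. First, because $\eta_\pm^\ast=-\eta_\mp$ under the Haar measure, one gets $({\bf s}^\mp\eta_\pm)^\ast=-{\bf s}^\pm\eta_\mp-\eta_\mp({\bf s}^\pm)$, and hence $f_1^-=\Re(\eta_+{\bf s}^-)$, $f_2^-=\Im(\eta_+{\bf s}^-)$ by passing to real/imaginary parts. Second, the identity $0=\eta_\pm(\tilde s^+\tilde s^-)=\tilde s^+\eta_\pm(\tilde s^-)+\tilde s^-\eta_\pm(\tilde s^+)$ together with $[\eta_+,\eta_-]=0$ immediately gives $[\nabla_{{\bf U}_1^-},\nabla_{{\bf U}_2^-}]=f_2^-\nabla_{{\bf U}_1^-}-f_1^-\nabla_{{\bf U}_2^-}$ with the \emph{same} $f_j^-$; the matching you call ``the hard part'' is automatic. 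Third, the commutators $[\tilde s^\mp\eta_\pm,\tilde s^\pm\mu_\mp]$ and $[\tilde s^\mp\eta_\pm,\tilde s^\mp\mu_\pm]$ then produce all the remaining cross-relations and the curvature coefficients $\Omega_\nabla({\bf U}_1^\pm,{\bf U}_2^\mp)=\tfrac{in_0}{2}$, $\Omega_\nabla({\bf U}_j^+,{\bf U}_j^-)=0$ from the structure constants \eqref{eq:etamucomm} with no further input. In short: the lemma is not an abstract fact about any $E_u/E_s$-adapted frame; it is an identity built into this particular complexified frame, and the proof does not work without selecting it. You should either adopt the paper's explicit construction or supply the currently missing verification.
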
 
\begin{rem} The functions $\varphi_j^\pm$ satisfy 
\[\begin{split}
\Gamma_-|_W&=(E_s^*\oplus E_0^*)|_W=\{(x,\xi)\in T^*W\,|\,\varphi_1^+(x,\xi)=\varphi_2^+(x,\xi)=0\}, \\
\Gamma_+|_W&=(E_u^*\oplus E_0^*)|_W=\{(x,\xi)\in T^*W\,|\,\varphi_1^-(x,\xi)=\varphi_2^-(x,\xi)=0\} 
\end{split}\]
and the relations \eqref{eq:commq_j} reflect the fact that, for each $\rho$ so that $n_0\rho\not=0$, 
$K=E_0^*=\Gamma_+\cap \Gamma_-$ is  a symplectic submanifold of $T^\ast \M$ with respect to the symplectic form $\omega_\rho$. In the case $n_0\rho=0$ (which contains the case of the trivial bundle $n_0=0$), $K\cap \{p_0\not=0\}$ is still a symplectic submanifold. This fact is crucial for the existence of a uniform spectral gap in what follows, indeed this is the symplectic structure of the trapped set $K$ which provides the mechanism causing that gap in a way comparable to \cite{Li04,NZ15,Ts12}.
\end{rem}
\begin{proof}[Proof of Lemma \ref{lem:Ubf}]For notational simplicity,  we prove only the statements with the ``$-$'' sign. This is justified by Remark \ref{eq:signs}: the statements with the ``$+$'' sign will be obtained by replacing $\eta_\pm$ by $\mu_\pm$ in the following, taking into account that the only difference between $\eta_\pm$ and $\mu_\pm$ with respect to the commutation relations \eqref{eq:etamucomm} is that $X\eta_\pm =-\eta_\pm$ whereas $X\mu_\pm=\mu_\pm$.  A possibly confusing point here is that the lower index $\pm$ of those elements does not correspond to the symbol $\pm$ in \eqref{eq:commq_j} and \eqref{eq:nablaUadj}. Instead, we will see that the ``$\pm$'' in $\eta_\pm$ and $\mu_\pm$ corresponds to $j=1,2$ in \eqref{eq:commq_j} and \eqref{eq:nablaUadj}. 

Let $x\in \M$. As seen in the proof of Lemma \ref{lem:checkcondition} there is a neighborhood $W\subset \M$  around $x$ on which $\L^1$ is trivialized by a section ${\bf s}^+\in \Cinft(W;\L^{1})$ of norm $1$ such that $
\nabla_X {\bf s}^{+}=0$. 
 It corresponds to a non-vanishing complex-valued function $\tilde s^+$ on the subset $\widetilde W:=\tilde{\pi}^{-1}(W)\subset F{\bf M}$ of the frame bundle $F{\bf M}=\Gamma\backslash G$, where  $\tilde{\pi}: F{\bf M}\to S{\bf M}$ is the projection. Then the complex conjugated function $\tilde s^-:=\overline{\tilde s^+}$ fulfills $\tilde s^+(w)\tilde s^{-}(w)=1$ for all $w\in \widetilde W$ and induces a local section ${\bf s}^-\in \Cinft(W;\L^{-1})$ trivializing $\L^{-1}$,  
and the functions $\tilde s^\pm$ fulfill 
\bq
R \tilde s^{\pm}=\pm  i \,\tilde s^{\pm},\qquad X \tilde s^{\pm}=0. \label{eq:equivs0}
\eq
By \eqref{eq:etamucomm} and \eqref{eq:equivs0}, the vector fields ${\tilde s}^{\mp}\eta_{\pm}$ on $F{\bf M}$ commute with $R$, so they descend to vector fields
\bq
{\bf s}^{\mp}\otimes\eta_{\pm} := d\tilde{\pi}({\tilde s}^{\mp}\eta_{\pm })= \frac{1}{2}d\tilde{\pi}({\tilde s}^{\mp}(U_1^-\pm i\,U_2^-))\label{eq:smpetapm}
\eq
on $\M=S{\bf M}$.  
As the two vector fields ${\tilde s}^{\mp 1}(U_1^-\pm iU_2^-)$ span the complexified lifted unstable bundle $\cc \til{E}_u|_{\widetilde W}=(d\tilde{\pi})^{-1}(\cc E_u|_W)$ and $\til{E}_u\cap \R R=0$, the vector fields ${\bf s}^{-}\eta_{+},{\bf s}^+\eta_{-}$  span $\cc E_u|_W$, and recalling that $\overline{\tilde s^\pm}=\tilde s^{\mp}$, we see that the real vector fields 
\bq
{\bf U}^-_1:=\mathrm{Re}({\bf s}^{-}\eta_{+})= \mathrm{Re}({\bf s}^{+}\eta_{-}),\quad {\bf U}^-_2:=\mathrm{Im}({\bf s}^{-}\eta_{+})=-\mathrm{Im}({\bf s}^{+}\eta_{-}) \label{eq:Ubfdef}
\eq
span $E_u|_W$. As mentioned above, ${\bf U}^+_j$ are defined similarly using $\mu_\pm$ instead of $\eta_\pm$. 

To prove \eqref{eq:commq_j}, we consider commutators of the operators $\nabla_{{\bf U}_j^-}$ with each other and with $\Xbf=\nabla_X$ when acting on sections of $\mc{L}^{n_0}$. Note that instead of $\nabla$ we could write more precisely $\nabla^{n_0}$ for the connection on $\L^{n_0}$, however we avoid this to keep the notation simple. We get using $X \tilde s^{\pm}=0$ 
\bqn
[X,{\tilde s}^{\mp}\eta_\pm]={\tilde s}^{\mp}[X,\eta_\pm]=-{\tilde s}^{\mp}\eta_\pm.
\eqn
Passing to real and imaginary parts (taking into account that $\Xbf$ is real) gives
\bq
[\Xbf,\nabla_{{\bf U}_j^-}]=- \nabla_{{\bf U}_j^-},\qquad j=1,2.\label{eq:commXnabla}
\eq
The case $n_0=0$ tells us that $[X,{\bf U}_j^-]=-{\bf U}_j^-$, which using \eqref{Hpomega=Hp}  
directly implies that $H^{\omega_\rho}_p \varphi_j^-=-\varphi_j^-$. 
The same argument produces $H^{\omega_\rho}_p \varphi_j^+=\varphi_j^+$.

Next we compute $[\nabla_{{\bf U}^-_1},\nabla_{{\bf U}^-_2}]$. For $f\in \Cinft(\widetilde W)$ one gets using $[\eta_+,\eta_-]=0$
\begin{align*}
{\tilde s}^{-}\eta_+ ({\tilde s}^{+}\eta_-f)-{\tilde s}^{+}\eta_-({\tilde s}^{-}\eta_+f) &=\eta_+\eta_-(f)-\eta_-\eta_+(f)+{\tilde s}^{-}\eta_+ ({\tilde s}^{+})\eta_-(f) -{\tilde s}^{+}\eta_-({\tilde s}^{-})\eta_+(f)\\
&=- \eta_+ ({\tilde s}^{-}){\tilde s}^{+}\eta_-(f) +\eta_-({\tilde s}^{+}){\tilde s}^{-}\eta_+(f),
\end{align*}
where we used that $0=\eta_\pm(1)=\eta_\pm({\tilde s}^{-}{\tilde s}^{+})={\tilde s}^{+}\eta_\pm({\tilde s}^{-}) +{\tilde s}^{-}\eta_\pm({\tilde s}^{+})$. This shows
\bq
[\nabla_{{\bf U}^-_1},\nabla_{{\bf U}^-_2}]=f_2^-\nabla_{{\bf U}^-_1}-f_1^-\nabla_{{\bf U}^-_2}\label{eq:Ucomm1},
\eq
\bq
f_1^-:=\Re(\eta_+ {\bf s}^{-})=\Re(\eta_-{\bf s}^{+}),\qquad f_2^-:=\Im(\eta_+ {\bf s}^{-})=-\Im(\eta_-{\bf s}^{+}). \label{eq:deffpmj}
\eq
We notice that the case $n_0=0$ is included in this discussion with $\nabla_{{\bf U}_j^-}$ being simply ${\bf U}_j^-$ and \eqref{eq:Ucomm1} meaning $[{\bf U}^-_1,{\bf U}^-_2]=f_2^-{\bf U}^-_1-f_1^-{\bf U}^-_2$. We thus obtain
\[ \Omega_{\nabla}({\bf U}^-_1,{\bf U}^-_2)=[\nabla_{{\bf U}^-_1},\nabla_{{\bf U}^-_2}]-\nabla_{[{\bf U}^-_1,{\bf U}^-_2]}=0.\]
Since $\omega_\rho=\omega_0+i\rho\pi^*\Omega_{\nabla}$, using $\varphi_j^-=\xi({\bf U}_j^-)$ and $d\pi H_{\varphi_j^-}^{\omega_\rho}=d\pi H_{\varphi_j^-}^{\omega_0}={\bf U}^-_j$ we obtain 
\[\{\varphi^-_1,\varphi^-_2\}_{\omega_\rho}=\{\varphi^-_1,\varphi^-_2\}_{\omega_0}=\pi^*(f_2^-)\varphi^-_1-\pi^*(f_1^-)\varphi^-_2,\] 
$\pi:T^\ast W\to W$ being the cotangent projection. 
Before we determine the remaining Poisson brackets, let us compute the formal adjoint of $\nabla_{{\bf U}_j^-}$ for $j=1,2$. 
To this end, we first note that thanks to the $G$-invariance of the measure $\mu_G$ on $F{\bf M}$ the formal adjoint of $\eta_\pm:\Cinft(F{\bf M})\to\Cinft(F{\bf M})$ is given by $-\eta_\mp$. We then find for $f,g\in \Cinft(F{\bf M})$ using $ \overline{\tilde s^\pm}=\tilde s^{\mp}$
\begin{align*}
\int_{F{\bf M}}({\tilde s}^{\mp}\eta_\pm f) \overline g \d\mu_G=\int_{F{\bf M}}(\eta_\pm f) \overline {{\tilde s}^{\pm}g}\d\mu_G&=\int_{F{\bf M}}f\cdot  (\overline{-\eta_\mp({\tilde s}^{\pm}g)})\d\mu_G\\
&=\int_{F{\bf M}}f \cdot\big(\overline {-\eta_\mp({\tilde s}^{\pm})g- {\tilde s}^{\pm}\eta_\mp(g)}\big)\d\mu_G,
\end{align*}
which proves $
({\bf s}^{\mp}\eta_\pm)^\ast=-{\bf s}^{\pm}\eta_\mp - \eta_\mp {\bf s}^{\pm}$.
 Passing to real and imaginary parts and taking into account that $(A+iB)^\ast = A^\ast -i B^\ast$ for real operators $A,B$, we get \eqref{eq:nablaUadj}.

Finally, let us compute the remaining Poisson brackets,  assuming that we have repeated all of the above steps with $\eta_\pm$ replaced by $\mu_\pm$ to treat the ``$+$'' case. Then \eqref{eq:etamucomm} gives us
\bqn
[{\tilde s}^{\mp}\eta_{\pm}, {\tilde s}^{\pm}\mu_{\mp}] =\mp iR -X +\mu_{\mp}({\tilde s}^{\pm}){\tilde s}^{\mp}\eta_{\pm}- \eta_{\pm} ({\tilde s}^{\mp}){\tilde s}^{\pm}\mu_{\mp},
\eqn
which is equivalent, for $n_0\in\{-1,0,1\}$, to the relation on $\mc{L}^{n_0}$
\begin{align*}
[\nabla_{{\bf U}_1^-},\nabla_{{\bf U}_1^+}]
+
[\nabla_{{\bf U}_2^-},\nabla_{{\bf U}_2^+}]
&=-\Xbf + 
f_1^+\nabla_{{\bf U}_1^-}+f_2^+ \nabla_{{\bf U}_2^-} -f_1^-\nabla_{{\bf U}_1^+} 
-f_2^-\nabla_{{\bf U}_2^+},\\
[\nabla_{{\bf U}_2^+},\nabla_{{\bf U}_1^-}]+
[\nabla_{{\bf U}_2^-},\nabla_{{\bf U}_1^+}]
&=-in_0+f_1^+\nabla_{{\bf U}_2^-}-f_2^+\nabla_{{\bf U}_1^-}+f_1^- \nabla_{{\bf U}_2^+}-f_2^-\nabla_{{\bf U}_1^+}\;
\end{align*}
where the operators act on $\Cinft(W;\L^{n_0})\cong \Cinft(\widetilde W)\cap \ker(R-in_0)$. 
Yet another analogous calculation using the commutation relations \eqref{eq:etamucomm} yields
\bqn
[{\tilde s}^{\mp}\eta_{\pm}, {\tilde s}^{\mp}\mu_{\pm}] = \eta_{\pm} ({\tilde s}^{\mp}){\tilde s}^{\mp}\mu_{\pm} -\mu_{\pm}({\tilde s}^{\mp}){\tilde s}^{\mp}\eta_{\pm},
\eqn
which is equivalent to
\begin{align*}
[\nabla_{{\bf U}_1^-},\nabla_{{\bf U}_1^+}]
-
[\nabla_{{\bf U}_2^-},\nabla_{{\bf U}_2^+}] 
&=
f_1^-\nabla_{{\bf U}_1^+} 
-f_2^-\nabla_{{\bf U}_2^+}-f_1^+\nabla_{{\bf U}_1^-}
 +f_2^+\nabla_{{\bf U}_2^-},\\
[\nabla_{{\bf U}_2^-},\nabla_{{\bf U}_1^+}] 
+
[\nabla_{{\bf U}_1^-},\nabla_{{\bf U}_2^+}]
&=f_2^-\nabla_{{\bf U}_1^+} +f_1^-\nabla_{{\bf U}_2^+} -f_2^+\nabla_{{\bf U}_1^-}
 -f_1^+\nabla_{{\bf U}_2^-}.
\end{align*}
Combining the equations gives us (with the short hand notation $1_\textrm{opp}:=2$, $2_\textrm{opp}:=1$)
\begin{align*}
[\nabla_{{\bf U}_j^+},\nabla_{{\bf U}_j^-}]
&=\frac{\Xbf}{2} 
+f_{j_\textrm{opp}}^-\nabla_{{\bf U}_{j_\textrm{opp}}^+}
-f_{j_\textrm{opp}}^+\nabla_{{\bf U}_{j_\textrm{opp}}^-},\\
[\nabla_{{\bf U}_1^\pm},\nabla_{{\bf U}_2^\mp}]
&= \frac{in_0}{2}
-f_1^\mp\nabla_{{\bf U}_2^\pm} 
+f_2^\pm\nabla_{{\bf U}_1^\mp}\qquad\qquad \text{on }\Cinft(W;\L^{n_0}).
\end{align*}
This gives 
\[ \Omega_{\nabla}({\bf U}_j^+,{\bf U}_j^-)=0, \quad 
\Omega_{\nabla}({\bf U}_1^\pm,{\bf U}_2^{\mp})=\frac{in_0}{2}\]
and we obtain  the remaining relations in \eqref{eq:commq_j}.
\end{proof}

\begin{lemma}\label{lem:nugt1}
Suppose that Assumption \ref{ass:contradition} is fulfilled with $\nu>-1$. Associate a distribution $\mu_{\omega_{1-|\Lambda|}}\in \D'(T^\ast \M)$ to the semiclassical measure $\mu$ by the expression
\[\cjg \mu_{\omega_{1-|\Lambda|}} , f\omega_{1-|\Lambda|}^{\wedge 5}\cjd:= \int_{T^\ast \M} f \d\mu\qquad  \forall f\in \CT(T^\ast \mc{M}),
\]
where the symplectic form $\omega_{1-|\Lambda|}$ is defined by  \eqref{eq:omega_rho}. Further, let $W\subset \M$, $F_j^-\in \Cinft(W)$, $\varphi_j^-\in \Cinft(T^\ast W)$, $j=1,2$, be as in Lemma \ref{lem:Ubf}. Then one has on $T^\ast W$
\[\forall j=1,2, \quad  H^{\omega_{1-|\Lambda|}}_{\varphi_j^-}\mu_{\omega_{1-|\Lambda|}}= F_j^-\mu_{\omega_{1-|\Lambda|}}.\]  
In particular, $\mu_{\omega_{1-|\Lambda|}}|_{T^\ast W}$ is smooth in the direction of ${\rm span}(H^{\omega_{1-|\Lambda|}}_{\varphi_1^-},H^{\omega_{1-|\Lambda|}}_{\varphi_2^-})$.
\end{lemma}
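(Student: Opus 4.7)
The plan is to reduce the distributional identity $H^{\omega_{1-|\Lambda|}}_{\varphi_j^-}\mu_{\omega_{1-|\Lambda|}}=F_j^-\mu_{\omega_{1-|\Lambda|}}$ to an identity between integrals against $\mu$, tested against arbitrary $a\in\CT(T^\ast W)$. Since $\mc L_{H_g^{\omega}}\omega^{\wedge 5}=0$ for the Hamilton field $H_g^{\omega}$ of any $g$, the pairing of $H_{\varphi_j^-}^{\omega}\mu_\omega$ with $a\omega^{\wedge 5}$ reduces, after an integration by parts, to $-\int (H_{\varphi_j^-}^\omega a)\,d\mu$. Thus the lemma amounts to the identity
\[
\int H_{\varphi_j^-}^{\omega_{1-|\Lambda|}}a\,d\mu=-\int F_j^- a\,d\mu\qquad \forall\,a\in\CT(T^\ast W).
\]
Following the template of the proof of Lemma~\ref{flowinvmu}, I take real $a$, set $A_h:=\Op_h(a)\in\Psi_h^{\rm comp}$, and introduce the semiclassical operator $Q_h:=-ih\nabla_{\mathbf U_j^-}^{n(h)}\in\Psi_h^1$ whose modified principal symbol is $\varphi_j^-$ by Example~\ref{ex:Ahcovderiv}.

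\textbf{First evaluation.} By the commutator--Poisson-bracket correspondence \eqref{eq:commpoisson}, $\frac{i}{h}[Q_h,A_h]\in\Psi_h^0$ has modified principal symbol $\{\varphi_j^-,a\}_{\omega_{hn(h)}}$; combined with \eqref{eq:Poissonconvergence} and $hn(h)\to 1-|\Lambda|$ (Assumption~\ref{ass:contradition}), and the definition of $\mu$, one obtains
\[
\lim_{h\to 0}\cjg \tfrac{i}{h}[Q_h,A_h]u_h,u_h\cjd_{\mc H_h^{Nm}}=\int H_{\varphi_j^-}^{\omega_{1-|\Lambda|}}a\,d\mu.
\]

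\textbf{Second evaluation.} The adjoint relation $Q_h^*=Q_h-ihf_j^-$ coming from \eqref{eq:nablaUadj}, together with $A_h^*=A_h+O(h)\Psi_h^{-1}$ (as $a$ is real), expands $\cjg[Q_h,A_h]u_h,u_h\cjd$ into the sum of $-ih\cjg f_j^-A_hu_h,u_h\cjd$ and a ``cross term'' $-2i\,\Im\cjg A_hQ_hu_h,u_h\cjd$, modulo $O(h)$ errors. Dividing by $-ih$ this yields
\[
\cjg\tfrac{i}{h}[Q_h,A_h]u_h,u_h\cjd=-\cjg f_j^-A_hu_h,u_h\cjd+\tfrac{2}{h}\,\Im\cjg A_hQ_hu_h,u_h\cjd+o(1),
\]
whose first term tends to $-\int F_j^- a\,d\mu$ by definition of $\mu$. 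The main obstacle is to show that the cross term is $o(1)$: naively $\cjg A_hQ_hu_h,u_h\cjd\to \int a\varphi_j^-\,d\mu=0$ because $\supp\mu\subset\Gamma_+(\Lambda)\subset\{\varphi_j^-=0\}$ by Lemma~\ref{supportmu2}, but one needs this to hold at order $o(h)$. The key ingredient is the exact commutation identity
\[
P_h(\lambda+h)Q_hu_h=Q_hP_h(\lambda)u_h,
\]
which follows from $[X,\nabla_{\mathbf U_j^-}]=-\nabla_{\mathbf U_j^-}$ (cf.~\eqref{eq:commXnabla}) and expresses that $Q_hu_h$ is, up to $o(h)$, a near-resonant state at spectral parameter $\lambda+h$. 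Since $\Re(\lambda+h)/h\to\nu+1>0$ by the hypothesis $\nu>-1$, a radial-source/propagation analysis analogous to that used in Lemma~\ref{supportmu2}, applied to $Q_hu_h$, yields the refined bound $h^{-1}\Im\cjg A_hQ_hu_h,u_h\cjd=o(1)$. Combining the two evaluations gives the identity $\int H_{\varphi_j^-}^{\omega}a\,d\mu=-\int F_j^- a\,d\mu$, i.e.\ $H_{\varphi_j^-}^\omega\mu_\omega=F_j^-\mu_\omega$ on $T^\ast W$, and the argument for $\varphi_2^-$ is identical.

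\textbf{Regularity.} The identity $H_{\varphi_j^-}^\omega\mu_\omega=F_j^-\mu_\omega$ is a first-order linear transport equation along the smooth nonvanishing vector field $H_{\varphi_j^-}^\omega$ with smooth coefficient $F_j^-$. Integrating along flow lines expresses $\mu_\omega$ as $\exp\bigl(\int F_j^-\bigr)$ times a distribution pulled back from a local transversal, hence $\mu_\omega$ is smooth in the direction of $H_{\varphi_j^-}^\omega$. Applying this for both $j=1,2$ gives smoothness in the joint span, which is the final claim. The genuinely delicate point throughout is the refined cross-term estimate, where the hypothesis $\nu>-1$ is essential; this is what justifies the extra regularity in the unstable directions that is central to the construction of the anisotropic spaces $\mc H^{Nm,k}(F{\bf M})$ used in Theorem~\ref{prop:gap}.
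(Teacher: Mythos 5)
Your high-level structure is sound and you correctly identify the crucial mechanisms: the commutation relation $[X,\nabla_{\mathbf{U}_j^-}]=-\nabla_{\mathbf{U}_j^-}$ (equivalently $[X,\eta_\pm]=-\eta_\pm$), the Poisson-bracket correspondence \eqref{eq:commpoisson}, the adjoint relation \eqref{eq:nablaUadj}, and the fact that $\nu>-1$ makes $P_h(\lambda)+h$ invertible. The symbol-calculus bookkeeping in your two evaluations is also essentially correct, and reducing the lemma to the cross-term estimate $h^{-1}\Im\cjg A_hQ_hu_h,u_h\cjd=o(1)$ is a valid reformulation.

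However, there is a genuine gap at exactly the step you flag as delicate: the cross-term estimate is asserted, not proved, and the mechanism you suggest (``a radial-source/propagation analysis analogous to that used in Lemma~\ref{supportmu2}'') is not what the paper uses and would not straightforwardly give the needed bound. The paper instead proves the stronger quantitative statement $\norm{h\nabla_{\mathbf{U}_j^-}u_h}_{\mc{H}_h^{Nm'}}=o(h)$ by writing $(P_h(\la)+h)^{-1}=h^{-1}\int_0^\infty e^{-t({\bf X}+\la/h+1)}\,dt$ \emph{explicitly} and then controlling the semigroup $e^{-t{\bf X}}$ on the anisotropic spaces via Egorov's theorem, showing $\norm{e^{-t{\bf X}}}_{\mc{H}_h^{Nm'}\to\mc{H}_h^{Nm'}}\le(1+Ch)^{t+1}$, so the integral converges with norm $\mc{O}(h^{-1})$ precisely because $\Re(\la)+h>\eps h$ when $\nu>-1$. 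This uses that $G'=(m-1/N)\log F$ is still an escape function for the flow, a non-trivial point that your sketch does not touch. Radial-source/sink estimates alone only give microlocal control near $E_s^*$ and $E_u^*$; assembling them into a global $o(1)$ bound on the pairing would essentially force you to rebuild the resolvent estimate, which is cleaner via the semigroup route. You also elide a genuine technicality: $\eta_\pm u_h$ is a section of $\mc L^{n(h)\pm1}$, not $\mc L^{n(h)}$, so one must pass between the $n$- and $(n\pm1)$-calculi and switch back using the trivializing section $\mathbf s^{\mp}$, which is why the paper introduces the shifted order function $m'$. Your argument with $Q_h=-ih\nabla_{\mathbf U_j^-}$ staying on $\mc L^{n(h)}$ hides this, but the norm bound you invoke still has to be proved on the shifted bundles first. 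In short: right skeleton, but the decisive estimate is missing, and the mechanism you gesture at is not the one that works in the paper.
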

\begin{proof} 
By the commutation relation \eqref{eq:etamucomm}, we have 
\[h\eta_\pm f_h=h\eta_\pm(h{\bf X}+\la)u_h=(h{\bf X}+\la +h)h\eta_\pm(h)u_h=(P_h(\la)+h)h\eta_\pm u_h.\]
Let $\mc{H}_h^{Nm'}(\mc{M};\mc{L}^{n(h)\pm 1}):={\rm Op}_{h,n(h)\pm 1}(e^{NG'})^{-1}L^2(\mc{M};\mc{L}^{n(h)\pm 1})$
with 
\[G'(x,\xi):=(m(x,\xi)-\frac{1}{N})\log F(x,\xi), \qquad m'(x,\xi):=m(x,\xi)-\frac{1}{N}\]
and where ${\rm Op}_{h,n(h)\pm 1}$ are the quantization maps defined in \eqref{eq:Oph} for the semiclassical calculi associated with the two new tensor power maps $D\owns h\mapsto n(h)\pm 1$. The operators $\eta_\pm$ allow us to switch from the $n$-calculus to the $(n\pm 1)$-calculi, and tensoring with ${\bf s}^{\mp}$ switches back. 
First, we have that $h\eta_\pm f_h\in \mc{H}_h^{Nm'}(\mc{M};\mc{L}^{n(h)\pm 1})$ 
and, with $L^2_{n}:=L^2(\mc{M};\mc{L}^{n})$,  
\begin{equation}\label{littleoh}
 \begin{split}
&\|{\rm Op}_{h,n(h)\pm 1}(e^{NG'})h\eta_\pm f_h\|_{L^2_{n(h)\pm 1}} \\
& \qquad\qquad\leq \|{\rm Op}_{h,n(h)\pm 1}(e^{NG'})h\eta_\pm {\rm Op}_{h,n(h)}(e^{NG})^{-1}\|_{L^2_{n(h)}\to L^2_{n(h)\pm 1}} \|f_h\|_{\mc{H}^{Nm}_h}\\
 &\qquad\qquad =  o(h^{2}).\end{split}
 \end{equation}
Here we used that on each open set $W\subset \mc{M}$ and  local section ${\bf s}$ of $\mc{L}$ over $W$ and each $\chi\in C_c^\infty(W)$, the operator $\chi{\bf s}^{\mp}{\rm Op}_{h,n(h)\pm 1}(e^{NG'})h\eta_\pm {\rm Op}_{h,n(h)}(e^{NG})^{-1}\in \Psi^{0}_h(\mc{M};\mc{L}^{n(h)})$ belongs to the $n$-calculus and has a uniformly bounded principal symbol with respect to $h$.
 We also note that, from the construction of $m,F$ in \cite[Lemma 1.2]{FS11}, $G'$ is also an escape function satisfying $H^{\omega_0}_pG'(x,\xi)\leq 0$ and $H^{\omega_0}_p(m')(x,\xi)\leq 0$ provided $N>1$ and $|\xi|>r$ for some large enough $r$ independent of $N$. Indeed, our function $F$ corresponds to $\sqrt{1+f^2}$ in the notation of \cite[Lemma 1.2]{FS11} and the statements \cite[(2.4) and (2.7)]{FS11} remain true if Faure-Sj\"ostrand's function $\tilde m$ is replaced by $\tilde m':=\tilde m- 1/N$ since that function differs from  $\tilde m$ only by a constant. The arguments after \cite[(2.9)]{FS11} then still work for $m'$ because $m-1/N>0$ near $E_s$, $m-1/N$ is bounded uniformly in $N>1$, and ${\bf X}(m')={\bf X}(m)$ in the notation of Faure-Sj\"ostrand.
 
Assuming that $\nu>-1$, we claim that $P_h(\la)+h$ is invertible for small $h$ on the spaces $\mc{H}_h^{Nm'}(\mc{M};\mc{L}^{n(h)\pm 1})$ with the estimate 
\begin{equation}\label{normanisotrope}
\|(P_h(\la)+h)^{-1}\|_{\mc{H}_h^{Nm'}(\mc{M};\mc{L}^{n(h)\pm 1})\to \mc{H}_h^{Nm'}(\mc{M};\mc{L}^{n(h)\pm 1})} \leq Ch^{-1}.
\end{equation}
First, we note that $(P_h(\la)+h)^{-1}:L^2\to L^2$ is well defined and given by the convergent expression
\begin{equation}\label{resolvpropagator} 
(P_h(\la)+h)^{-1}=h^{-1}\int_{0}^{\infty}e^{-t({\bf X}+\la/h+1)}dt.
\end{equation} 
To prove \eqref{normanisotrope}, we can first write for $t\in [0,1]$
\begin{multline*}  \|{\rm Op}_{h,n(h)\pm 1}(e^{NG'})e^{-t{\bf X}}{\rm Op}_{h,n(h)\pm 1}(e^{NG'})^{-1}\|_{L^2\to L^2}
\\=\|e^{t{\bf X}}{\rm Op}_{h,n(h)\pm 1}(e^{NG'})e^{-t{\bf X}}{\rm Op}_{h,n(h)\pm 1}(e^{NG'})^{-1}\|_{L^2\to L^2}
\end{multline*} 
and by using Egorov's theorem we see that $Q(t):=e^{t{\bf X}}{\rm Op}_{h,n(h)\pm 1}(e^{NG'})e^{-t{\bf X}}{\rm Op}_{h,n(h)\pm 1}(e^{NG'})^{-1}$ is an operator in the class $Q(t)\in \Psi_h^{m'\circ \Phi_t-m'}(\mc{M};\L^{n(h)\pm 1})$ with principal symbol 
\[ \sigma_{h,n(h)\pm 1}(Q(t))(x,\xi)=\exp(N (G'(\Phi_t(x,\xi))-G'(x,\xi)))\leq 1\]
thus by \eqref{eq:normbound} and since $\|e^{t{\bf X}}\|_{L^2\to L^2}=1$, there is $C$ such that for all $t\in [0,1]$ and $h>0$ small
\[ \|{\rm Op}_{h,n(h)\pm 1}(e^{NG'})e^{-t{\bf X}}{\rm Op}_{h,n(h)\pm 1}(e^{NG'})^{-1}\|_{L^2\to L^2}\leq 1+Ch\]
which means that $\|e^{-t{\bf X}}\|_{\mc{H}_h^{Nm'}(\mc{M};\mc{L}^{n(h)\pm 1})\to \mc{H}_h^{Nm'}(\mc{M};\mc{L}^{n(h)\pm 1})}\leq 1+Ch$ for all $t\in [0,1]$. This directly implies that for all $t\geq 0$
\[ \|e^{-t{\bf X}}\|_{\mc{H}_h^{Nm'}(\mc{M};\mc{L}^{n(h)\pm 1})\to \mc{H}_h^{Nm'}(\mc{M};\mc{L}^{n(h)\pm 1})}\leq (1+Ch)^{t+1}\]
and thus the integral defining $(P_h(\la)+h)^{-1}$  is convergent with $\mc{H}_h^{Nm'}(\mc{M};\mc{L}^{n(h)\pm 1})$-norm 
$\mc{O}(h^{-1})$ if ${\rm Re}(\la)+h>\eps h$ for some $\eps>0$ and all $h$ small enough. 
We thus obtain using \eqref{littleoh} that 
\[ \|h\eta_\pm u_h\|_{\mc{H}_h^{Nm'}(\mc{M};\mc{L}^{n(h)\pm 1})}=o(h).\]
Switching back to the $n$-calculus using \eqref{eq:Ubfdef}, we deduce that in the open set $W$ we have for each $\chi\in C_c^\infty(W)$ 
\begin{equation}\label{heta+u}
\|\chi h\nabla_{{\bf U}_j^-} u_h\|_{\mc{H}_h^{Nm'}(\mc{M};\mc{L}^{n(h)})}=o(h),\quad j=1,2.
\end{equation}
From now on, we will remain in the $n$-calculus and return to the simplified notation ${\rm Op}_{h}={\rm Op}_{h,n(h)}$.  
Recalling the local formula for our quantization map \eqref{eq:Oph} and using the same trivializing section ${\bf s}\in \Cinft(W;\L)$ as in the proof of Lemma \ref{lem:Ubf},   for $a\in C_c^\infty(T^*\mc{M})$ supported inside $T^\ast W$ the operator
${\rm Op}_{h}(a): C^\infty(W;\L^{n(h)})\to  C^\infty(W;\L^{n(h)})$ is given by 
\[{\rm Op}_{h}(a)(f{\bf s}^{n(h)})(x)=\Big((2\pi h)^{-d}\int e^{i\frac{(x-x')\xi}{h}}a(x,\xi- h n(h)\beta(x))f(x')d\xi dx'\Big){\bf s}^{n(h)},\qquad h\in D,\]
where $-i\beta={\bf s}^{-1}\nabla{\bf s}$ is the connection $1$-form in the trivialisation given by ${\bf s}$; the principal symbol of ${\rm Op}_h(a)$ being represented by $a$ according to  \eqref{eq:defprincsymb}. 
Using \eqref{eq:nablaUadj} and applying \eqref{heta+u} with $\chi\equiv 1$ in a neighborhood of the projection of $\supp a$ to $W$, we can now write for $\til{\chi}\in C_c^\infty(W)$ such that 
$\til{\chi}\chi=\chi$
\bq \begin{split} 
o(1)&=\frac{1}{h}\big(\cjg {\rm Op}_{h}(a)\chi\,h \nabla_{{\bf U}_j^-}u_h,u_h\cjd
+\cjg{\rm Op}_{h}(a)\til{\chi}u_h,\chi h\nabla_{{\bf U}_j^-} u_h\cjd\big)\\
&= \Big< \frac{1}{h}\big[{\rm Op}_{h}(a),\chi h \nabla_{{\bf U}_j^-}\big]\til{\chi} u_h,u_h\Big>- \cjg{f_j^-\rm Op}_{h}(a)\til{\chi}u_h,u_h\cjd\label{eq:ohq-1}
\end{split}
\eq
with $f_j^-$ defined in \eqref{eq:deffpmj}.
As $\varphi_j^-$ represents  $\sigma_h(-i h \nabla_{{\bf U}_j^-})$, \eqref{eq:commpoisson} says that the principal symbol of the operator $\frac{1}{h}[{\rm Op}_{h}(a),\chi h \nabla_{{\bf U}_j^-}]\til{\chi}\in \Psi^{\rm comp}_h(W;\L^{n(h)})$ is represented by $\{ a,\varphi^-_j\}_{\omega_{hn(h)}}$. Further, the modified principal symbol of $f_j^-{\rm Op}_{h}(a)\til{\chi}\in \Psi^{\rm comp}_h(W;\L^{n(h)})$ is represented by $F_j^-a$. 
We thus deduce by letting $h\to 0$ in \eqref{eq:ohq-1}  and using \eqref{eq:limitlajneq}, \eqref{eq:defhamilt}, \eqref{eq:Poissonconvergence}:
\bq 0=\int_{T^*W}(H^{\omega_{1-|\Lambda|}}_{\varphi^-_j} + F_j^-)a\, d\mu.\label{eq:Hiq0}
\eq
We note that $H^{\omega_{1-|\Lambda|}}_{\varphi^-_j}$ preserves $\omega_{1-|\Lambda|}$, since it is a Hamiltonian vector field with respect to $\omega_{1-|\Lambda|}$,  thus it also preserves the associated symplectic measure $\omega_{1-|\Lambda|}^{\wedge 5}$. This implies that when we write  $\mu =\mu_{\omega_{1-|\Lambda|}}\omega_{1-|\Lambda|}^{\wedge 5}$, we get from \eqref{eq:Hiq0} the equality
\bqn
 H^{\omega_{1-|\Lambda|}}_{\varphi^-_j} \mu_{\omega_{1-|\Lambda|}}= F_j^-\mu_{\omega_{1-|\Lambda|}},\qquad j=1,2.
\qedhere
\eqn
\end{proof}

\begin{prop}\label{prop:gap2}
If $\L=\L^{1}$ or $\mc{L}=\mc{L}^{-1}$, Assumption \ref{ass:contradition} cannot be fulfilled with  $c_0<1$. 

If $\L=\L^{0}=\mc{M}\times \C$,  then Assumption \ref{ass:contradition} cannot be fulfilled with  $c_0<1$, $\Lambda \neq 0$.
\end{prop}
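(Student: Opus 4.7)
The plan is to argue by contradiction. Suppose Assumption~\ref{ass:contradition} is fulfilled with $c_0 < 1$. Since then $\nu \ge -c_0 > -1$, Lemma~\ref{lem:nugt1} applies and, combined with Lemmas~\ref{supportmu} and~\ref{supportmu2}, the associated semiclassical measure $\mu$ is supported in $\Gamma_+(\Lambda)$, carries positive mass in every open neighborhood of $K_\Lambda$, and its density $\mu_{\omega_\rho}$ against $\omega_\rho^{\wedge 5}$ (with $\rho = 1-|\Lambda|$) solves, on each coordinate chart $T^\ast W$ provided by Lemma~\ref{lem:Ubf}, the transport equations
\begin{equation*}
H^{\omega_\rho}_{\varphi_j^-}\mu_{\omega_\rho} = F_j^-\mu_{\omega_\rho},\qquad j = 1,2.
\end{equation*}
The goal is to use these equations to force $\mu$ to vanish in a neighborhood of $K_\Lambda$, contradicting the positive-mass statement.

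The key algebraic step is to compute the $2\times 2$ matrix $M$ whose entries
\[
M_{kj} := H^{\omega_\rho}_{\varphi_j^-}(\varphi_k^+)\big|_{K_\Lambda} = \{\varphi_j^-,\varphi_k^+\}_{\omega_\rho}\big|_{K_\Lambda}
\]
measure the transversality of $H^{\omega_\rho}_{\varphi_j^-}$ to $\Gamma_+(\Lambda) = \{\varphi_1^+ = \varphi_2^+ = 0,\, p_0 = \Lambda\}$ in the $\varphi^+$-directions. Using the Poisson brackets \eqref{eq:commq_j} and the facts $p_0 = \Lambda$, $\varphi_j^\pm = 0$ on $K_\Lambda$, a short computation yields
\begin{equation*}
M = \tfrac{1}{2}\begin{pmatrix}-\Lambda & n_0\rho\\ -n_0\rho & -\Lambda\end{pmatrix},\qquad \det M = \tfrac{1}{4}\bigl(\Lambda^2 + n_0^2\rho^2\bigr).
\end{equation*}
Under the hypotheses of the proposition this determinant is strictly positive: for $\mc{L} = \mc{L}^{\pm 1}$ one has $n_0 = \pm 1$ and $\rho = 1-|\Lambda|$, and $\Lambda^2 + (1-|\Lambda|)^2 > 0$ on $[-1,1]$ since its vanishing would force $\Lambda = 0$ and $|\Lambda| = 1$ simultaneously; for $\mc{L} = \mc{L}^0$, the extra assumption $\Lambda\neq 0$ gives $\det M = \Lambda^2/4\neq 0$. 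Hence $M$ is invertible at every point of $K_\Lambda$.

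Fix $q \in K_\Lambda$ inside some chart $T^\ast W$. The invertibility of $M$ at $q$ lets us choose real constants $c_1, c_2$ such that $V := c_1 H^{\omega_\rho}_{\varphi_1^-} + c_2 H^{\omega_\rho}_{\varphi_2^-}$ satisfies $(V\varphi_1^+)(q) \neq 0$, hence $V\varphi_1^+ \neq 0$ on some open neighborhood $U \subset T^\ast W$ of $q$. The transport equations give $V\mu_{\omega_\rho} = F\mu_{\omega_\rho}$ on $U$ with $F := c_1 F_1^- + c_2 F_2^-$ smooth; equivalently $\mc{L}_V\mu = F\mu$, since $V$ is the Hamiltonian vector field of $c_1\varphi_1^- + c_2\varphi_2^-$ for $\omega_\rho$ and therefore preserves $\omega_\rho^{\wedge 5}$. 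Now $\supp\mu \subset \Gamma_+(\Lambda) \subset \{\varphi_1^+ = 0\}$, so $\mu|_U$ is carried by the smooth hypersurface $S := \{\varphi_1^+ = 0\}\cap U$ to which $V$ is transverse. A standard distributional order-reduction argument—writing $\mu$ locally as $w\,\delta(\varphi_1^+)$ for a Radon measure $w$ on $S$ and matching the $\delta'$-type singular parts in the distributional identity $\mc{L}_V\mu = F\mu$—forces $(V\varphi_1^+)\,w = 0$, hence $w = 0$ and $\mu|_U = 0$. Covering $K_\Lambda$ by such neighborhoods yields $\mu = 0$ on an open set containing $K_\Lambda$, contradicting Lemma~\ref{supportmu2}. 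The main step to make rigorous is this last distributional order-reduction in the presence of a vector field $V$ with both transverse and tangential components; everything else follows directly from the preceding semiclassical measure analysis and the bracket identities of Lemma~\ref{lem:Ubf}.
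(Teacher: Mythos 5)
Your argument contains a genuine gap rooted in a mix-up between $\Gamma_+$ and $\Gamma_-$. You assert that $\Gamma_+(\Lambda) = \{\varphi_1^+ = \varphi_2^+ = 0,\, p_0 = \Lambda\}$ and hence $\supp\mu \subset \{\varphi_1^+ = 0\}$. But according to \eqref{eq:zeroset} and the remark following Lemma~\ref{lem:Ubf}, the vector fields ${\bf U}_j^-$ span $E_u$, and $\Gamma_+ = E_0^*\oplus E_u^*$ is exactly the annihilator of $E_u$, so $\Gamma_+$ is cut out by $\varphi_1^- = \varphi_2^- = 0$; the support of $\mu$ lies in $\{\varphi_1^- = \varphi_2^- = 0,\, p_0 = \Lambda\}$, not in $\{\varphi_1^+ = 0\}$. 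Your matrix $M_{kj} = \{\varphi_j^-,\varphi_k^+\}|_{K_\Lambda}$ is computed correctly, but it measures transversality of $H^{\omega_\rho}_{\varphi_j^-}$ to $\Gamma_- = \{\varphi^+=0\}$, equivalently to $K$ \emph{within} $\Gamma_+$, not to $\supp\mu$. In fact the fields $H^{\omega_\rho}_{\varphi_j^-}$ are \emph{tangent} to $\Gamma_+$: this is the first alternative in \eqref{eq:dphivanish}, and the relevant brackets $\{\varphi_j^-,\varphi_k^-\}_{\omega_\rho}$ vanish on $\Gamma_+$ by the second line of \eqref{eq:commq_j}. Consequently every linear combination $V = c_1 H^{\omega_\rho}_{\varphi_1^-} + c_2 H^{\omega_\rho}_{\varphi_2^-}$ is tangent to the support of $\mu$, and the transport equation $\mc{L}_V\mu = F\mu$ with $V$ tangent to $\supp\mu$ is an honest ODE along the flow of $V$ on $\Gamma_+$; it constrains the density of $\mu$ along $V$ but does not force $\mu = 0$. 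Your order-reduction step is vacuous because the $\delta'$-type term it hinges on never appears.

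The paper's proof does not attempt (and cannot, under these hypotheses) to show $\mu = 0$ near $K_\Lambda$. It uses the same nondegenerate bracket matrix, but correctly interpreted: the $H^{\omega_\rho}_{\varphi_j^-}$ being transverse to $K$ inside $\Gamma_+$ lets one build, via the inverse function theorem, a tubular chart $\psi(s_1,s_2,\kappa)$ on $\Gamma_+$ around $K_\Lambda$. Lemma~\ref{lem:nugt1} is then read as smoothness of $\psi^*\mu$ in $(s_1,s_2)$, which gives the regularity estimate $\psi^*\mu(B_\delta) \le C\delta^2$. Combined with the backward Hamilton flow (which, via $H^{\omega_\rho}_{p}\varphi_j^+ = \varphi_j^+$ from \eqref{eq:commq_j}, shrinks $U_\delta\cap\Gamma_+$ into a $Ce^{-t}\delta$-neighborhood of $K_\Lambda$), the invariance law $(\Phi_t)_*\mu = e^{2\nu t}\mu$ of Lemma~\ref{flowinvmu}, and the positive-mass statement of Lemma~\ref{supportmu2}, this yields $e^{2\nu t}\le C e^{-2t}$ for large $t$, hence $\nu\le -1$, which contradicts $\nu\ge -c_0 > -1$. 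The missing idea in your approach is that the regularity of $\mu$ transverse to $K$ \emph{within} $\Gamma_+$ has to be played off against the exponential contraction of the backward flow; the conclusion is a bound $\nu\le -1$, not vanishing of $\mu$.
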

\begin{proof} 
In a local neighborhood $W$ as in Lemma \ref{lem:Ubf}, consider the functions  
$\varphi_j^\pm (x,\xi)=\xi({\bf U}^\pm_j(x))$, $j=1,2$. 
Because the ${\bf U}^+_j$ span $E_s|_W$ and the ${\bf U}^-_j$ span $E_u|_W$, the differentials $d\varphi_1^+,d\varphi_2^+,d\varphi_1^-,d\varphi_2^-$ are fiber-wise linearly independent. As the symplectic form $\omega_\rho$ is non-degenerate for each $\rho\in \R$, we conclude that the Hamiltonian vector fields $H^{\omega_\rho}_{\varphi_1^+},H^{\omega_\rho}_{\varphi_2^+}, H^{\omega_\rho}_{\varphi_1^-},H^{\omega_\rho}_{\varphi_2^-}$ are fiber-wise linearly independent for each $\rho\in \R$.  Next, we want to check on $K=\Gamma_+\cap \Gamma_-\subset T^\ast \M$,  introduced in \eqref{eq:defKGamma}, for which points $\kappa\in K\cap T^\ast W$ and which $\rho\in \R$ the vectors $H^{\omega_\rho}_{\varphi_j^\pm}(\kappa)\in T_\kappa (T^\ast W)$ are transverse to $T_\kappa K\subset T_\kappa(T^\ast W)$. To this end, we note that the functions $\varphi_j^\pm$ satisfy 
\bq 
\Gamma_\pm\cap T^\ast W=\{(x,\xi)\in T^\ast W \,|\, \varphi_1^\mp(x,\xi)=\varphi_2^\mp(x,\xi)=0\}.\label{eq:zeroset}
\eq
Also, for every $\rho\in \R$  we have
\begin{align*}
-d\varphi_1^\pm (H^{{\omega_\rho}}_{\varphi_2^\pm})&=d\varphi_2^\pm(H^{{\omega_\rho}}_{\varphi_1^\pm})=\{\varphi_1^\pm,\varphi_2^\pm\}_{{\omega_\rho}}=F_2^\pm\varphi_1^\pm-F_1^\pm\varphi_2^\pm,\\
-d\varphi_1^\pm(H^{\omega_\rho}_{\varphi_2^\mp})&=d\varphi_2^\mp(H^{\omega_\rho}_{\varphi_1^\pm})= \{\varphi_1^\pm,\varphi_2^\mp\}_{\omega_\rho} =  -\frac{n_0\rho}{2} + F^\pm_2\varphi_1^\mp - F^\mp_1\varphi_2^\pm,\\
-d\varphi_j^+(H^{\omega_\rho}_{\varphi_j^-})&=d\varphi_j^-(H^{\omega_\rho}_{\varphi_j^+})=\{\varphi_j^+,\varphi_j^-\}_{\omega_\rho} =  \frac{p_0}{2} + F^-_{j'}\varphi_{j'}^+ - F^+_{j'}\varphi_{j'}^-
\end{align*}
by \eqref{eq:defhamilt} and \eqref{eq:commq_j} with $1':=2$, $2':=1$, which implies 
\begin{align}\begin{split}
0&=d\varphi_1^\pm (H^{{\omega_\rho}}_{\varphi_2^\pm})|_{\Gamma_\mp\cap T^\ast W}=d\varphi_2^\pm(H^{{\omega_\rho}}_{\varphi_1^\pm})|_{\Gamma_\mp\cap T^\ast W},\\
-\frac{n_0\rho}{2}  &=-d\varphi_1^\pm(H^{\omega_\rho}_{\varphi_2^\mp})|_{K\cap T^\ast W}=d\varphi_2^\mp(H^{\omega_\rho}_{\varphi_1^\pm})|_{K\cap T^\ast W},\\
\frac{p_0}{2}|_{K\cap T^\ast W}&=-d\varphi_j^+(H^{\omega_\rho}_{\varphi_j^-})|_{K\cap T^\ast W}=d\varphi_j^-(H^{\omega_\rho}_{\varphi_j^+})|_{K\cap T^\ast W},\qquad j=1,2,
\label{eq:0Poisson}\end{split}
\end{align}
so that we get for $j=1,2$ and every $\rho\in \R$ 
\bq\begin{split}
H^{{\omega_\rho}}_{\varphi_j^\pm}(x)&\in T_x\Gamma_\mp \qquad\forall \;x\in \Gamma_\mp \cap T^\ast W,\\ \label{eq:dphivanish}
H^{{\omega_\rho}}_{\varphi_j^\pm}(\kappa)&\not\in T_\kappa \Gamma_\pm\qquad \begin{cases}\forall\; \kappa \in K,\quad &\text{if } n_0\rho\neq 0,\\
\forall\; \kappa \in K\setminus K_0, & \text{if } n_0\rho= 0.\end{cases}
\end{split}
\eq
Consequently, we have transversality of $H^{\omega_{1-|\Lambda|}}_{\varphi_1^-},H^{\omega_{1-|\Lambda|}}_{\varphi_2^-}$ to $TK$ on all of $K_\Lambda\cap T^\ast W$ provided that $n_0(1-|\Lambda|)\neq 0$ or $\Lambda \neq 0$. If $n_0\neq 0$, then one of these relations is always fulfilled, while for $n_0=0$ we get the condition $\Lambda\neq 0$. Assuming either $n_0\neq 0$ or $\Lambda \neq 0$,  we can apply the inverse function theorem to deduce that there is $s_0>0$ such that the map 
\[ \psi:  (-s_0,s_0)^2\x (K\cap T^\ast W)\owns  (s_1,s_2,\kappa)\mapsto e^{s_1H^{\omega_{1-|\Lambda|}}_{\varphi_1^-}+s_2H^{\omega_{1-|\Lambda|}}_{\varphi_2^-}}(\kappa)
\]
is a diffeomorphism from a neighborhood $(-\eps,\eps)^2\times \mc{O}_\Lambda$ of $\{0\}\times (K_\Lambda\cap T^\ast W)$ onto a neighborhood of $K_\Lambda\cap T^\ast W$ in $\Gamma_+\cap T^\ast W$, where $\mc{O}_\Lambda$ is a neighborhood of $K_\Lambda\cap T^\ast W$ in $K\cap T^\ast W$. Note that the image of $\psi$ is indeed contained in $\Gamma_+$ due to the first relation in \eqref{eq:dphivanish}. The relation \eqref{eq:0Poisson} also implies that $H^{\omega_{1-|\Lambda|}}_{\varphi_1^-}$ and $H^{\omega_{1-|\Lambda|}}_{\varphi_2^-}$ commute on $\Gamma_+\cap T^\ast W$.

Now, suppose that Assumption \ref{ass:contradition} is fulfilled.  Then, by Lemma \ref{supportmu2}, the support of $\mu$ is contained in $\Gamma_+(\Lambda)$, which means that the pullback measure $\psi^*\mu$ is well-defined. Let us check that with the assumption $\nu>-1$ the measure $\psi^*\mu$ satisfies 
\begin{equation}\label{mulipshitz}
\psi^*\mu(B_\delta)\leq C\delta^2
\end{equation}
if $B_{\delta}:=\{|s|\leq \delta\, |\, \kappa\in \mc{O}_\Lambda\}$ for small $\delta>0$. In the variables $(s_1,s_2,\kappa)$,
we have $\pl_{s_j}\psi^*\mu=F_j^-\psi^*\mu$ in the distributional sense by Lemma \ref{lem:nugt1} if $\nu>-1$. If $\chi_\delta(s_1,s_2,\kappa):=\chi(s_1/\delta,s_2/\delta,\kappa)$ is a function supported in $B_{2\delta}$ and equal to $1$ in $B_\delta$, we can use the Fourier transform in the $s$ variable to get  for every $l\in \N_0$
\[ \cjg \psi^*\mu,\chi_\delta\cjd = \delta^2\int \frac{\hat{\chi}(\delta\xi_1,\delta\xi_2,\kappa)}{(1+|\xi|^2)^l} (1+|\xi|^2)^l\widehat{\psi^*\mu}(\xi_1,\xi_2,\kappa) \d\kappa \d\xi_1\d\xi_2 \]
and $(1+|\xi|^2)^l\widehat{\psi^*\mu}=\widehat{f_l\psi^*\mu}$ with  the smooth function $f_l:=(1+(F_1^-)^2+(F_2^-)^2)^l$, so we deduce that 
$\cjg \psi^*\mu,\chi_\delta\cjd =\mc{O}(\delta^2)$.

Now, for $\delta>0$ small, we consider $U_\delta:=\{\zeta \in T^*\mc{M}\, |\, d_{T^*\mc{M}}(\zeta, K_\Lambda)\leq \delta\}$ where $d_{T^*\M}$ is the Sasaki Riemannian distance on $T^*\mc{M}$.
Using Lemma \ref{flowinvmu} we find that  
\begin{equation}\label{bound_propagation_1}
\mu(e^{-tH^{\omega_{1-|\Lambda|}}_p}(U_\delta))= e^{2t\nu}\mu(U_\delta)\qquad \forall\;t\geq0.
\end{equation}
Consider a covering $(W_\ell)_{\ell=0}^{\ell_0}$ of $U_\delta$ by finitely many charts, on which we obtain functions $\varphi_{\ell,j}^{\mp}$ for $j=1,2$ and diffeomorphisms $\psi_\ell$ as above.
Due to the relations $H^{\omega_{1-|\Lambda|}}_p\varphi_{\ell,j}^+ = \varphi_{\ell,j}^+$ that hold 
thanks to \eqref{eq:commq_j}, and in view of \eqref{eq:zeroset}, there is $C$ independent of $\delta>0$ such that for all $t\geq 0$:
\[ e^{-tH^{\omega_{1-|\Lambda|}}_p}(U_\delta\cap \Gamma_+)\subset U_\delta \cap \Gamma_+ \cap U_{C\delta e^{-t}},\]
thus, provided $t$ is large enough,  
\[ \mu(e^{-tH^{\omega_{1-|\Lambda|}}_p}(U_\delta))\leq \mu(U_{C\delta e^{-t}}\cap \Gamma_+).\]
Since one can find $C'>0$ such that 
\[U_{C\delta e^{-t}}\cap \Gamma_+\subset \bigcup_{\ell=0}^{\ell_0} \psi_\ell(B_{C'\delta e^{-t}}),\]
we can use \eqref{mulipshitz} to deduce that there is $C>0$, independent of $\delta$, 
such that for all $t\geq 0$ large enough
\[\mu(e^{-tH^{\omega_{1-|\Lambda|}}_p}(U_\delta))\leq  C\delta^2 e^{-2t}.\]
Combining with \eqref{bound_propagation_1} and recalling from Lemma \ref{supportmu2} that $\mu(U_\delta)>0$, we conclude that there is $C>0$ such that for all $t\geq 0$ large
\[   e^{2t\nu} \leq C e^{-2t}.\]
We conclude that if $n_0\neq 0$ or $\Lambda\neq 0$,  Assumption \ref{ass:contradition} can only be fulfilled if $\nu\leq-1$. Since $\nu\in [-c_0,c_1]$, this is possible only if $c_0\geq 1$.
\end{proof}
Finally, we can prove Proposition \ref{lem:key}:
\begin{proof}[Proof of Proposition \ref{lem:key}]\label{proof:keylem}
Let $c_0\in (0,1)$, $N> c_0$, $c_1>1$, and $n_0\in \{-1,0,1\}$. If $n_0\neq 0$, suppose that \eqref{eq:keybound} does not hold, so that \eqref{eq:contradiction} does not hold, and if $n_0=0$, suppose that \eqref{eq:keybound0} does not hold, so that one arrives at \eqref{eq:cond2} with $n_0=0$. Then Assumption \ref{ass:contradition} is fulfilled for the line bundle $\L=\L^{n_0}$ with the chosen $c_0,c_1,N$. Moreover, if $n_0=0$, the condition $|\Im \lambda|\geq 1$ implies that $\Lambda\neq 0$ in Assumption \ref{ass:contradition}. Applying Proposition \ref{prop:gap2}, we arrive at a contradiction.
\end{proof}

\subsection{Application to exponential mixing}\label{sec:mix}
We conclude by a discussion on the exponential mixing using the resolvent estimate. The argument is quite standard in scattering theory/resonance theory. 
First $e^{-tX}:L^2(F{\bf M})\to L^2(F{\bf M})$ is unitary and its generator $iX$ is self-adjoint on $L^2$. By Stone's formula \cite[Theorem VII.13]{Reed-Simon1}, we can write the spectral measure of $iX$ in terms of the resolvent: for $f\in L^2(F{\bf M})$ with $\cjg f,1\cjd=0$
\[\begin{split}
e^{-tX}f=e^{it(iX)}f=&\frac{1}{2\pi i}\lim_{\eps\to 0}
\int_{-\infty}^\infty e^{it\la}((iX-\la-i\eps)^{-1}-(iX-\la+i\eps)^{-1})fd\la\\
=&-\frac{1}{2\pi}
\int_{-\infty}^\infty e^{it\la} (R(i\la)- R^+(-i\la))f d\la
\end{split}\]
where $R^+(\la)$ is defined as in \eqref{resolventff} but with the flow in forward time: for ${\rm Re}(\la)>0$ 
\[ R^+(\la)f:=\int_0^\infty e^{-\la t}\varphi_t^*f dt.\]
The results of Theorem \ref{prop:gap} apply as well to the resolvent $R^+(\la)$ with the anisotropic spaces $\mc{H}^{Nm,k}$ replaced by $\mc{H}^{-Nm,k}$ for $k=0,1$, so $R^+(\la)$ extends analytically to $\{{\rm Re}(\la)>-1\}$ except at a finite number of poles. The operator $dE_X(\la)=(R(i\la)- R^+(-i\la))$ for $\la\in \R$ is the spectral measure of $X$ in the spectral theorem for $iX$; $dE_X(\la)$ 
is only well defined on $\mc{H}^{Nm,1}\cap\mc{H}^{-Nm,1}$ but its integral over each bounded interval $ [a,b]\owns \lambda$ produces a bounded operator on $L^2$ (equal to the spectral projector of $iX$ on $[a,b]$), and 
\[ -XdE_X(\la)=-dE_X(\la)X=i\la dE_X(\la).\]
We take $f\in \mc{H}^{Nm,1}\cap \mc{H}^{-Nm,1}$ such that $X^jf\in \mc{H}^{Nm,1}\cap \mc{H}^{-Nm,1}$ for $j\leq 5$. Then one can write 
\[ (i\la+1)^{-5}dE_X(\la)f= dE_X(\la)(-X+1)^5f\]
and therefore
\begin{equation}\label{e-tXint} 
e^{-tX}f= -\frac{1}{2\pi}\int_{-\infty}^\infty e^{it\la} (R(i\la)(-X+1)^5f- R^+(-i\la)(-X+1)^5f)\frac{d\la}{(i\la+1)^5}.
\end{equation}
By Theorem \ref{prop:gap} we have for ${\rm Re}(s)\in (-1+\delta,1)$ and $|{\rm Im}(s)|>1$
\begin{equation}\label{boundRsf} 
\|R(s)f\|_{\mc{H}^{Nm}}\leq C\cjg s\cjd^{3}\|f\|_{\mc{H}^{Nm,1}}, \quad 
\|R^+(s)f\|_{\mc{H}^{-Nm}}\leq C\cjg s\cjd^{3}\|f\|_{\mc{H}^{-Nm,1}}.
\end{equation} 
Note that we can pair $f\in \mc{H}^{Nm}$ with $f'\in \mc{H}^{-Nm}$:
\[ \cjg f,f'\cjd = \sum_{n\in \Z} \cjg A_{h(n)}^{N}f_n,A_{h(n)}^{-N}f_n'\cjd_{L^2(\mc{M};\mc{L}^n)}. \] 
Thus if $f'\in \mc{H}^{Nm}\cap \mc{H}^{-Nm}$ and $f\in \mc{H}^{Nm,1}$ 
we can compute $\cjg e^{-tX}f,f'\cjd$ by performing a contour deformation from $\la\in \R$ to $\la\in i\beta+\R$ for $\beta\in (0,1)$ in the integral \eqref{e-tXint}
\[ \begin{split}
\cjg e^{-tX}f,f'\cjd=& -\frac{e^{-t\beta}}{2\pi}\int_{-\infty}^\infty e^{it\la} \cjg R(-\beta+i\la)(-X+1)^5f,f'\cjd \frac{d\la}{(-\beta+i\la+1)^5}\\
& + \frac{e^{-t\beta}}{2\pi}\int_{-\infty}^\infty e^{it\la} \cjg R^+(\beta-i\la)(-X+1)^5f,f'\cjd \frac{d\la}{(-\beta+i\la+1)^5}\\
& + \sum_{j=1}^J e^{t\la_j}\cjg\Pi_jf_0,f_0'\cjd\\
& =\sum_{j=1}^J e^{t\la_j}\cjg\Pi_jf_0,f_0'\cjd+\mc{O}(e^{-t\beta})\|(-X+1)^5f\|_{\mc{H}^{Nm,1}}\|f'\|_{\mc{H}^{-Nm}}
\end{split}\]
where the $\la_j\in (-1,0)$ are the finitely many Pollicott-Ruelle resonances 
described in Theorem \ref{prop:gap} and the $\Pi_j$ are the corresponding spectral projectors (they come from $R_0(s)$ and are related to eigenvalues of the Laplacian on functions); here $f_0,f_0'$  are 
the averages of $f,f'$ in the $\SO(2)$ fibers of $F{\bf M}\to S{\bf M}$.
We also used \eqref{boundRsf} to prove convergence in the integral when we performed the contour deformation and the fact that $R(s)$ is meromorphic with finitely many poles at $s=\la_j$ by Theorem \ref{prop:gap}) (the terms $e^{t\la_j}\cjg\Pi_jf,f'\cjd$ appear as residues), and finally that 
$R^+(s)$ is analytic in ${\rm Re}(s)>0$. 

\appendix

\section{Band structure of line bundle resonances}\label{sec:appendix}
\smallskip
\begin{center}\textsc{Charles Hadfield}\end{center}
\medskip
Here we describe the resonances $\sigma^{\mathrm{PR}}_n$ of 
the operator ${\bf X}=\nabla^n_X$ on sections of the line bundles $\L^n$, using an approach similar to \cite{dfg}, based on horocyclic operators. As in Section \ref{sec:resolv} we use the notation \eqref{eq:notation}, in particular  $\M=S{\bf M}$. 

\subsection{Band structure and interaction between ladder and horocyclic operators}

The relation \eqref{eq:Uminuscoord} shows that (under the appropriate identifications provided by the injections \eqref{eq:injections} and taking into account the suppressed indices $m,n$) one has 
\bq
\S\U_-=\eta_++\eta_-.\label{eq:SUetaeta}
\eq
As a consequence of \eqref{eq:SUetaeta} and the commutation relation $[\eta_+,\eta_-]=0$, we then obtain the decomposition (when acting on $\L^n$ using the injection \eqref{eq:injections})
\begin{align}\label{decompositionSU}
(\S\U_-)^m = \sum_{a=0}^m \binom{m}{a} \eta_+^a \eta_-^{m-a}.
\end{align}
Using the horocyclic operators we define a notion of band structure for distributional sections of the line bundles $\L^n$:
\begin{definition}
For $\lambda \in \C$ and $n\in \Z$, say that $f \in \D'(\M;\L^n) \cap \ker({\bf X}+\lambda)$ is a Pollicott-Ruelle resonant state \emph{in the $m$-th band} if  $m\in\N_0$ is the smallest integer such that $(\S\Um)^mf\in\ker(\S\Um)$. 
\end{definition}  
We immediately get an equivalent characterisation due to \eqref{decompositionSU}, taking into account that $\eta_+$ and  $\eta_-$ map into different embedded line bundles, so their sum applied to a section vanishes iff the individual summands vanish:
\begin{lemma}\label{band_iff_etapm}
A Pollicott-Ruelle resonant state $f \in \D'(\M;\L^n) \cap \ker({\bf X}+\lambda)$ is in the $m$-th band iff $m$ is the smallest integer such that $\eta_+^a\eta_-^b f=0$ whenever $a+b=m+1$.
\end{lemma}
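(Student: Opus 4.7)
The plan is to reduce the equivalence to the binomial expansion \eqref{decompositionSU} and exploit the fact that the individual summands $\eta_+^{a}\eta_-^{m+1-a}f$ live in distinct line bundles. By definition, $f$ lies in the $m$-th band iff $m$ is the smallest non-negative integer such that $(\S\U_-)^{m+1}f = 0$, so the task reduces to characterising this condition in terms of the $\eta_\pm$-products.

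First I would apply \eqref{decompositionSU} with $m$ replaced by $m+1$ to write
\begin{align*}
(\S\U_-)^{m+1}f \;=\; \sum_{a=0}^{m+1}\binom{m+1}{a}\,\eta_+^{a}\eta_-^{m+1-a}f.
\end{align*}
Since $\eta_\pm$ shifts the $\SO(2)$-weight by $\pm 1$ (see the commutation relations \eqref{eq:etamucomm} and the discussion immediately after), the summand $\eta_+^{a}\eta_-^{m+1-a}f$ is a distributional section of $\L^{n+2a-m-1}$. As $a$ ranges over $\{0,1,\ldots,m+1\}$, the weights $n+2a-m-1$ take $m+2$ pairwise distinct values, all of which occur as $\SO(2)$-isotypic components in the decomposition \eqref{eq:decomp1290890} of $\otimes_S^{|n|+m+1}\E^*$.

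The main point would then be to observe that, via the canonical injections \eqref{eq:injections}, these summands embed into pairwise distinct irreducible direct summands of $\otimes_S^{|n|+m+1}\E^*$. Since the binomial coefficients $\binom{m+1}{a}$ are strictly positive, the vanishing of the total sum is equivalent to the vanishing of each individual summand $\eta_+^{a}\eta_-^{m+1-a}f$ in its respective line bundle. Conversely, if every such term vanishes then the sum vanishes trivially. Consequently, for each fixed $m \in \N_0$, the condition $(\S\U_-)^{m+1}f = 0$ is equivalent to the condition $\eta_+^{a}\eta_-^{b}f = 0$ for all $a+b = m+1$, and taking the smallest such $m$ in either description yields the lemma.

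The only genuine ingredient is the linear independence of the embedded summands, which is an immediate consequence of \eqref{eq:decomp1290890} combined with the $\SO(2)$-weight shift property of $\eta_\pm$. Everything else is a formal consequence of the binomial expansion, so I do not anticipate any real obstacle.
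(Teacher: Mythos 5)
Your proof is correct and takes essentially the same approach as the paper: the paper's (one-sentence) proof is precisely that the binomial expansion \eqref{decompositionSU} of $(\S\U_-)^{m+1}f$ has summands $\eta_+^a\eta_-^{m+1-a}f$ landing in the pairwise distinct embedded line bundles $\L^{n+2a-m-1}$ inside $\otimes_S^{|n|+m+1}\E^*$, so the sum vanishes iff each term does. Your writeup just spells this out in a bit more detail.
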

\begin{definition}We define the space
\[
\mathrm{Res}^m_{n}(\lambda):=\{u \in \mathrm{Res}_{n}(\lambda)\,|\, u\textup{ is in the }m\textup{-th band}\}
\]
of $m$-th band resonant states on the bundle $\L^n$, and we call $\lambda$ an \emph{$m$-th band resonance on $\L^n$} if $\mathrm{Res}^m_{n}(\lambda)\neq \{0\}$. We write
$\sigma^m_{n}$ for the set of $m$-th band resonances on $\L^n$.
\end{definition}

Next, we shall identify the $m$-th band of resonances on $\mc{L}^n$ with the $0$-th band of resonances on $\mc{L}^{n+a-b}$ where $a,b\in\N_0$ are such that $a+b=m$. 

Suppose $u \in \D'(\M;\L^n) \cap \ker({\bf X}+\lambda)$ is in the $m$-th band. Using the decomposition of $(\S\U_-)^m$ we obtain
\[
(\S\U_-)^m u = \sum_{a=0}^m \binom{m}{a} \eta_+^a \eta_-^{m-a} u,
\]
and we have for every combination $a+b=m$  using Lemma \ref{band_iff_etapm} and \eqref{eq:etamucomm}
\[
\eta_+^a\eta_-^b u \in \D'(\M;\L^{n+a-b})\cap\ker({\bf X}+\lambda+m) \cap \ker (\S\U_-).
\]
Taking into account that for $f \in \D'(\M;\L^n)  \cap \ker(\S\U_-)\cap \ker({\bf X}+z)$ for some $z\in \C$, the wavefront set condition $\mathrm{WF}(f)\subset E^\ast_u$ is automatically fulfilled by microlocal ellipticity (see \cite[Lemma 2.5]{kuester-weich18}), the operator $\eta_+^a\eta_-^b$ induces for each $\lambda \in \C$,  $n\in \Z$, $a,b\in \N_0$ a map
\bq
J_{n,\lambda}^{a,b}:\mathrm{Res}^{a+b}_{n}(\lambda)\to \mathrm{Res}^0_{n+a-b}(\lambda+a+b).\label{eq:Jnab}
\eq

We finish this section with some preparations that will later allow us to appeal to the Poisson transform bijectivity results in \cite{dfg}. Indeed, in order  to apply them, we would like for a distributional section $f$ in the $m$-th band to have that $m$ is minimal such that $\S\Um^m f\in\ker(\Um)$. This is true as a simple consequence of the following basic observation:

\begin{lemma}\label{xyab0}
Let $f\in \mc{D}'(G)$ and suppose, for all $a,b\in\N_0$ with $a+b=m+1$, that $\eta_+^a\eta_-^bf=0$. Then $(U_1^-)^a(U_2^-)^bf=0$ for all $a,b$ with $a+b=m+1$.
\end{lemma}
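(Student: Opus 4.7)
The plan is to exploit the very simple algebraic relationship between the operators $\eta_\pm$ and $U_j^-$, together with the fact that $\eta_+$ and $\eta_-$ commute, to expand $(U_1^-)^a(U_2^-)^b$ as a polynomial in $\eta_+$ and $\eta_-$ of the same total degree.

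First I would recall the definition \eqref{eq:etamu}: $\eta_\pm = \tfrac{1}{2}(U_1^- \pm iU_2^-)$, which inverts to
\[
U_1^- = \eta_+ + \eta_-, \qquad U_2^- = -i(\eta_+ - \eta_-).
\]
Next I would verify that $[\eta_+,\eta_-]=0$ on $\mc{D}'(G)$. This follows from $[U_1^-,U_2^-]=0$, which is read off from the commutation relations \eqref{commutation_relations_XUR} (the relation $[U_1^\pm,U_2^\pm]=0$ is explicitly stated there). Hence $\eta_+$ and $\eta_-$ commute as left-invariant differential operators on $G$.

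The main step is then to expand, for any $a,b\in \N_0$ with $a+b=m+1$,
\[
(U_1^-)^a(U_2^-)^b = (-i)^b (\eta_+ + \eta_-)^a (\eta_+-\eta_-)^b.
\]
Since $\eta_+$ and $\eta_-$ commute, the binomial theorem applies and yields a linear combination of monomials of the form $\eta_+^p \eta_-^q$ with $p+q = a+b = m+1$. By hypothesis every such monomial annihilates $f$, whence $(U_1^-)^a(U_2^-)^b f=0$.

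There is no real obstacle here: the only point to double-check is the commutativity of $\eta_+$ and $\eta_-$, without which the binomial expansion would introduce commutator terms of lower total degree that are not covered by the hypothesis. Once that is in place, the proof is a one-line algebraic manipulation.
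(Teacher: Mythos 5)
Your proof is correct and is essentially identical to the one given in the paper: both expand $(U_1^-)^a(U_2^-)^b = (-i)^b(\eta_++\eta_-)^a(\eta_+-\eta_-)^b$ using the binomial theorem and the commutativity $[\eta_+,\eta_-]=0$ to reduce to the hypothesis. The only difference is that you spell out why $\eta_+$ and $\eta_-$ commute (via $[U_1^-,U_2^-]=0$), which the paper leaves implicit.
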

\begin{proof}
It suffices to write for $a+b=m+1$
\[ (U_1^-)^a(U_2^-)^bf= (-i)^b(\eta_++\eta_-)^a(\eta_+-\eta_-)^bf=\sum_{j+k=m+1}C_{jk}\eta_+^j\eta_-^kf=0\]
for some constants $C_{jk}\in \C$.
\end{proof}

\begin{lemma}\label{band_iff_SU}
An $f \in \D'(\M;\L^n) \cap \ker({\bf X}+\lambda)$ is in the $m$-th band iff $m$ is the smallest integer such that $(\S\Um)^m f\in\ker(\Um)$.
\end{lemma}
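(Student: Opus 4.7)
The plan is to compare the two minimality conditions using the elementary inclusion $\ker(\Um) \subset \ker(\S\Um)$, the decomposition \eqref{decompositionSU}, and the explicit coordinate formula \eqref{eq:XU_-} for $\Um$, combined with the equivalent characterisation of $m$-th band resonant states in Lemma \ref{band_iff_etapm}.

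The easy direction is almost tautological. Since $\S\Um = \S\circ\Um$, any section annihilated by $\Um$ is also annihilated by $\S\Um$. Consequently, if $(\S\Um)^m f \in \ker(\Um)$, then $(\S\Um)^{m+1} f = \S\Um\big((\S\Um)^m f\big) = 0$, so any $m$ satisfying the $\ker(\Um)$-condition also satisfies the $\ker(\S\Um)$-condition; hence the smallest integer $m$ satisfying the former is no smaller than the $m$-th band index.

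For the reverse inequality, assume $f$ is in the $m$-th band. By Lemma \ref{band_iff_etapm}, all products $\eta_+^p\eta_-^q f$ with $p+q = m+1$ vanish. I would then expand via \eqref{decompositionSU},
\[
(\S\Um)^m f = \sum_{a=0}^m \binom{m}{a}\, \eta_+^{a}\eta_-^{m-a} f,
\]
and compute $\Um$ applied to this expression using the coordinate formula \eqref{eq:XU_-}, which places a factor $v_s^*$ in the first (unsymmetrised) tensor slot and differentiates the coefficients by $\eta_s$, for $s \in \{+,-\}$. Using $[\eta_+,\eta_-]=0$ one obtains
\[
\Um\big((\S\Um)^m f\big) = \sum_{s\in\{+,-\}} v_s^* \otimes \sum_{a=0}^m \binom{m}{a}\, \eta_s\eta_+^{a}\eta_-^{m-a} f,
\]
and every summand of the inner sum equals $\eta_+^{p}\eta_-^{q} f$ with $p+q = m+1$, hence vanishes by hypothesis. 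Therefore $\Um((\S\Um)^m f) = 0$, and combined with the easy direction this $m$ is the smallest such integer.

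The only real obstacle is a bookkeeping one: one must keep track of how the summands $\eta_+^{a}\eta_-^{m-a} f$ in the decomposition \eqref{decompositionSU} sit inside the various embedded line subbundles of $\otimes^{|n|+m}_S \E^*$, so that the coordinate formula \eqref{eq:XU_-} for $\Um$ can be applied term by term. Once this identification is in place, the vanishing calculation above finishes the proof; the conceptual work has already been done by Lemma \ref{band_iff_etapm}.
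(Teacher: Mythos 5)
Your argument is correct and follows the same route as the paper's proof: the easy inclusion $\ker(\Um)\subset\ker(\S\Um)$ for the forward direction, then Lemma~\ref{band_iff_etapm}, the expansion \eqref{decompositionSU}, and the coordinate formula \eqref{eq:XU_-} for the converse. The only noticeable divergence is a detour the paper makes that you skip: the paper first converts the hypothesis $\eta_+^a\eta_-^b f=0$ (for $a+b=m+1$) into $(U_1^-)^a(U_2^-)^b f=0$ via Lemma~\ref{xyab0}, and then phrases the coefficient computation entirely in terms of the real vector fields $U_1^-,U_2^-$ before expanding $\eta_s$ back out at the very end. You stay with $\eta_\pm$ from start to finish; since $\eta_\pm=\tfrac12(U_1^-\pm iU_2^-)$ are Lie algebra elements acting directly on the scalar coefficients on $G$ and commute with each other, your shortcut is valid and renders Lemma~\ref{xyab0} unnecessary --- a mild streamlining of the paper's write-up.

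One small caveat on notation: the display you wrote for $\Um\big((\S\Um)^m f\big)$ should not be read as a literal direct-sum decomposition into embedded line bundles, because for fixed $s$ the ladder image $\eta_s\eta_+^a\eta_-^{m-a}f$ has $R$-weight of the wrong parity to re-embed into $\otimes_S^{|n|+m}\E^\ast$ via \eqref{eq:injections}. What is actually true --- and what you evidently intend, since you flag it yourself as the lone bookkeeping point --- is that every scalar coefficient of $\Um\big((\S\Um)^m f\big)$ in the tensor basis $v_s^\ast\otimes v_{k_1}^\ast\otimes\cdots\otimes v_{k_{|n|+m}}^\ast$ is a multiple of some $\eta_+^p\eta_-^q\lambda$ with $p+q=m+1$, where $\lambda$ is the scalar function on $G$ representing $f$. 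The paper makes this explicit by carrying the coefficients $\lambda_K$ through the coordinate formula; you instead appeal to \eqref{decompositionSU} and \eqref{eq:XU_-} abstractly. Since all such terms vanish by Lemma~\ref{band_iff_etapm}, the argument closes correctly either way.
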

\begin{proof}
The only if direction is immediate. If $f$ is in the $m$-th band then by Lemma~\ref{band_iff_etapm} $\eta_+^a\eta_-^bf=0$ whenever $a+b=m+1$. We lift $f$ to the cover $G/M=S\H^3$ (and still denote it by $f$) and make our analysis on $G/M$. 
As a consequence of Lemma~\ref{xyab0} we conclude $(U_1^-)^a(U_2^-)^bf=0$ whenever $a+b=m+1$. Now, by \eqref{eq:Uminuscoord} one has 
\bqn
(\S\U_-)^mf(g M) = [g, \sum_{s_1,\ldots,s_m\in \{+,-\}} \sum_{K\in\sA^N} (\eta_{s_m}\cdots\eta_{s_1} \lambda_K)\, \mc{S}(v_{s_1}^\ast\otimes\cdots \otimes v_{s_m}^\ast\otimes v_{k_1}^\ast\otimes\cdots \otimes v_{k_N}^\ast) ]
\eqn
if \[
f(gM) = [ g, \sum_{K\in\sA^N} \lambda_K\, v_{k_1}^\ast\otimes\cdots \otimes v_{k_N}^\ast ], \quad \lambda_K\in \Cinft(G),\quad g\in G,
\]
where $\sA^N=\{(k_1,\ldots,k_N);\; k_j\in \{+,-\}\}$ and $\{v_+^\ast, v_-^\ast\}$ is the dual basis to the basis $\{v_+,v_-\}$ introduced in Section \ref{sec:caninj}. Inserting  the definition $\eta_{\pm}=\frac{1}{2}(U_1^-\pm iU_2^-)$, we obtain
\begin{multline*}
(\S\U_-)^mf(g M) = [g, \sum_{l=0}^m\sum_{s_1,\ldots,s_m\in \{+,-\}} \sum_{K\in\sA^N} c_{s_1,\ldots,s_m,K,l}((U^-_1)^l(U^-_2)^{m-l} \lambda_K)\\
\mc{S}(v_{s_1}^\ast\otimes\cdots \otimes v_{s_m}^\ast\otimes v_{k_1}^\ast\otimes\cdots \otimes v_{k_N}^\ast) ]
\end{multline*}
for some constants $c_{s_1,\ldots,s_m,K,l}\in \C$. By \eqref{eq:XU_-} we then get
\begin{multline*}
\U_-(\S\U_-)^mf(g M) = [g, \sum_{l=0}^m\sum_{s,s_1,\ldots,s_m,\in \{+,-\}} \sum_{K\in\sA^N} \frac{1}{2}c_{s_1,\ldots,s_m,K,l}((U_1^-s\, iU_2^-)(U^-_1)^l(U^-_2)^{m-l} \lambda_K)\\
v_s\otimes\mc{S}(v_{s_1}^\ast\otimes\cdots \otimes v_{s_m}^\ast\otimes v_{k_1}^\ast\otimes\cdots \otimes v_{k_N}^\ast) ].
\end{multline*}
On the other hand, we have for all $a,b$ such that $a+b=m+1$
\[
0=(U_1^-)^a(U_2^-)^bf(g M)=[ g, \sum_{K\in\sA^N} ((U_1^-)^a(U_2^-)^b\lambda_K)\, v_{k_1}^\ast\otimes\cdots \otimes v_{k_N}^\ast ],\qquad \forall\; g\in G,
\]
which implies $(U_1^-)^a(U_2^-)^b\lambda_K=0$. Since $U^-_1$ and $U^-_2$ commute, we can  conclude that $(U_1^-\pm iU_2^-)(U^-_1)^l(U^-_2)^{m-l} \lambda_K=0$ for all $l\in \{0,\ldots,m\}$ and hence $\U_-(\S\U_-)^mf=0$.
\end{proof}

\subsubsection{Horocyclic inversion in the ladder picture}

In what follows we aim to invert the horocyclic operators in the language of the ladder operators $\eta_\pm$ on the line bundles $\L^n$. More precisely,  in view of Lemma \ref{band_iff_etapm}, suppose $a+b=m$ and $f\in \D'(S\H^3;\L^{n+a-b})$ is a distributional section in the kernels of $\eta_+,\eta_-$, and ${\bf X}+\lambda+m$. The following calculations provide $f'\in\D'(S\H^3;\L^n)$ such that $\eta_+^a\eta_-^b f'=f$ and $({\bf X}+\lambda)f'=0$. Moreover, we will see that $\mathrm{WF}(f')\subset \mathrm{WF}(f)$. 
\begin{lemma}\label{lem:decomp1}
For each $m\in\nn^*$ the Lie algebra elements $\eta_\pm, \mu_\pm, Q_\pm$ from \eqref{eq:etamu} fulfill
\[
\eta_+^m\mu_-^m = \eta_+^{m-1}\mu_-^m\eta_+ + \eta_+^{m-1}\mu_-^{m-1}(mQ_+- m(m-1)),
\]
\[
\eta_-^m\mu_+^m = \eta_-^{m-1}\mu_+^m\eta_- + \eta_-^{m-1}\mu_+^{m-1}(mQ_-- m(m-1)).
\]
\end{lemma}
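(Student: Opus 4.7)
The strategy is a direct computation rather than an induction: move the single leftmost $\eta_+$ past the string $\mu_-^m$ one factor at a time, then premultiply by $\eta_+^{m-1}$.

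First I would establish the auxiliary commutator $[Q_+,\mu_-]=-2\mu_-$. This comes for free from $Q_+=-(X+iR)$ together with $[X,\mu_-]=\mu_-$ and $[R,\mu_-]=-i\mu_-$ (both from \eqref{eq:etamucomm}), and by an immediate induction it yields $Q_+\mu_-^j=\mu_-^j Q_+-2j\mu_-^j$ for all $j\in\N_0$. Combined with the fundamental relation $[\eta_+,\mu_-]=Q_+$ (also from \eqref{eq:etamucomm}), this is all the algebra I need.

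Next, using $[\eta_+,\mu_-]=Q_+$ repeatedly,
\[
\eta_+\mu_-^m=\mu_-^m\eta_++\sum_{k=0}^{m-1}\mu_-^{k}\,Q_+\,\mu_-^{m-1-k}.
\]
Inserting the formula $Q_+\mu_-^{m-1-k}=\mu_-^{m-1-k}Q_+-2(m-1-k)\mu_-^{m-1-k}$ turns each summand into $\mu_-^{m-1}Q_+-2(m-1-k)\mu_-^{m-1}$, and summing $k$ from $0$ to $m-1$ uses the elementary identity $\sum_{k=0}^{m-1}(m-1-k)=\tfrac{m(m-1)}{2}$ to give
\[
\eta_+\mu_-^m=\mu_-^m\eta_++m\mu_-^{m-1}Q_+-m(m-1)\mu_-^{m-1}=\mu_-^m\eta_++\mu_-^{m-1}\bigl(mQ_+-m(m-1)\bigr).
\]
Premultiplying by $\eta_+^{m-1}$ yields the first claimed identity. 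The second identity follows by the symmetry that exchanges $\eta_+\leftrightarrow\eta_-$, $\mu_-\leftrightarrow\mu_+$, $Q_+\leftrightarrow Q_-$; the structural commutation relations invoked are preserved under this exchange (in particular $[\eta_-,\mu_+]=Q_-$ and $[Q_-,\mu_+]=-2\mu_+$, derived from $Q_-=-(X-iR)$ in the same way).

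There is essentially no obstacle here; the only point requiring a bit of care is to make sure that in the intermediate step one does \emph{not} further commute $Q_+$ past $\mu_-^{m-1}$, since the statement of the lemma places the factor $mQ_+-m(m-1)$ on the right of $\mu_-^{m-1}$. The bookkeeping that $\sum_{k=0}^{m-1}(m-1-k)=\binom{m}{2}$ produces exactly the coefficient $m(m-1)$ needed to match the claim.
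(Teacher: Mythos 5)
Your proof is correct and follows essentially the same approach as the paper: both pull the innermost $\eta_+$ past $\mu_-^m$ one factor at a time using $[\eta_+,\mu_-]=Q_+$, then slide the resulting $Q_+$'s to the right using $Q_+\mu_-^j=\mu_-^j(Q_+-2j)$, and sum the arithmetic progression $\sum(m-1-k)=\binom{m}{2}$ to get the coefficient $m(m-1)$. The only cosmetic difference is that you strip off the harmless prefactor $\eta_+^{m-1}$ before commuting, while the paper keeps it in place throughout.
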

\begin{proof}
First we note that $Q_+\mu_-^k=\mu_-^k(Q_+-2k)$. Interchanging the innermost $\eta_+,\mu_-$ gives with the commutation relations \eqref{eq:etamucomm}
\begin{align*}
\eta_+^m\mu_-^m 	& = \eta_+^{m-1}(\mu_-\eta_++Q_+)\mu_-^{m-1} \\
				& = \eta_+^{m-1}\mu_-\eta_+\mu_-^{m-1} + \eta_+^{m-1}\mu_-^{m-1}(Q_+-2(m-1)).
\end{align*}
Continuing to shift the operator $\eta_+$ to the right ultimately gives
\[
\eta_+^m\mu_-^m	= 	\eta_+^{m-1}\mu_-^m\eta_+ + \sum_{k=1}^m \eta_+^{m-1}\mu_-^{m-1}(Q_+-2(m-k)),
\]
which provides the result as $\sum_{k=1}^m 2(m-k)=m(m-1)$. The second equation is handled similarly.
\end{proof}
Let us now regard $\eta_\pm, \mu_\pm$, and $Q_\pm$ as differential operators mapping (distributional) sections of $\L^n$ to (distributional) sections of $\L^{n\pm1}$ and $\L^{n}$, respectively. An immediate consequence of applying Lemma \ref{lem:decomp1} recursively is then
\begin{lemma}\label{lem:commutations_eta_mu_give_Q}
For each $m\in \N_0$ and $k\in \{0,1\ldots,m\}$ there are differential operators $\mathcal{B}^{m,k}_\pm$ and polynomials $P_{k,m}$ such that with $\mA_\pm:=\sum_{k=0}^m\mathcal{B}^{m,k}_\pm \eta_\pm P_{k,m}(Q_\pm)$ one has
\begin{align*}
\eta_+^m\mu_-^m &= \mA_+ + \prod_{k=1}^m (kQ_+- k(k-1)),
&
\eta_-^m\mu_+^m &= \mA_- + \prod_{k=1}^m (k Q_-- k(k-1)).
\end{align*}
\end{lemma}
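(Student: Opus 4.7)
The plan is to prove this by induction on $m\in\N_0$, with Lemma \ref{lem:decomp1} providing the fundamental recursion. The base case $m=0$ is immediate: $\eta_+^0\mu_-^0=I$ matches the empty product $\prod_{k=1}^0(kQ_+-k(k-1))=1$ with $\mA_+=0$, and similarly on the ``$-$'' side.

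For the inductive step I would treat the ``$+$'' identity in detail, the ``$-$'' case being symmetric. Apply Lemma \ref{lem:decomp1} to get
\[
\eta_+^m\mu_-^m=\eta_+^{m-1}\mu_-^m\,\eta_+ + \eta_+^{m-1}\mu_-^{m-1}\bigl(mQ_+-m(m-1)\bigr).
\]
The first summand is already of the required shape $\mathcal{B}\,\eta_+\,P(Q_+)$ with $\mathcal{B}=\eta_+^{m-1}\mu_-^m$ and $P\equiv 1$, so it can be absorbed into $\mA_+$. For the second summand I would invoke the induction hypothesis in the form
\[
\eta_+^{m-1}\mu_-^{m-1}=\mA_+^{(m-1)}+\prod_{k=1}^{m-1}\bigl(kQ_+-k(k-1)\bigr),
\]
where $\mA_+^{(m-1)}=\sum_{k=0}^{m-1}\mathcal{B}^{m-1,k}_+\,\eta_+\,P_{k,m-1}(Q_+)$. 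Right-multiplying by $(mQ_+-m(m-1))$, the product term becomes exactly $\prod_{k=1}^{m}(kQ_+-k(k-1))$, producing the leading contribution to the desired decomposition at level $m$. The remaining piece $\mA_+^{(m-1)}(mQ_+-m(m-1))$ still has the correct form, since each of its summands is of shape $\mathcal{B}\,\eta_+\,P(Q_+)\cdot(mQ_+-m(m-1))=\mathcal{B}\,\eta_+\,\tilde{P}(Q_+)$ with $\tilde{P}(Q_+):=P(Q_+)(mQ_+-m(m-1))$ again a polynomial in $Q_+$ alone.

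I do not expect any real obstacle: the nontrivial algebraic identity (the step of commuting $\eta_+$ past a power of $\mu_-$ while accounting for the $Q_+$-terms) has already been carried out in Lemma \ref{lem:decomp1}. The only point needing a word of justification is the closure of the shape $\mathcal{B}\,\eta_+\,P(Q_+)$ under right-multiplication by polynomials in $Q_+$, which is immediate. One could, if desired, keep track of the $\mathcal{B}^{m,k}_+$ and $P_{k,m}$ explicitly, but this is not required by the statement and is irrelevant to the later use of the lemma, where all that matters is the factorization of each summand in $\mA_+$ through an $\eta_+$. The ``$-$'' identity is proved by the same induction, using the second formula of Lemma \ref{lem:decomp1}.
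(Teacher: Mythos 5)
Your induction is exactly the intended argument: the paper offers no separate proof of Lemma \ref{lem:commutations_eta_mu_give_Q} beyond stating that it is ``an immediate consequence of applying Lemma \ref{lem:decomp1} recursively,'' and your write-up supplies precisely that recursion, correctly identifying that the shape $\mathcal{B}\,\eta_\pm\,P(Q_\pm)$ is stable under right multiplication by polynomials in $Q_\pm$, which is what lets the $\mA_\pm$-part absorb the lower-order terms at each step.
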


\begin{lemma}\label{inversion_vu}
Let $v\in\D'(S\H^3;\L^{n+a-b})\cap\ker({\bf X}+\lambda+a+b)\cap\ker(\eta_++\eta_-)$ with $a,b\in \N_0$. Define two constants $q_\pm$ by $q_\pm = \lambda+a+b\pm(n+a-b)$ and a further two constants
\[
p_+= \prod_{k=1}^a (kq_+- k(k-1)), \qquad p_-= \prod_{k=1}^b (kq_-- k(k-1)).
\]
(If $a=0$ set $p_+=1$. If $b=0$ set $p_-=1$.) If $p_+p_-\neq0$ define $u\in\D'(S\H^3;\L^n)$ by $u := \frac{1}{p_+p_-}\mu_-^a\mu_+^b v.$ 
Then $\eta_+^a\eta_-^b u=v$ and $({\bf X} + \lambda) u =0$. Moreover, one has $\mathrm{WF}(u)\subset\mathrm{WF}(v)$.
\end{lemma}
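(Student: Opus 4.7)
The plan is to reduce everything to the action of $\mu_\pm$, $\eta_\pm$ and $Q_\pm$ on $v$, which by hypothesis satisfies $(\Xbf + \lambda + a + b)v = 0$ and $(\eta_+ + \eta_-)v = 0$. First, I would observe that the kernel hypothesis is stronger than it appears: since $\eta_+ v \in \D'(S\H^3;\L^{n+a-b+1})$ and $\eta_- v \in \D'(S\H^3;\L^{n+a-b-1})$ live in distinct line bundles, the vanishing of their sum forces $\eta_+ v = 0$ and $\eta_- v = 0$ separately. Recalling that $R$ acts on $\D'(S\H^3;\L^k)$ by multiplication by $ik$, and using $\Xbf v = -(\lambda + a + b)v$, a direct computation gives $Q_\pm v = -(X \pm iR)v = q_\pm v$, matching the constants $q_\pm$ in the statement.

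For the eigenvalue equation $(\Xbf + \lambda)u = 0$, I would use the commutation relations $[\Xbf, \mu_\pm] = \mu_\pm$ from \eqref{eq:etamucomm}, together with $[\mu_+,\mu_-]=0$ (which follows from $\mu_\pm = \tfrac{1}{2}(U_1^+ \pm iU_2^+)$ and $[U_1^+, U_2^+] = 0$ in \eqref{commutation_relations_XUR}). These iterate to $\Xbf \mu_-^a \mu_+^b = \mu_-^a \mu_+^b(\Xbf + a + b)$, so applying this to $v$ yields $\Xbf u = -\lambda u$.

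The core step is proving $\eta_+^a \eta_-^b u = v$. Since $[\eta_+, \mu_+] = [\eta_-, \mu_-] = 0$ from \eqref{eq:etamucomm}, I can reorder $\eta_+^a \eta_-^b \mu_-^a \mu_+^b = \eta_+^a \mu_-^a \, \eta_-^b \mu_+^b$. The main auxiliary claim is then: if $w$ satisfies $\eta_+ w = 0$ and $Q_+ w = cw$ for some $c \in \C$, then
\[
\eta_+^m \mu_-^m w \;=\; \prod_{k=1}^m \bigl(kc - k(k-1)\bigr)\, w,
\]
and symmetrically for $\eta_-^m \mu_+^m$ with $Q_-$. This follows by induction on $m$ from Lemma \ref{lem:decomp1}: in the decomposition $\eta_+^m \mu_-^m = \eta_+^{m-1}\mu_-^m \eta_+ + \eta_+^{m-1}\mu_-^{m-1}(mQ_+ - m(m-1))$, the first summand annihilates $w$ because $\eta_+ w = 0$, while the second evaluates to $(mc - m(m-1))\,\eta_+^{m-1}\mu_-^{m-1}w$, matching the induction hypothesis. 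Applying the claim first with $c = q_-$ (to get $\eta_-^b \mu_+^b v = p_- v$) and then with $c = q_+$ (to get $\eta_+^a \mu_-^a v = p_+ v$) yields $\eta_+^a \eta_-^b u = v$.

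Finally, the wavefront set inclusion $\mathrm{WF}(u) \subset \mathrm{WF}(v)$ is immediate because $\mu_-^a \mu_+^b$ is a differential operator, and differential operators never enlarge the wavefront set. The main obstacle is really only the bookkeeping in the inductive identity from Lemma \ref{lem:decomp1}; once the $Q_\pm$-eigenvalues on $v$ are correctly identified as $q_\pm$, the rest is a sequence of direct consequences of the commutation relations \eqref{eq:etamucomm}.
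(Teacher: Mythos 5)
Your proof is correct and follows essentially the same route as the paper's: identify the $Q_\pm$-eigenvalues $q_\pm$ of $v$, use $[\eta_\pm,\mu_\pm]=0$ to reorder $\eta_+^a\eta_-^b\mu_-^a\mu_+^b$ into $\eta_+^a\mu_-^a\,\eta_-^b\mu_+^b$, and then exploit $\eta_\pm v=0$ to kill the ``remainder'' terms. The only cosmetic difference is that you rerun the induction from Lemma~\ref{lem:decomp1} inline rather than invoking the packaged Lemma~\ref{lem:commutations_eta_mu_give_Q} and its operators $\mA_\pm$, but the content is identical.
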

\begin{proof}
Introduce the notation $\mathcal{Q}_\pm$ for the products found in Lemma~\ref{lem:commutations_eta_mu_give_Q} (with the products terminating at $a,b$ respectively). Then (since $[\eta_\pm,\mu_\pm]=0$)
\[
\eta_+^a\eta_-^b \mu_-^a\mu_+^b = (\mA_+ + \mathcal{Q}_+)(\mA_- + \mathcal{Q}_-).
\]
Note that $v\in\ker(Q_\pm-q_\pm)$ and also $v\in\ker(\mA_\pm)$. The first result follows:
\[
\eta_+^a\eta_-^b u = (p_+p_-)^{-1}(\mA_+ + \mathcal{Q}_+)(\mA_- + \mathcal{Q}_-)v = (p_+p_-)^{-1}\mathcal{Q}_+\mathcal{Q}_-v= v. 
\]
The relation $({\bf X} + \lambda) u =0$ is a consequence of the commutation relations $[X,\mu_\pm]=\mu_\pm$. Finally, the statement about the wavefront sets follows from the observation that $u$ is obtained from $v$ by applying a differential operator and differential operators do not enlarge wavefront sets.
\end{proof}

The inversion results above allow us to learn more about the band structure of resonances:
\begin{cor}\label{cor:isoJ2}Let $a,b\in \N_0$. The map 
\bqn
J_{n,\lambda}^{a,b}:\mathrm{Res}^{a+b}_{n}(\lambda)\to \mathrm{Res}^0_{n+a-b}(\lambda+a+b)
\eqn
defined in $\eqref{eq:Jnab}$ is surjective if $\lambda\not\in \A_{a,b}:=(-n+[-2a,-a-1]\cap \Z)\cup (n+[-2b,-b-1]\cap \Z)$. 
Moreover, if for some  $m\in \N_0$ one has  $\lambda\not\in\A_{m}:= \bigcup_{a+b=m}\A_{a,b}$, the map
\[
J^m_{n,\lambda}:=\sum_{a+b=m}J_{n,\lambda}^{a,b}:\mathrm{Res}^{m}_{n}(\lambda)\to \bigoplus_{a+b=m} \mathrm{Res}^0_{n+a-b}(\lambda+m)  
\]
is an isomorphism.
\end{cor}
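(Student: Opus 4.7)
My plan is to reduce surjectivity of each individual map $J^{a,b}_{n,\lambda}$ to Lemma~\ref{inversion_vu}, then assemble the resulting inverses for all pairs with $a+b=m$ into a two-sided inverse of $J^m_{n,\lambda}$. For surjectivity of $J^{a,b}_{n,\lambda}$, fix $v\in\mathrm{Res}^0_{n+a-b}(\lambda+a+b)$. As a zeroth-band state, $v$ satisfies $\S\U_-v=0$; by \eqref{eq:SUetaeta} and the disjointness of the target bundles of $\eta_+$ and $\eta_-$, this is equivalent to $\eta_+v=\eta_-v=0$. Combined with $Rv=i(n+a-b)v$ and ${\bf X}v=-(\lambda+a+b)v$, this gives $Q_\pm v=q_\pm v$ for the constants in Lemma~\ref{inversion_vu}, and the factorisations $p_+=\prod_{k=1}^a k(q_+-(k-1))$, $p_-=\prod_{k=1}^b k(q_--(k-1))$ make clear that $p_+p_-\neq 0$ precisely when $\lambda\notin\A_{a,b}$. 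Lemma~\ref{inversion_vu} then produces $u:=(p_+p_-)^{-1}\mu_-^a\mu_+^b v$ with $\eta_+^a\eta_-^b u=v$, $({\bf X}+\lambda)u=0$ and $\WF(u)\subset \WF(v)\subset E_u^*$, so $u\in\mathrm{Res}_n(\lambda)$.

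The step requiring work is to verify that $u$ lies in the $(a+b)$-th band. The identity $\eta_+^a\eta_-^b u=v\neq 0$ rules out any strictly lower band by Lemma~\ref{band_iff_etapm}, so it suffices to show $\eta_+^j\eta_-^k u=0$ whenever $j+k=a+b+1$. The commutation identities $[\eta_+,\eta_-]=[\eta_\pm,\mu_\pm]=[\mu_+,\mu_-]=0$ from \eqref{eq:etamucomm} permit the rearrangement $\eta_+^j\eta_-^k\mu_-^a\mu_+^b=\eta_+^j\mu_-^a\eta_-^k\mu_+^b$. Using $[\eta_-,\mu_+]=Q_-$, $[Q_-,\mu_+]=-2\mu_+$ together with $\eta_-v=0$ and $Q_-v=q_-v$, a short induction yields $\eta_-^k\mu_+^b v=0$ as soon as $k>b$; by the symmetric statement $\eta_+^j\mu_-^a v=0$ as soon as $j>a$. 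Since $j+k=a+b+1$, at least one of these strict inequalities holds for every admissible $(j,k)$, so the product vanishes in every case.

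For the isomorphism statement, injectivity of $J^m_{n,\lambda}$ follows at once from Lemma~\ref{band_iff_etapm}: a vanishing image means $\eta_+^a\eta_-^b f=0$ for every $a+b=m$, which places $f$ in a band strictly below $m$, incompatible with $f\in\mathrm{Res}^m_n(\lambda)$. For surjectivity, the hypothesis $\lambda\notin\A_m$ gives $\lambda\notin\A_{a,b}$ for every $(a,b)$ with $a+b=m$, so to a target element $(v^{a,b})_{a+b=m}$ the construction above associates preimages $f^{a,b}=(p_+^{(a,b)}p_-^{(a,b)})^{-1}\mu_-^a\mu_+^b v^{a,b}$. Setting $f:=\sum_{a+b=m}f^{a,b}$, the only remaining point is that the cross-terms $\eta_+^{a'}\eta_-^{b'}f^{a,b}$ with $(a',b')\neq (a,b)$ and $a'+b'=m$ vanish; this uses the same dichotomy, since $(a',b')\neq (a,b)$ with identical total forces either $b'>b$ (making $\eta_-^{b'}\mu_+^b v^{a,b}=0$) or $a'>a$ (making $\eta_+^{a'}\mu_-^a v^{a,b}=0$). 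Hence $J^m_{n,\lambda}(f)=(v^{a,b})_{a+b=m}$. The main obstacle lies in the combinatorial bookkeeping underlying both vanishing statements, where one must carefully track how the ladder operators $\eta_\pm$ propagate through chains of $\mu_\mp$ via Lemma~\ref{lem:commutations_eta_mu_give_Q} while exploiting that $v$ is annihilated by both $\eta_+$ and $\eta_-$.
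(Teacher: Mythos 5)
Your proof is correct and follows essentially the same route as the paper's: apply Lemma~\ref{inversion_vu} to build a preimage $u=(p_+p_-)^{-1}\mu_-^a\mu_+^b v$ for each $J^{a,b}_{n,\lambda}$, observe that the nonvanishing condition $p_+p_-\neq 0$ on the constants $q_\pm=\lambda+a+b\pm(n+a-b)$ translates to $\lambda\notin\A_{a,b}$, and use the definition of band to get injectivity of $J^m_{n,\lambda}$.

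Where you go beyond the paper's write-up is in the two places where the paper is actually terse: (i) the verification that $u$ really sits in the $(a+b)$-th band (the paper simply asserts $u\in\mathrm{Res}^{a+b}_n(\lambda)$), and (ii) the passage from surjectivity of each $J^{a,b}_{n,\lambda}$ to surjectivity of $J^m_{n,\lambda}$, which the paper presents as an immediate implication but which is not formal --- one needs exactly your ``cross-vanishing'' observation that the constructed preimage $f^{a,b}=(p_+p_-)^{-1}\mu_-^a\mu_+^b v^{a,b}$ is annihilated by $\eta_+^{a'}\eta_-^{b'}$ for $(a',b')\neq(a,b)$ with $a'+b'\geq a+b$. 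Your mechanism for this (the two commuting $\mathfrak{sl}_2$-type triples $\{\eta_+,\mu_-,Q_+\}$ and $\{\eta_-,\mu_+,Q_-\}$, with $v$ a lowest-weight vector of each, so $\eta_+^j\mu_-^a v=0$ for $j>a$ and $\eta_-^k\mu_+^b v=0$ for $k>b$) is the right one and cleanly handles both (i) and (ii). So the proposal is not just an equivalent proof but a slightly more complete one, filling in steps the published argument leaves implicit.
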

\begin{proof}
To get surjectivity of $J_{n,\lambda}^{a,b}$, we lift the resonant state $v\in \mathrm{Res}^0_{n+a-b}(\lambda+a+b)$ to $G/M$ and apply Lemma  \ref{inversion_vu}: the distribution $u$ such that $\eta_+^a\eta_-^bu=v$ descends to $\M=\Gamma\backslash G/M$ and is in $\mathrm{Res}^{a+b}_{n}(\lambda)$ thanks to the wavefront condition. It then suffices to observe that the condition $p_+p_-\neq 0$ in Lemma \ref{inversion_vu} is equivalent to $\lambda\not\in (-n+[-2a,-a+1]\cap \Z)\cup (-n+[-2b,-b+1]\cap \Z)$. Knowing that each $J_{n,\lambda}^{a,b}$ is surjective, the map $J^m_{n,\lambda}$ is  surjective. However, it is also injective by definition of the notion of $m$-th band.
\end{proof}
This result shows that $\la\in \C\setminus -\nn^*$ is a resonance in the $m$-th band 
if and only if $\la+m$ is a resonance in the $0$-th band, and thus to study the full resonance set (except possibly at $-\nn^*$), it suffices to understand the $0$-th band of resonance for the action of ${\bf X}$ on each of the bundles $\mc{L}^{n}$. See Figure \ref{fig:lines} on page \pageref{fig:lines} for an illustration.

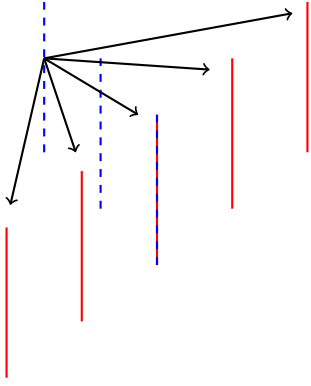
\begin{figure}[h!]
\begin{tikzpicture}
\draw[red, solid, thick] (-0.5,-3.5)--(-0.5,-1.5);
\draw[red, solid, thick] (0.5,-2.75)--(0.5,-0.75);
\draw[red, solid, thick] (1.5,-2)--(1.5,0);
\draw[red, solid, thick] (2.5,-1.25)--(2.5,0.75);
\draw[red, solid, thick] (3.5,-0.5)--(3.5,1.5);
\draw[blue, dashed, thick] (0,-.5)--(0,1.5);
\draw[blue, dashed, thick] (0.75,-1.25)--(0.75,0.75);
\draw[blue, dashed, thick] (1.5,-2)--(1.5,0);
\draw[->, thick](0,0.75)--(-0.45,-1.2);
\draw[->, thick](0,0.75)--(0.42,-0.5);
\draw[->, thick](0,0.75)--(1.25,0);
\draw[->, thick](0,0.75)--(2.2,0.6);
\draw[->, thick](0,0.75)--(3.3,1.35);
\end{tikzpicture}
\caption{\label{fig:lines}Schematic picture of the band structure. The dashed blue lines represent the bands $\sigma^0_{0}$, $\sigma^1_{0}$, $\sigma^2_{0}$. The red solid lines represent the bands $\sigma^0_{-2}$, $\sigma^0_{-1}$, $\sigma^0_{0}$, $\sigma^0_{1}$, $\sigma^0_{2}$. The arrows illustrate that the operators $J^3_{0,\lambda}$ from Corollary \ref{cor:isoJ2} relate $\sigma^2_{0}$ and $\bigcup_{a+b=2}\sigma^0_{a-b}$. }
\end{figure}

\subsection{Zeroth band resonant states and Laplacian eigensections}

Recall from \eqref{eq:injections} and \eqref{eq:tracefree} that for each $n\in \N_0$ the bundles $\L^{\pm n}$ inject $G$-equivariantly into the tensor bundle $\otimes_{S,0}^{n}\E$, so that for each $\lambda\in \C$ the spaces $\mathrm{Res}_{\pm n}(\lambda)$, $\mathrm{Res}^0_{\pm n}(\lambda)$ can be regarded as subspaces of $\D'(\M;\otimes_{S,0}^{n}\E)$, where $\M=S{\bf M}$.  Moreover, if $n\geq 1$, then by  \eqref{eq:tracefree} the direct sum $\L^{n}\oplus\L^{-n}$ is isomorphic to $\otimes_{S,0}^{n}\E$. 
We may then appeal to \cite[Thm.\ 6]{dfg} to obtain
\begin{prop}\label{PR_then_laplacian}
For each $n\in \N_0$, the pushforward $$\pi_\ast:\D'(S{\bf M};\otimes_S^{n}\E)\to \D'({\bf M};\otimes^{n}_ST^\ast {\bf M})$$ induced by fiber-wise integration in the sphere bundle $\pi:S{\bf M}\to {\bf M}$ restricts to
\bq
\pi_\ast: \mathrm{Res}^0_{n}(\lambda)\oplus \mathrm{Res}^0_{-n}(\lambda)\longrightarrow  \ker(\Delta_{n}-\mu_{n}(\lambda))\cap\ker \nabla^*\subset \Cinft({\bf M};\otimes^{{n}}_{S,0}T^*{\bf M})\label{eq:pistariso}
\eq
if $n> 0$, and if $n=0$, then it restricts to
\bq
\pi_\ast: \mathrm{Res}^0_{0}(\lambda)\longrightarrow  \ker(\Delta_{0}-\mu_{0}(\lambda))\subset \Cinft({\bf M}),\label{eq:pistariso0}
\eq
where $\Delta_n$ is the Bochner Laplacian acting on symmetric trace-free $n$-tensors on ${\bf M}$, $\nabla^*=-\mc{T}\circ \nabla$ is the divergence operator acting on such tensors, and $\mu_n(\lambda)=-\lambda(\lambda+2)+n$.

Moreover, if $\lambda\not\in-1-\frac{1}{2}\N_0$, then the maps  \eqref{eq:pistariso} and \eqref{eq:pistariso0} are bijective.
\end{prop}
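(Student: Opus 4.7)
The plan is to reduce the statement to the quantum--classical correspondence established in \cite[Thm.\ 6]{dfg} applied to the bundle $\otimes^n_{S,0}\E$ of trace-free symmetric tensors. The main work is to translate the 0-th band condition on a single line bundle $\L^n$ (formulated via the ladder operators $\eta_{\pm}$) into the first-band tensorial condition of \cite{dfg} (formulated via the horocyclic operator $\mc{U}_-$).

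First I would characterize $\mathrm{Res}^0_{\pm n}(\lambda)$ concretely. By Lemma~\ref{band_iff_etapm}, an element $u\in\D'(\M;\L^{\pm n})$ lies in $\mathrm{Res}^0_{\pm n}(\lambda)$ iff $(\Xbf+\lambda)u=0$ and $\eta_+u=\eta_-u=0$ (the wavefront condition being automatic by microlocal ellipticity). Embedding via $\mc{I}^n_{\pm n}$ from \eqref{eq:injections} and using the splitting $\otimes^n_{S,0}\E=\L^n\oplus\L^{-n}$ from \eqref{eq:tracefree}, a pair $(u_+,u_-)\in\mathrm{Res}^0_n(\lambda)\oplus\mathrm{Res}^0_{-n}(\lambda)$ corresponds to a section $\omega\in\D'(\M;\otimes^n_{S,0}\E)$ with $(\Xbf+\lambda)\omega=0$. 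I would then verify, by unraveling the coordinate description \eqref{eq:Uminuscoord} of $\mc{U}_-$ on $\otimes^n_S\E^*$ in the basis $\{v^\ast_{\pm}\}$ (and using $\mc{E}\simeq\mc{E}^*$), that $\mc{U}_-\omega=0$ holds on $\otimes^n_S\E^*$ iff $\eta_+u_\pm=\eta_-u_\pm=0$ for both components. The key point is that $\mc{U}_-$ only picks up derivatives of the coefficients $\lambda_K$, which under the embedding $\mc{I}^n_{\pm n}$ all coincide up to combinatorial constants with $\eta_\pm u_{\pm n}$; the vanishing of all $\eta_s\lambda_K$ simultaneously is then equivalent to $\mc{U}_-\omega=0$ on the nose (not only after symmetrization).

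Second, I would invoke \cite[Thm.\ 6]{dfg}, which asserts that for $\lambda$ outside the exceptional set $-1-\tfrac{1}{2}\N_0$, fiber integration $\pi_*$ provides a bijection
\[
\pi_*:\big\{\omega\in\D'(\M;\otimes^n_{S,0}\E)\,|\,(\Xbf+\lambda)\omega=0,\ \mc{U}_-\omega=0\big\}\xrightarrow{\ \sim\ } \ker(\Delta_n-\mu_n(\lambda))\cap\ker\nabla^*,
\]
where the eigenvalue identification $\mu_n(\lambda)=-\lambda(\lambda+2)+n$ comes from evaluating the Casimir $\Delta$ from \eqref{laplacien} on the appropriate $K$-isotypic component (the $1$ in $\mathrm{PSO}(1,3)$'s rank-one structure explains the $n$-shift). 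The pushforward $\pi_*\omega$ is a smooth section on ${\bf M}$ because the wavefront set of any resonant state is contained in $E_u^*$, which is transverse to the fibers of $\pi$, so that fiber-wise integration preserves regularity. The trace-free property is inherited from $\omega\in\otimes^n_{S,0}\E$, and the divergence-free property is a direct consequence of $\mc{U}_-\omega=0$ via the standard identity $\pi_*\circ\T\circ\mc{U}_-=-\nabla^*\circ\pi_*$ (an ${\bf M}$-derivative versus an $S{\bf M}$-derivative-and-trace) proved in \cite{dfg}. For $n=0$ the bundle $\otimes^n_{S,0}\E$ is the trivial line bundle, the second summand $\mathrm{Res}^0_{-n}$ is absent, and the divergence constraint is vacuous, giving \eqref{eq:pistariso0}.

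The main (and really only) nontrivial obstacle is the bookkeeping in step one: verifying that $\mc{U}_-\omega=0$ on $\otimes^n_S\E^*$ is equivalent to $\eta_\pm u_\pm=0$ on each line bundle component, rather than the seemingly weaker condition coming from the symmetrized operator $\S\mc{U}_-$. This is ensured because the tensors $\omega$ live in the trace-free part and because the coefficients $\lambda_K$ in \eqref{eq:Uminuscoord} are constrained by the symmetry of $\omega$, so $\mc{U}_-\omega=0$ without symmetrization actually forces each $\eta_s\lambda_K=0$. Once this is in place, everything else is a direct transfer of the Poisson transform bijectivity from \cite{dfg}; in particular the exceptional set $-1-\tfrac{1}{2}\N_0$ is exactly the one arising in that theorem from the zeros of the scalar coefficient in the intertwining operator between principal series representations of $\mathrm{PSO}(1,3)$.
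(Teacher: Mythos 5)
Your proposal is correct and follows essentially the same route as the paper: the paper's ``proof'' of this proposition is literally just the invocation of \cite[Thm.\ 6]{dfg}, with the nontrivial translation (that the 0-th band condition on $\L^{\pm n}$, defined via $\ker(\S\U_-)$ and hence via $\eta_\pm$, coincides with $\ker(\U_-)$ for the full unsymmetrized horocyclic operator on $\otimes^n_{S,0}\E$) already supplied by Lemmas \ref{band_iff_etapm}, \ref{xyab0} and \ref{band_iff_SU}, which you re-derive in your step one. One small caution on phrasing: for $n=0$ it is not that the summand $\mathrm{Res}^0_{-n}$ is ``absent'' but that it would coincide with $\mathrm{Res}^0_0$ and thus double-count, which is exactly why the proposition states the $n=0$ case separately; aside from that your account matches the paper's logic.
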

In addition, it is known (see \cite[Lemma 6.1]{dfg}) that for $n\geq 1$ the spectrum of $\Delta_n$ is bounded from below by $n+ 1$. Taking into account also Corollary \ref{cor:isoJ2} and writing  ${\rm Sp}_{\ker\nabla^*}(\Delta_n)$ for the spectrum of $\Delta_n$ acting on divergence-free tensors, we get
\begin{cor}\label{cor:individualgap}
For all $n\in \N_0$ and $m\in \N_0$, the set 
$(\sigma_n^m\cup\sigma_{-n}^m)\setminus (-1-\demi\N_0)$ is equal to 
\[
\big\{ -1-m\pm \sqrt{|n+m-2k|+1-\nu} \, |\, \nu\in {\rm Sp}_{\ker\nabla^*}(\Delta_{|n+m-2k|}),\;0\leq k\leq m \big\} \setminus (-1-\demi\N_0).
\] \end{cor}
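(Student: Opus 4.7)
The plan is to chain together the two main preparatory results: the band-reduction isomorphism of Corollary~\ref{cor:isoJ2}, which reduces the $m$-th band on $\L^{\pm n}$ to a sum of 0-th bands on neighbouring line bundles, and the Poisson-transform identification of Proposition~\ref{PR_then_laplacian}, which rewrites 0-th band resonant states on $\L^{\pm l}$ as divergence-free Laplacian eigensections of order $l$. Composing the two identifications and inverting the quadratic relation $\nu=\mu_l(\mu)=-\mu(\mu+2)+l$ will directly produce the claimed formula $\lambda=-1-m\pm\sqrt{l+1-\nu}$ with $l=|n+m-2k|$.

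Concretely, fix $\lambda\in \C\setminus(-1-\demi\N_0)$. Parametrising pairs $(a,b)$ with $a+b=m$ by $a=m-k$, $b=k$ for $0\leq k\leq m$, Corollary~\ref{cor:isoJ2} (after checking $\lambda\notin\A_m$, see below) gives
\[
\mathrm{Res}^{m}_{\pm n}(\lambda)\;\cong\; \bigoplus_{k=0}^{m}\mathrm{Res}^{0}_{\pm n+m-2k}(\lambda+m),
\]
so that $\lambda\in\sigma_{n}^{m}\cup\sigma_{-n}^{m}$ iff $\mathrm{Res}^{0}_{\varepsilon n+m-2k}(\lambda+m)\neq\{0\}$ for some $\varepsilon\in\{+,-\}$ and $k\in\{0,\dots,m\}$. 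Since $\lambda\notin-1-\demi\N_0$ implies $\lambda+m\notin-1-\demi\N_0$, Proposition~\ref{PR_then_laplacian} applies, and the pushforward $\pi_\ast$ identifies the pair $\mathrm{Res}^{0}_{l}(\lambda+m)\oplus\mathrm{Res}^{0}_{-l}(\lambda+m)$ (or just $\mathrm{Res}^{0}_{0}(\lambda+m)$ when $l=0$) with $\ker(\Delta_l-\mu_l(\lambda+m))\cap\ker\nabla^{\ast}$, where $l=|n+m-2k|$. The key observation here is that taking the union over $\varepsilon\in\{+,-\}$ collapses the two signs on the line-bundle index into a single condition on the absolute value $l$, so nontriviality of the resonant space is equivalent to $\mu_l(\lambda+m)\in\mathrm{Sp}_{\ker\nabla^{\ast}}(\Delta_l)$.

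Finally, solving the quadratic $-(\lambda+m)(\lambda+m+2)+l=\nu$ gives $\lambda+m+1=\pm\sqrt{l+1-\nu}$, yielding the two families $\lambda=-1-m\pm\sqrt{|n+m-2k|+1-\nu}$ exhausted as $\nu$ ranges over $\mathrm{Sp}_{\ker\nabla^{\ast}}(\Delta_{|n+m-2k|})$ and $k$ ranges over $\{0,\dots,m\}$, which is exactly the asserted set. The main bookkeeping obstacle is that Corollary~\ref{cor:isoJ2} demands $\lambda\notin\A_{m}$, a finite set of integers that is not entirely contained in $-1-\demi\N_0$ when $n$ is large; on this finite exceptional set, the isomorphism $J^{m}_{n,\lambda}$ may fail, but each component $J^{a,b}_{n,\lambda}$ remains \emph{surjective} provided $\lambda\notin\A_{a,b}$, and for each such $\lambda$ one can choose a suitable $(a,b)$-pair avoiding $\A_{a,b}$ to retain the implication in the non-trivial direction and then conclude by the symmetric role of $n$ and $-n$ (exchanging $a\leftrightarrow b$) together with the fact that both sides of the asserted equality are determined by the same quadratic.
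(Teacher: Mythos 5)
Your proposal takes the same route the paper implicitly uses: chain Corollary~\ref{cor:isoJ2} with Proposition~\ref{PR_then_laplacian} and invert the quadratic $\mu_l(\lambda+m)=\nu$, i.e.\ $(\lambda+m+1)^2=l+1-\nu$. That part is correct.

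The only issue is your last paragraph, on the exceptional set $\A_m$. The patch you sketch -- replacing the failed isomorphism $J^m_{n,\lambda}$ by a surjective $J^{a,b}_{n,\lambda}$ with a different $(a,b)$, then appealing to the symmetric role of $n$ and $-n$ -- does not actually avoid the obstruction: lifting $\mathrm{Res}^0_{n+m-2k}(\lambda+m)$ to $\mathrm{Res}^m_n(\lambda)$ forces $(a,b)=(m-k,k)$, while lifting $\mathrm{Res}^0_{-(n+m-2k)}(\lambda+m)$ to $\mathrm{Res}^m_{-n}(\lambda)$ forces $(a',b')=(k,m-k)$, and these produce the \emph{same} exceptional set, $\A_{m-k,k}\big|_{n}=\A_{k,m-k}\big|_{-n}$. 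The correct (and much simpler) resolution is that the concern is vacuous: $\A_m$ consists of integers, and $-1-\demi\N_0$ contains every integer $\leq -1$, so $\A_m\setminus(-1-\demi\N_0)\subset\N_0$. A non-negative integer $\lambda$ is never an $m$-th band resonance: for $\lambda>0$ by Remark~\ref{rem:resprop}(3), and for $\lambda=0$ because a band-$m$ resonance at $0$ with $m>0$ would give a nonzero resonant state at $m>0$ upon applying $(\S\U_-)^m$, contradiction, while for $m=0$, $n\neq 0$, Proposition~\ref{PR_then_laplacian} and the bound $\mathrm{Sp}(\Delta_n)\geq n+1$ give $\mathrm{Res}^0_n(0)=\{0\}$ (and $\A_0=\emptyset$ in any case). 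Likewise the right-hand side set has real part $\leq 0$ because $\nu\geq l+1$ for $l\geq 1$ and $\nu\geq 0$ for $l=0$, with equality only when $m=n=0$ (where $\A_0=\emptyset$). So on $\A_m\setminus(-1-\demi\N_0)$ both sides are empty and the equality holds trivially; the isomorphism of Corollary~\ref{cor:isoJ2} is only needed, and only invoked, where it is available.
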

To write this set in a more meaningful way, we need to distinguish two cases:
\begin{itemize}[leftmargin=*]
\item If $n+m\in 2\Z+1$ or $n+m\in 2\Z$ and $|n|> m$, then $|n+m-2k|\geq 1$ for $k\in \{0,\ldots,m\}$ and we can write the set  as
\[
\big\{ -1-m+ i\nu\, |\, \nu^2\in {\rm Sp}_{\ker\nabla^*}(\Delta_{|n+m-2k|}-|n+m-2k|-1),\;0\leq k\leq m \big\} \setminus (-1-\demi\N_0)
\] 
thus $\sigma_n^m\subset (-1-\demi \N_0) \cup( -m-1+i\R)$ and  
if $\Re\lambda > -1$, then 
$\mathrm{Res}^m_{n}(\lambda)=\{0\}$.

\item If $n+m\in 2\Z$ and $|n|\leq m$, then we can write the set  as
\begin{multline*}
\big(\big\{ -1-m+ i\nu\, |\, \nu^2\in {\rm Sp}_{\ker\nabla^*}(\Delta_{|n+m-2k|}-|n+m-2k|-1),\;0\leq k\leq m,\,2k\neq n+m   \big\}\\
\cup \big\{ -1-m\pm i\sqrt{\nu-1} \, |\, \nu\in {\rm Sp}(\Delta_{0}),\;\nu\geq 1\}\\
\cup \big\{ -1-m\pm \sqrt{1-\nu} \, |\, \nu\in {\rm Sp}(\Delta_{0}), \nu< 1\} \big) \setminus (-1-\demi\N_0)
\end{multline*}
thus $\sigma_n^m\subset (-1-\demi \N_0) \cup( -m-1+i\R)\cup [-m-2,-m]$.
\end{itemize}
In total, we see that 
\bq
\sigma^{\rm PR}_n\cap \{\lambda\in \C\,|\,\Re \lambda>-1\}=\begin{cases}\emptyset,\qquad &n\neq 0,\\ 
\{-1 + \sqrt{1-\nu}\, ;\,\nu\in {\rm Sp}(\Delta_{0}), \nu< 1 \}, & n=0.\end{cases}\label{eq:resonancesunion}
\eq
We remark that the union $\cup_{n\in\Z}\sigma_n^m$ of resonances in the $m$-th band is likely not a discrete subset of $-1-m+i\R$. To effectively prove this, it would suffice to prove that in a fixed interval
$[0,T]$, the number of eigenvalues in $[0,T]$ of $\Delta_{k}-k-1$ acting on divergence-free tensors tends to infinity as $k\to \infty$. This could probably be shown by semiclassical methods. This strongly suggests that one can \emph{not}  expect a meromorphic extension of the resolvent of the frame flow to the whole complex plane.

\bibliography{bibliography}
\bibliographystyle{amsalpha}
\end{document}